\documentclass{amsart}[11pt]

\usepackage[lmargin=1in,rmargin=1in,tmargin=1in,bmargin=1in]{geometry}
\usepackage{amsmath,amsthm,amsfonts,amssymb,verbatim}
\usepackage{graphicx}
\usepackage{tikz}
\usepackage{hyperref}
\usepackage{xcolor}
\usepackage{booktabs}
\usepackage{colortbl}
\usepackage[font=footnotesize,labelfont=bf]{caption}
\usepackage{mathtools}

\DeclareMathAlphabet{\mathpzc}{OT1}{pzc}{m}{it}

\newcommand{\Z}{\mathbb{Z}}
\newcommand{\Q}{\mathbb{Q}}
\newcommand{\R}{\mathbb{R}}
\newcommand{\F}{\mathbb{F}}
\newcommand{\C}{\mathbb{C}}

\newcommand{\RP}{\R P^1}
\newcommand{\sI}{\mathcal{I}}
\newcommand{\sF}{\mathcal{F}}
\newcommand{\sM}{\mathcal{M}}
\newcommand{\sU}{\mathcal{U}}
\newcommand{\sS}{\mathcal{S}}
\newcommand{\sR}{\mathcal{R}}
\newcommand{\sRhat}{\widehat{\mathcal{R}}}

\DeclareMathOperator{\coker}{coker}
\DeclareMathOperator{\rot}{rot}
\DeclareMathOperator{\wind}{wind}
\DeclareMathOperator{\Mor}{Mor}
\DeclareMathOperator{\rank}{rank}
\DeclareMathOperator{\Tor}{Tor}
\DeclareMathOperator{\Ord}{Ord}

\newcommand{\into}{\hookrightarrow}

\newcommand{\Ainfty}{\mathcal{A}_\infty}

\newcommand{\bchain}{{\bf b}}
\newcommand{\bchainhat}{\widehat{\bf b}}

\newcommand{\tildeT}{\widetilde{T}}
\newcommand{\barT}{\overline{T}}

\newcommand{\gr}{\operatorname{gr}}

\newcommand{\balpha}{{\boldsymbol{\alpha}}}
\newcommand{\bbeta}{{\boldsymbol{\beta}}}
\newcommand{\x}{{\bf x}}
\newcommand{\y}{{\bf y}}

\newcommand{\HFhat}{\widehat{\mathit{HF}}}

\newcommand{\HFminus}{\mathit{HF}^{-}}
\newcommand{\CFminus}{\mathit{CF}^-}

\newcommand{\CFKminus}{\mathit{CFK}^-}

\newcommand{\CFKr}{\mathit{CFK}_{\sR}}
\newcommand{\CFKrhat}{\mathit{CFK}_{\sRhat}}

\newcommand{\HFKminus}{\mathit{HFK}^-}

\newcommand{\HFKhat}{\widehat{\mathit{HFK}}}

\newcommand{\CFKhat}{\widehat{\mathit{CFK}}}

\DeclareMathOperator{\Sym}{Sym}

\newcommand{\spin}{\mathfrak{s}}

\newcommand{\allcurves}{\mathfrak{C}}
\newcommand{\allcurveshat}{\widehat{\mathfrak{C}}}
\newcommand{\complexes}{\mathfrak{D}}
\newcommand{\hatcomplexes}{\widehat{\mathfrak{D}}}

\newtheorem{theorem}{Theorem}

\newtheorem{proposition}[theorem]{Proposition}

\newtheorem{conjecture}[theorem]{Conjecture}

\newtheorem*{namedtheorem}{\theoremname}
\newcommand{\theoremname}{testing}

\theoremstyle{definition}
\newtheorem{definition}[theorem]{Definition}

\newtheorem{remark}[theorem]{Remark}
\newtheorem{example}[theorem]{Example}

\title[A curve-based survey of knot Floer homology]{A curve-based survey of knot Floer homology and concordance invariants}
\author[Jonathan Hanselman]{Jonathan Hanselman}
\address {Department of Mathematics, Indiana University.\newline \it{E-mail address:} \tt{jonahans@iu.edu}}

\begin{document}
\maketitle

\begin{abstract} 
Knot Floer homology associates to a knot in $S^3$ a bigraded chain complex over $\F[W,Z]$, from which many classical and concordance invariants can be extracted. Recent work shows that this algebraic object can equivalently be represented by a decorated immersed multicurve in a marked surface. This survey explains the immersed curve interpretation of knot Floer homology, aided by many examples, and shows how several invariants arising from the knot Floer complex can be extracted from the corresponding immersed curves. The decorated multicurve associated to a knot has a distinguished curve component $\gamma_0$ and a distinguished connected component $\Gamma_0$, both of which are concordance invariants of the knot. We pay particular attention to these components and various numerical concordance invariants that can be extracted from them. We introduce new generalizations of the $V_s$ invariants, and we give a new curve-based description of the $\Upsilon$ invariant by showing that it is determined by generalized $V_s$ invariants. \end{abstract}

\section{Introduction}\label{sec:intro}

Knot Floer homology, introduced by Ozs{\'a}th and Szab{\'o} \cite{OzSz:knots} and independently by Rasmussen \cite{Ras:knot-floer}, is a powerful invariant of knots. In its most algebraic form, the invariant associates to a knot $K$ in $S^3$ a bigraded chain complex $\CFKr(K)$ over the polynomial ring $\sR=\F[W,Z]$, well-defined up to bigraded chain homotopy equivalence. This complex contains a great deal of information about the knot. It categorifies the Alexander polynomial, it detects the Seifert genus and fiberedness, and it gives rise to concordance invariants such as $\tau$, $\epsilon$, $\nu$, $\nu^-$, and $\Upsilon_K$. This machinery has produced numerous applications in knot theory and low-dimensional topology, which we will not attempt to enumerate.

The richness of $\CFKr(K)$ is also a source of difficulty. Many invariants extracted from knot Floer homology are defined by first choosing convenient bases, considering filtered subcomplexes, passing to quotients or localizations, or studying induced maps on homology. These algebraic constructions are often effective, but they can obscure the geometric meaning of the information being used. In recent years, an alternative viewpoint has emerged: the knot Floer complex can be encoded by a decorated immersed curve in a marked surface. Under this correspondence, generators of the complex become intersection points of the immersed curve with a fixed reference curve, terms in the differential become immersed bigons, and many features of the complex become visible as geometric features of the curve.

The goal of this paper is to give a survey of this curve-based approach to knot Floer homology. We aim to make the immersed curve interpretation accessible to researchers who are familiar with Heegaard Floer homology or knot Floer homology but have not worked with immersed curves. We pay particular attention to concordance invariants; indeed, the paper is partly inspired by Hom’s survey of concordance invariants from knot Floer homology \cite{Hom:survey} and most of the invariants discussed there are revisited here, translated into the language of immersed curves whenever possible.

The basic dictionary is as follows. We consider decorated immersed curves in an infinite marked strip $\overline \sS_u$, defined as $[-\frac 1 2, \frac 1 2] \times \R$ with marked points at $(0, n+\frac 1 2)$ for integers $n$. A knot-like complex $C$ over $\sR$ can be represented by a decorated immersed multicurve $(\Gamma,\bchain)$ in $\overline \sS_u$, where $\Gamma$ is a collection of immersed curves and one immersed arc and $\bchain$ is a bounding chain. Loosely speaking, $\Gamma$ records the part of the complex visible after setting $WZ=0$, while the decoration $\bchain$ records additional information needed to recover the full complex over $\sR$ (more accurately, encoding the $WZ=0$ complex may require a small portion of the decoration $\bchain$, which we denote $\bchainhat$, equivalent to equipping the curves with local systems, but we will ignore this subtlety for now). Taking the Lagrangian Floer chain complex (in an appropriate sense) of $(\Gamma,\bchain)$ with a vertical reference line $\mu$ recovers the chain complex $C$. Conversely, starting from a suitable representative of a knot-like complex, one can build the curve by placing one point on $\mu$ for each generator and drawing arcs on either side of $\mu$ corresponding to $W$- and $Z$-terms in the differential. In favorable examples this construction immediately gives the desired curves; in general, one must allow local systems and bounding chains and apply an algorithm to simplify the resulting decorated curve.

One reason the curve picture is useful is that several important invariants become easy to see. The Seifert genus is the maximum height at which the curve meets $\mu$, and fiberedness is detected by the number of intersections at this maximal height. The Alexander polynomial is recovered from algebraic intersection numbers with horizontal segments of $\mu$. The multicurve $\Gamma$ has exactly one arc component, which we denote $\gamma_0$, and the invariants $\tau$, $\epsilon$, $\nu$, and $\phi_i$ can be read from this distinguished component of the curve: $\tau$ is the height at which $\gamma_0$ first intersects $\mu$, $\epsilon$ records the direction in which the curve turns after that initial intersection, and the $\phi_i$ count certain right arcs of specified length in $\gamma_0$. The torsion order $\Ord(K)$ measures the maximum length of a right arc in the full multicurve $\Gamma$. None of these invariants use the bounding chain decoration, as they all are determined by the $WZ=0$ simplification of the complex. More sophisticated invariants, such as $V_i$, $H_i$, $d$-invariants of surgeries, $Y_n$, and $\Upsilon_K$, can be defined from the full complex by considering certain subcomplexes and inclusion maps. In some cases these invariants can also be extracted geometrically from the decorated curves, though in these cases the geometric descriptions are less simple than the invariants mentioned earlier and in practice one often works with a mix of geometric and algebraic techniques.

A second reason the curve picture is useful is that it isolates the concordance-relevant part of the invariant. As noted above the multicurve $\Gamma$ arising from a knot-like complex has a distinguished component $\gamma_0$, the only arc component. When considering the multicurve $\Gamma$ with its decoration $\bchain$, which loosely speaking is a subset of the self-intersection points of $\Gamma$, we say that two curve components are connected if an intersection between them is contained in $\bchain$ and we define $\Gamma_0$ to be the connected component of $(\Gamma, \bchain)$ containing $\gamma_0$. This distinguished connected component corresponds to the irreducible direct summand of the complex that supports its (vertical or horizontal) homology, which is related to the local equivalence class or stable equivalence class of the complex. This distinguished connected component (and thus in particular the distinguished curve $\gamma_0$), is a concordance invariant. The fact that many of the numerical invariants listed above are concordance invariants corresponds to the fact that they only depend on the distinguished connected component $\Gamma_0$.

There are, however, important subtleties. In the $WZ=0$ setting, decorated immersed curves can be put in a clean normal form. As a result, the immersed multicurve $\Gamma$ associated with $\CFKr(K)$ is an invariant of $K$ up to homotopy of curves in the marked strip $\overline \sS_u$. If a decoration $\bchainhat$ is required in the $WZ=0$ setting, this too is uniquely determined by $K$. In the full minus setting over $\sR$, the bounding chain decoration is more delicate; different subsets $\bchain$ of self-intersection points of $\Gamma$ may give equivalent decorated curves, and there is not always a clear preferred representative for $\bchain$. Thus the decorated immersed curve corresponding to $\CFKr(K)$, which we denote $\Gamma(K)$, is an invariant only up to equivalence of decorated curves. The notion of equivalence here comes from the Fukaya category, but does not (as of now) have a nice combinatorial description. Thus when considering a fixed multicurve $\Gamma$ with two decorations $\bchain$ and $\bchain'$, it may not be immediate to determine if $(\Gamma, \bchain)$ and $(\Gamma, \bchain')$ are equivalent. Having said that, in practice this is not an issue in small examples. In many cases the decoration $\bchain$ is uniquely determined by the complex; in fact often it is forced by the multicurve itself (in these cases the full complex over $\sR$ is determined by its $WZ = 0$ simplification). In other cases there are multiple equivalent choices of $\bchain$ but one is obviously simpler and the preferred representative (for instance, one of them may be the trivial decoration that contains no self-intersection points). Each of these scenarios occurs in examples we provide in Section \ref{sec:examples}. At worst, a given multicurve has finitely many valid decorations---usually quite few---and different choices can often be checked for equivalence by ad hoc methods. Nevertheless, it would be satisfying to have a definitive normal form for the decoration $\bchain$---whether this is possible remains an open question.

\subsection{A first example}

Before introducing the general formalism, we indicate the curve construction in a simple case. Consider a knot $K$ whose knot Floer complex admits a basis that is both horizontally and vertically simplified. For each generator of the complex the associated curve $\Gamma(K)$ has one intersection with the vertical line $\mu$ through the marked points. The Alexander grading of a generator determines the height of the corresponding intersection point. A differential term $W^n y$ from a generator $x$ to a generator $y$ is represented by an arc on the right side of $\mu$ connecting the points corresponding to $x$ and $y$, while a term $Z^n y$ is represented by an arc on the left side. These arcs necessarily pass $n$ marked points in each case (as can be seen by considering Alexander gradings). Because our basis is horizontally and vertically simplified, this construction results in an immersed multicurve with one arc component whose ends are on $\mu$---we extend the ends of this arc component through $\mu$ to the boundaries of the strip. The resulting multicurve always represents the $WZ=0$ knot Floer complex of $K$. If we further assume that $\CFKr(K)$ is a staircase complex, as occurs for an L-space knot, or a staircase complex along with $1\times 1$ box summands (e.g. if $K$ is thin), then the curve actually captures the full complex over $\sR$, no bounding chain is needed (note that in these cases there are no diagonal arrows to capture even in the full complex). This simplified construction applies, for instance, to the right-hand trefoil; the corresponding complex and immersed curve are shown in Figure \ref{fig:first-example}(a,b).

\begin{figure}
\includegraphics[scale = 1]{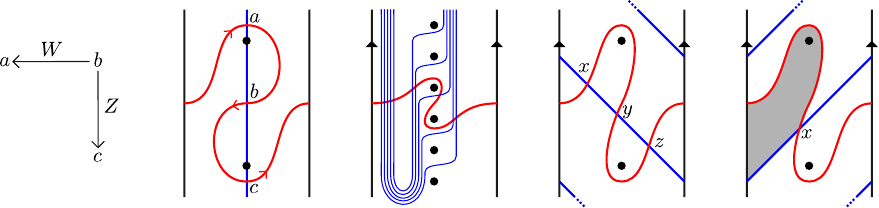}

(a) \hspace{24 mm} (b) \hspace{24 mm} (c) \hspace{24 mm} (d) \hspace{24 mm} (e) 
\caption{(a) The complex $\CFKr(K)$ for the right-hand trefoil; (b) The corresponding immersed curve $\Gamma(K)$; (c) $V_s$ counts marked points enclosed by the bigon between $\Gamma(K)$ and one of the blue curves shown, namely the one crossing the central vertical axis at height $s$; (d) $\HFminus(S^3_{-1}(K))$ is the homology of a complex with three generators and differential coming from the indicated bigons; (e) $\HFminus(S^3_{1}(K))$ has a single generator corresponding to the intersection point $x$, and the grading of this generator (and hence the $d$-invariant of this surgery) is determined by the shaded region. }
\label{fig:first-example}
\end{figure}

From this curve we can read off many invariants for the trefoil. The curve consists of only one component, an arc, which is thus $\gamma_0$. The height of the first intersection of $\gamma_0$ with $\mu$, which is also the maximum height attained by the curve, is 1 (here, and throughout the paper, we use height in a discrete sense: a point on $\mu$ between $(0, n-\frac 1 2)$ and $(0, n+\frac 1 2)$ is said to be at height $n$). It follows that $\tau(K) = g(K) = 1$. The maximum height is attained exactly once so $K$ is fibered. The curve turns downward after it first crosses $\mu$, so $\epsilon(K) = 1$, and $\nu(K) = 1 = \lceil \frac 1 2 \rceil$ because the curve first intersects $\mu$ near height $\frac 1 2$ when the curve is pulled tight. There is a single right arc of length one, oriented downward (when following $\gamma_0$ from left to right), which implies that $\phi_1(K) =1$, $\phi_i(K)=0$ for $i > 1$, and $\Ord(K) = 1$. We can recover the Alexander polynomial $\Delta_K(t) = t - 1 + t^{-1}$, where the coefficient of $t^k$ records the signed intersection of the curve with $\mu$ at height $k$. The invariants $V_s$ in this case record the number of marked points covered by the obvious bigons between $\gamma_0$ and the curves shown in Figure \ref{fig:first-example}(c), where for $V_s$ we use the curve that crosses the central vertical line $\mu$ at height $s$. We see from this picture that for the right-hand trefoil $V_{-s} = s$ for $s > 0$, $V_0 = 1$, and $V_s = 0$ for $s > 0$. The first $s$ for which $V_s = 0$ gives the invariant $\nu^-(K) = 1$. We can consider $(-1)$-surgery on $K$ by intersecting with a line of slope $-1$ instead of $\mu$ (see Figure \ref{fig:first-example}(d), which gives a Floer complex (over the single variable polynomial ring $\F[\sU]$) with three generators $x$, $y$, $z$ and differential $\partial(y) = \sU x + \sU z$ and $\partial(x) = \partial(z) = 0$; it follows that $\HFminus(S^3_{-1}(K)) \cong \F[\sU] \oplus \F$. For $(+1)$-surgery on $K$ we get a complex with a single generator and no differentials (see Figure \ref{fig:first-example}(e)), so $\HFminus(S^3_{+1}(K)) \cong \F[\sU]$.  The $d$-invariant of this surgery, which is a concordance invariant of $K$, is given by the $d$-invariant of the same surgery on the unknot (in this case, $0$) minus twice the number of marked points contained in the shaded region, so that $d( S^3_{1}(K) ) = -2$.

This example already illustrates the main philosophy of the paper. Algebraic data in $\CFKr(K)$ is converted into a curve in a marked surface, and many constructions involving the complex become questions about intersections, bigons, heights, and arcs. The rest of the paper develops this dictionary more fully and explains how to use it to recover both classical knot invariants and concordance invariants.

\subsection{Organization and reading guide}

Sections \ref{sec:CFK}-\ref{sec:correspondence} present the structural background behind the use of immersed curves in knot Floer homology by relating certain immersed curves to certain bigraded complexes.
\begin{itemize}
\item Section \ref{sec:CFK} reviews the algebraic formulation of knot Floer homology and the knot-like complexes that arise; readers familiar with knot Floer theory may skip this section but should refer to it for our notational conventions.
\item Section \ref{sec:curves} reviews the relevant category of decorated immersed curves and states a structure theorem for equivalence classes of these objects. Many details are omitted; even so, this section is the most technical, and readers may wish to skip some topics such as gradings (Section \ref{sec:gradings}) on a first reading. The train tracks discussed in Section \ref{sec:train-tracks} are only necessary to motivate the structure theorems stated in Section \ref{sec:structure-thm} and sketch their proof, but can be skipped if one primarily cares about examples and applications.
\item Section \ref{sec:correspondence} lays out the equivalence between the knot-like complexes defined in Section \ref{sec:CFK} and the knot-like curves defined in Section \ref{sec:curves}.
\end{itemize}
Section \ref{sec:examples} provides many explicit examples of knot-like complexes and corresponding knot-like curves. These examples are an important part of the exposition, and readers may find it helpful to consult them while reading Sections \ref{sec:CFK}-\ref{sec:correspondence}. Readers with some previous exposure to knot Floer immersed curves may wish to skip directly here and refer to Sections \ref{sec:CFK}-\ref{sec:correspondence} as needed.

Sections \ref{sec:simple-invariants}-\ref{sec:concordance-invariants} focus on extracting various knot invariants from immersed curves.

\begin{itemize}
\item Section \ref{sec:simple-invariants} highlights some simple invariants that can be easily extracted from the immersed curve associated to a knot without considering the decoration $\bchain$. These include classical invariants recovered by knot Floer homology such as the genus and the Alexander polynomial, as well as the invariants $\tau$, $\epsilon$, $\nu$, $\phi_i$, and the torsion order $\Ord(K)$. These invariants depend only on the curve and not on the more mysterious bounding chain decoration; as such this section serves as a good warm-up to interpreting immersed curve invariants. 

\item Section \ref{sec:subcomplexes} builds some language needed to define more sophisticated invariants. In particular, we discuss certain subcomplexes and inclusion maps that play a role in many of the invariants extracted from knot-like complexes and interpret them in terms of immersed curves. Section \ref{sec:surgery-formula} is an aside on the Dehn surgery formula, which also makes use of these inclusion maps; this section is not necessary for the rest of the paper (except when discussing the $d$-invariants of surgery in \ref{sec:invariants-from-v_s}). The standard inclusion maps $v_s$ are covered in Section \ref{sec:v_s-maps} and more obscure generalizations of them are introduced in Section \ref{sec:other-inclusion-maps}; while these generalizations are of some interest, we recommend skipping Section \ref{sec:other-inclusion-maps} if it seems intimidating.

\item Section \ref{sec:distinguished-components} defines the distinguished curve components $\gamma_0(K)$ and $\Gamma_0(K)$, which are concordance invariants, and discusses how (certain equivalence classes of) knot-like curves can be given a group structure.
\item Section \ref{sec:concordance-invariants} describes several invariants that can be extracted from the distinguished component $\Gamma_0(K)$, which in particular makes them concordance invariants. In addition to $\tau$, $\epsilon$, $\nu$, $\phi_i$ which were introduced earlier, these include $\nu^-$, $V_i$ and $H_i$, $d$-invariants of surgeries, $Y_n$, and $\Upsilon_K$.
\end{itemize}

\subsection{Topics not covered}

Knot Floer homology has led to a remarkable range of developments in the more than two decades since it was introduced, and even with the focus on the role of immersed curves in knot Floer theory it would be impossible to address all relevant topics in this survey. We briefly mention a few topics we have excluded. First, we emphasize that we restrict to knots and say nothing about links. This is more or less required by the stated focus of the article; while knot Floer homology admits versions for links, there is not (yet) an immersed curve interpretation of these invariants. We also restrict to knots in $S^3$ for simplicity. Knot Floer homology can be defined for knots in arbitrary 3-manifolds, and with a little care the immersed curve perspective extends to knots in a broader class of manifolds, but restricting to $S^3$ simplifies the discussion significantly without losing many of the key ideas, and this is the case of interest for many applications anyway.

We only discuss invariants determined by the knot Floer complex $\CFKr(K)$. There are many knot invariants related to Heegaard Floer homology that are not determined by the knot Floer complex alone. Some of these rely on enhancements of the knot Floer complex, such as invariants from involutive knot Floer homology \cite{HendricksManolescu:involutive, HendricksHom:involutive-note}, while other invariants depend on Heegaard Floer invariants of closed 3-manifolds associated to a knot (but not by surgery), such as the $d$-invariants of branched double covers \cite{ManolescuOwens}. It is an interesting question whether these invariants, particularly those of the first type, admit interpretations using immersed curves; for example, it may be possible to add extra decorations to the curves discussed here to encode the involutive knot Floer complex, but this has not yet been done and we will not consider it here.

Another omission is any discussion of satellite operations. There have been several notable applications of immersed curves to questions relating to satellite operations, owing to the fact that in many cases the immersed curve invariants for certain satellites can be geometrically obtained from the immersed curve invariants for the companion. For example, \cite{HW:cabling} gives an immersed curve interpretation of cabling operations and \cite{Chen2019} and \cite{ChenHanselman} apply immersed curves to satellite operations with $(1,1)$-patterns. Applications of these methods can be found in \cite{Shen2025NoteKnotFloer}, \cite{HomLidmanPark:cabling} and \cite{PatwardhanXiao:mazur}. We do not address satellite operations in this survey to keep the length manageable, and to avoid making any mention of bordered Heegaard Floer homology which often plays a role in satellite applications, but we encourage the reader to explore the references above.

\section{Knot Floer homology}\label{sec:CFK}

Knot Floer homology was introduced by Ozsv{\'a}th and Szab{\'o} in \cite{OzSz:knots}, and separately by Rasmussen in \cite{Ras:knot-floer}; other useful surveys of this material can be found in \cite{Manolescu:intro-to-knot-floer} and \cite{Hom:survey}. In this section we briefly review the structure of knot Floer homology in its algebraic form before turning our attention to immersed curves in Section \ref{sec:curves}. As notation for knot Floer homology has changed over the years, this section also serves to introduce the notation conventions we will use.

\subsection{The knot Floer complex}
Knot Floer homology associates to a knot $K$ in $S^3$ a finitely generated bigraded chain complex $\CFKr(K)$ over the ring $\sR = \F[W,Z]$, where $\F = \F_2$ is the field with two elements and $W$ and $Z$ are two formal variables. Note that we include the $\sR$ subscript in our notation to highlight our modern conventions, which differ from the original definition that only used one formal variable; see Remark \ref{rmk:WZ-notation}. This chain complex is a well-defined invariant of $K$ up to bigraded chain homotopy equivalence. The bigrading $\gr = (\gr_w, \gr_z)$ consists of two integer gradings and has the property that $\gr_w \equiv \gr_z \pmod 2$. Multiplication by $W$ and $Z$ have bidegrees $(-2, 0)$ and $(0, -2)$, respectively, and the differential $\partial$ on $\CFKr(K)$ has bidegree $(-1, -1)$. The grading $\gr_w$ is also called the \emph{Maslov grading} and is also denoted $M$. There is another useful grading $A$, defined by $\frac{\gr_w - \gr_z}{2}$, called the \emph{Alexander grading}. Since $\gr_w$ and $\gr_z$ agree modulo 2, $A$ is an integer grading. It is clear that $A$ is preserved by the differential, so the chain complex splits as a direct sum
$$\CFKr(K) = \bigoplus_{s \in \Z} \CFKr(K; s)$$
where $\CFKr(K; s)$ denotes the subcomplex with Alexander grading $s$. 

The construction of $\CFKr(K)$ makes use of a \emph{doubly pointed Heegaard diagram} representing the knot $K$, which consists of a genus $g$ surface $\Sigma$ decorated by two sets of $g$ disjoint and homologically linearly independent attaching circles $\balpha$ and $\bbeta$ and two basepoints $w$ and $z$ in the complement of $\balpha\cup\bbeta$ such that $(\Sigma, \balpha, \bbeta)$ is a Heegaard diagram for $S^3$ and connecting $w$ and $z$ by an arc in each handlebody that avoids the attaching disks determines the knot $K$. Given such a diagram, we consider the $g$-fold symmetric product $\Sym^g(\Sigma) = \Sigma \times \cdots \times \Sigma/S_g$ and construct half-dimensional subspaces $\mathbb{T}_\balpha $ and $\mathbb{T}_\bbeta$, which are respectively the product of all the curves in $\balpha$ and the product of all the curves in $\bbeta$. The symmetric product $\Sym^g(\Sigma)$ is a symplectic $(2g)$-manifold, and for suitable choice of symplectic structure the subspaces $\mathbb{T}_\balpha $ and $\mathbb{T}_\bbeta$ are Lagrangian \cite{Perutz:HFtori}. Then $\CFKr(K)$ is defined as the Lagrangian Floer chain complex of $\mathbb{T}_\balpha$ and $\mathbb{T}_\bbeta$: generators are intersection points in $\mathbb{T}_\balpha  \cap \mathbb{T}_\bbeta$, and the differential counts certain pseudo-holomorphic disks. The formal variables $W$ and $Z$ record how pseudo-holomorphic disks counted in the differential interact with the basepoints $w$ and $z$, respectively. More precisely, given generators $\x$ and $\y$ we let $\pi_2(\x, \y)$ denote the set of homotopy classes of disks connecting $\x$ to $\y$, and given $\phi \in \pi_2(\x, \y)$ we define $n_w(\phi)$ and $n_z(\phi)$ to be the algebraic intersection number of a representative of $\phi$ with $\{w\} \times \Sym^{g-1}(\Sigma)$ and $\{z\} \times \Sym^{g-1}(\Sigma)$, respectively. Letting $\sM(\phi)$ denote the moduli space of appropriate pseudo-holomorphic representatives of $\phi$ and $\widehat{\sM}(\phi)$ the quotient of $\sM(\phi)$ by its $\R$ action, and noting that $\widehat{\sM}(\phi)$ is compact and zero-dimensional when the Maslov index $\mu(\phi)$ is 1, the differential is defined by
$$\partial \x = \sum_{\y \in \mathbb{T}_\balpha  \cap \mathbb{T}_\bbeta} \sum_{\phi\in\pi_2(\x, \y),  \mu(\phi) = 1} \# \widehat{\sM}(\phi) W^{n_w(\phi)} Z^{n_z(\phi)} \y.$$ 
Any two generators $\x$ and $\y$ are connected by some homotopy class $\phi \in \pi_2(\x, \y)$ (though $\phi$ may not contribute to the differential). The gradings $\gr_w$ and $\gr_z$ are defined, as relative $\Z$-gradings, by the formulas
\begin{equation}\label{eq:relative-grading}
\gr_w(x) - \gr_w(y) = \mu(\phi) - 2n_w(\phi) \qquad \text{ and } \qquad \gr_z(x) - \gr_z(y) = \mu(\phi) - 2n_z(\phi).
\end{equation}
We can enhance $\gr_w$ and $\gr_z$ to absolute gradings with the convention that $\gr_w$ is 0 for the generator of the vertical complex (defined in the next paragraph) and $\gr_z$ is 0 for the generator of the horizontal complex.

It is sometimes convenient to simplify the knot Floer complex by passing to various quotients of the ring $\sR$. One important variation comes from setting the product $WZ$, which we will denote $\sU$, to zero; we let $\sRhat$ denote $\sR / (\sU = 0)$ and let $\CFKrhat(K)$ denote the resulting bigraded complex. Setting $\sU = 0$ corresponds to ignoring disks that cover both basepoints, while disks that cover either basepoint alone are allowed; this simplification may lose some information, but it is easier to compute\footnote{An effective algorithm based on methods from \cite{OzSz:algebras-with-matchings} has been implemented by Szab{\'o} and is now available through SnapPy \cite{SnapPy}, via the function \texttt{Link.knot\_floer\_homology()}.}. Further quotients come from setting $Z=0$, setting $W = 0$, or both; in the last case the resulting chain complex over $\F$ is denoted $\CFKhat(K)$ and its homology is $\HFKhat(K)$. Another simplification to the construction of $\CFKr(K)$ is forgetting a basepoint altogether, which algebraically corresponds to setting the relevant variable to 1. In particular, setting $Z=1$ (and thus $W = \sU$) in $\CFKr(K)$ defines a chain complex over $\F[W] = \F[\sU]$ that we call the \emph{vertical complex}. The vertical complex has a single $\Z$-grading induced by $\gr_w$; $\gr_z$ does not descend to a grading since multiplication by $Z$ acts nontrivially on $\gr_z$. Similarly, setting $W=1$ defines the \emph{horizontal complex} over $\F[Z] = \F[\sU]$ with grading $\gr_z$. Recall that $\CFKr(K)$ is defined using a doubly pointed Heegaard diagram for $K$ and removing either basepoint gives a pointed Heegaard diagram for $S^3$; it follows that both the vertical complex and the horizontal complex are chain homotopic to $\CFminus(S^3)$, and their homologies (called the \emph{vertical homology} and the \emph{horizontal homology}, respectively) are isomorphic to $\HFminus(S^3) \cong \F[\sU]$. In particular there is a unique generator of each homology, which is used to fix $\gr_w$ and $\gr_z$ as absolute gradings by declaring that the generators of vertical and horizontal homology have grading zero. Setting $W=0$ in the vertical complex defines the \emph{hat vertical complex} and setting $Z=0$ in the horizontal complex defines the \emph{hat horizontal complex}; both are graded complexes over $\F$ whose homology is isomorphic to $\HFhat(S^3) = \F$. The horizontal and vertical terminology is used because we often represent $\CFKr(K)$ in the plane such that $W^a Z^b$ times a generator appears at coordinates $(-a, -b)$ and differentials are represented by arrows that move leftward and/or downward (see Figure \ref{fig:trefoil-Heegaard-diagram}). In the vertical complex vertical arrows are treated as unlabeled arrows that can be cancelled in homology, and moreover in the hat vertical complex all non-vertical arrows are ignored.

\begin{remark}\label{rmk:WZ-notation}
Our notation differs from the original definition of the knot Floer complex in \cite{OzSz:knots} and \cite{Ras:knot-floer}, but has become more common in the literature. Note that each of the two formal variables induce a filtration on $\CFKminus_{\sR}(K)$ by negative powers of the variable. The original definition contained one formal variable $U$ (corresponding to our $W$) and an additional filtration; loosely speaking it is obtained from our notation by setting $Z=1$ but remembering the filtration that came from the $Z$ powers. We find the additional formal variable convenient and it highlights the symmetric behavior of the two filtrations. Another potential source of confusion is that the earliest appearances in the literature of the notation with two formal variables used $U$ and $V$ rather than $W$ and $Z$, but the present notation has the advantage that the variables match the corresponding basepoints. We use $\sU$ to denote the product $WZ$, using the script font to avoid confusion with $U$, the old notation for the current $W$.
\end{remark}

\begin{remark}
We are restricting to knots in $S^3$ and to $\F = \F_2$ coefficients for simplicity, but the knot Floer complex is defined for knots in arbitrary 3-manifolds and with more general coefficients. With more care the immersed curve interpretation of knot Floer invariants described later in this article can be extended to more general manifolds and to arbitrary field coefficients; see \cite{Hanselman:CFK}.
\end{remark}

\begin{figure}
\raisebox{8mm}{
\begin{tikzpicture}[scale = .8]
    \node (a) at (0,2) {$a$};
    \node (b) at (2,2) {$b$};
    \node (c) at (2,0) {$c$};

    \draw[->] (b) -- node[above] {$W$} (a);
    \draw[->] (b) -- node[right] {$Z$} (c);
\end{tikzpicture}}
\includegraphics[scale = .8]{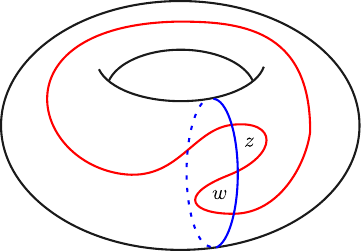}
\raisebox{-10mm}{\begin{tikzpicture}
    \tiny
    \node (a) at (.25, -.25) {$a$};
    \node (b) at (-.25,0) {$b$};
    \node (c) at (-.25,.25) {$c$};
    
    \node (Wa) at (-1.75, -.25) {$Wa$};
    \node (Wb) at (-2.25,0) {$Wb$};
    \node (Wc) at (-2.25,.25) {$Wc$};
    
    \node (WWa) at (-3.75, -.25) {$W^2a$};
    \node (WWb) at (-4.25,0) {$W^2b$};
    \node (WWc) at (-4.25,.25) {$W^2c$};
    
    \node (Za) at (.25, -2.25) {$Za$};
    \node (Zb) at (-.25,-2) {$Zb$};
    \node (Zc) at (-.25,-1.75) {$Zc$};
    
    \node (WZa) at (-1.75, -2.25) {$WZa$};
    \node (WZb) at (-2.25,-2) {$WZb$};
    \node (WZc) at (-2.25,-1.75) {$WZc$};
    
    \node (WWZa) at (-3.75, -2.25) {$W^2Za$};
    \node (WWZb) at (-4.25,-2) {$W^2Zb$};
    \node (WWZc) at (-4.25,-1.75) {$W^2Zc$};
    
    \node (ZZa) at (.25, -4.25) {$Z^2a$};
    \node (ZZb) at (-.25,-4) {$Z^2b$};
    \node (ZZc) at (-.25,-3.75) {$Z^2c$};
    
    \node (WZZa) at (-1.75, -4.25) {$WZ^2a$};
    \node (WZZb) at (-2.25,-4) {$WZ^2b$};
    \node (WZZc) at (-2.25,-3.75) {$WZ^2c$};
    
    \node (WWZZa) at (-3.75, -4.25) {$W^2Z^2a$};
    \node (WWZZb) at (-4.25,-4) {$W^2Z^2b$};
    \node (WWZZc) at (-4.25,-3.75) {$W^2Z^2c$};
    
    \large
    \node at (-2,-5) {$\vdots$};
    \node at (-5,-2) {$\cdots$};
    \node at (-5,-5) {$\cdot^{\cdot^{\cdot}}$};
    
    \draw[->] (b) -- (Wa);
    \draw[->] (Wb) --  (WWa);
    \draw[->] (Zb) -- (WZa);
    \draw[->] (WZb) -- (WWZa);
    \draw[->] (ZZb) -- (WZZa);
    \draw[->] (WZZb) -- (WWZZa);
    
    \draw[->] (WWb) --  (-5, -.15);
    \draw[->] (WWZb) --  (-5, -2.15);
    \draw[->] (WWZZb) --  (-5, -4.15);
 
    \draw[->] (b) --  (Zc);
    \draw[->] (Wb) --  (WZc);
    \draw[->] (WWb) --  (WWZc);
    \draw[->] (Zb) --  (ZZc);
    \draw[->] (WZb) --  (WZZc);
    \draw[->] (WWZb) --  (WWZZc);
    
    \draw[->] (ZZb) --  (-.25, -4.75);
    \draw[->] (WZZb) --  (-2.25, -4.75);
    \draw[->] (WWZZb) --  (-4.25, -4.75);
\end{tikzpicture}}
\caption{A doubly pointed Heegaard diagram for the right-hand trefoil. The resulting chain complex is indicated to the right, where intersection points in the diagram are labelled $a$, $b$, $c$ from top to bottom. The left shows a schematic for the complex as a finitely generated complex over $\sR$, and the right shows this complex viewed as an infinitely generated complex over $\F$.}\label{fig:trefoil-Heegaard-diagram}
\end{figure}
\begin{example}\label{ex:trefoil} A genus one doubly pointed Heegaard diagram for the right-hand trefoil is shown in Figure \ref{fig:trefoil-Heegaard-diagram}. The construction of $\CFKr(K)$ is easy to visualize in this case because the Heegaard diagram has genus one and we do not need to consider symmetric products. The complex is generated by intersection points of the single $\alpha$ curve and the single $\beta$ curve in the torus, and the differential counts bigons in the torus with one boundary on $\alpha$ and one boundary on $\beta$; in this setting we may count disks combinatorially and do not need to introduce a symplectic structure. There are three generators $a$, $b$, and $c$ and exactly two disks that contribute to the differential. One disk covers $w$ and the other covers $z$, and resulting differential gives
$$\partial(b) = Wa + Zc, \qquad \partial(a) = \partial(c) = 0.$$
If we declare that $\gr(a) = (0,-2)$, then \eqref{eq:relative-grading}  implies $\gr(b) = (-1,-1)$ and $\gr(c) = (-2, 0)$. Figure \ref{fig:trefoil-Heegaard-diagram} shows the complex  $\CFKr(K)$ represented in two ways. On the left we see the three generators of $\CFKr(K)$ with terms in the differential represented by arrows labelled by elements of $\sR$, while on the right the complex is viewed as being generated over $\F$ by terms of the form $W^a Z^b x$ for $x$ one of the generators over $\sR$.
\end{example}

While examples that do not require working with symmetric products are rare---knots admitting a genus one doubly pointed Heegaard diagram are a special class called (1,1) knots---it is instructive to have this example in mind when we consider immersed curves, which always live in the torus and for which Floer homology can always be computed combinatorially.

\subsection{Knot-like complexes} It will be helpful to define a class of bigraded chain complexes over $\sR$ (or $\sRhat$) which contains the knot Floer invariants described above.

\begin{definition}
A free finitely generated bigraded chain complex $C$ over $\sR$ is a \emph{knot-like complex} over $\sR$ if the horizontal and vertical homologies of $C$ are both isomorphic to $\F[\sU]$ and the generators of these homologies have $\gr_w = 0$ and $\gr_z = 0$, respectively. Similarly a free finitely generated bigraded chain complex $\widehat C$ over $\sRhat$ is a \emph{knot-like complex} over $\sRhat$ if the horizontal and vertical homologies are both $\F$ with the same grading condition. Let $\complexes$ and $\hatcomplexes$ denote the sets of knot-like complexes over $\sR$ and $\sRhat$.
\end{definition}

In addition to being a knot-like complex, the complex $\CFKr(K)$ for any knot $K$ in $S^3$ satisfies a symmetry condition: it is preserved, up to chain homotopy, by switching the roles $W$ and $Z$. We call a knot-like complex with this additional constraint a \emph{symmetric knot-like complex}.
Although knot Floer complexes are symmetric, the symmetry is not essential to most of the invariants discussed in this article so we will not assume this added constraint unless otherwise mentioned.

The sets of knot-like complexes $\complexes$ and $\hatcomplexes$ form categories, where morphisms are chain maps of graded $\sR$ or $\sRhat$ complexes. Generators (over $\sR$ or $\sRhat$) of $\Mor_{\sR}(C, C')$ or $\Mor_{\sRhat}(C, C')$ are elementary morphisms $(x_i \to y_j)$ that take a generator $x_i$ of $C$ to a generator $y_j$ of $C'$, and the bigrading of the generator $(x_i \to y_j)$ is $\gr(y_j) - \gr(x_i)$. Morphisms can be multiplied by elements of the coefficient ring, giving a module action, and the differential is determined by the differential on $C$ and $C'$.  We also have that $\Mor(C, C') \cong C^\vee \otimes C'$ where $C^\vee$ denotes the dual of $C$ in which gradings are negated and the direction of differential arrows is reversed.

\section{Decorated immersed curves}\label{sec:curves}

The knot Floer invariants described in the preceding section admit another interpretation in terms of decorated immersed curves. This stems from a structure theorem for knot-like complexes which says that (chain homotopy equivalence classes of) such complexes are equivalent to (homotopy classes) of decorated collections of immersed curves in a cover of a marked torus. In this section we introduce the relevant class of immersed curve objects, and the following section sketches their relationship to knot-like complexes.

\subsection{Curves in marked surfaces} We will work in the torus $T = T^2$, which we identify with $(\R/\Z)^2$, with either one or two marked points, as well as certain covers of $T$. Let $T_u$ denote the torus with a single marked point $u$ at the point $(0,\tfrac 1 2)$, and let $T_{w,z}$ denote the torus with marked points $w$ and $z$ at points $(\epsilon, \tfrac 1 2)$ and $(-\epsilon, \tfrac 1 2)$ respectively for some small $\epsilon$. We will primarily work in the $\Z$-fold cover of $T$ that is naturally identified with the infinite cylinder $(\R/\Z)\times \R$, which we denote $\barT$. Let $\barT_u$ and $\barT_{w,z}$ denote this cover equipped with the lifts of the marked point(s) in $T_u$ or $T_{w,z}$. Occasionally it is convenient to work in the universal cover $\tildeT \cong \R^2$ of $T$, denoted $\tildeT_u$ or $\tildeT_{w,z}$ when equipped with appropriate marked points.

We will consider compact immersed curves $\gamma$ in $\barT_u$, that is, immersions of $S^1$ into $\barT$ that avoid the lifts of $u$ (which occur at the points $(0,n+\tfrac 1 2)$ for integers $n$). More precisely we consider homotopy classes of such curves, where homotopies do not pass through lifts of $u$. We remark that any homotopy class of curve $\gamma$ in $\barT_u$ determines a homotopy class of curve in $\barT_{w,z}$ by first homotoping $\gamma$ to ensure it avoids an $\epsilon$ neighborhood of lifts of $u$ before replacing lifts of $u$ with lifts of $w$ and $z$. Conversely a curve in $\barT_{w,z}$ may be viewed as a curve in $\barT_u$ provided it does not cross any of the arcs from $(-\epsilon, n+\frac 1 2)$ to $(\epsilon, n + \frac 1 2)$ for integers $n$. We will ultimately be interested in collections of one or more curves $\Gamma = \{\gamma_0, \gamma_1, \ldots, \gamma_n\}$, where each $\gamma_i$ is a (homotopy class of) immersed curves in $\barT_u$ or $\barT_{w,z}$ as defined above; we will call such a collection an \emph{immersed multicurve} in $\barT_u$ or $\barT_{w,z}$.

At times we need to project curves (in either $\barT_u$ or $\barT_{w,z}$) to the torus by the projection map $p: \barT \to T$. In principle this projection loses information, but this information (a choice of lift of the curve in $T$ to $\barT$) is retained if curves are decorated with appropriate grading information (see Section \ref{sec:gradings}), so we may choose to work in either $T$ or $\barT$. A helpful philosophy is that it is most natural to view curves as living in $\barT$, where the grading information is visible geometrically, but pairing statements are cleanest in $T$ so we generally consider projections of curves to $T$ when pairing. It may also be convenient to work in the (marked) universal cover $\tildeT_u$ or $\tildeT_{w,z}$ and consider the preimage of curves under the projection map $\overline p: \tildeT \to \barT$, but this is primarily for the sake of clearer pictures.

Cutting the infinite strip $\barT \cong (\R/\Z) \times \R$ open along the line $\mu_{\frac 1 2} = \{\frac 1 2\} \times \R$ produces an infinite strip $[-\frac 1 2, \frac 1 2]\times \R$, which we will denote $\overline \sS$, and similarly cutting $T$ open along $\{\frac 1 2\} \times (\R/\Z)$ produces an annulus $\sS$. Both $\sS$ and $\overline \sS$ have two boundary components, which we refer to as the left and right boundaries. Let $\sS_u$, $\sS_{w,z}$, $\overline \sS_u$, and $\overline \sS_{w,z}$ denote versions of the annulus and infinite strip with appropriate marked points. We will also consider compact immersed curves in these surfaces, where we now allow both immersed $S^1$ components and immersed arcs with endpoints on the boundary of $\sS$ or $\overline \sS$. Collections of such curves/arcs are immersed multicurves in the appropriate marked surface. An immersed multicurve in $\barT_u$ clearly determines an immersed multicurve in $\overline \sS_u$, but the converse is not always true since we at least need the same number of arc endpoints on the left and right boundaries of $\overline \sS_u$, and if there are many endpoints on each side then gluing ends in different ways can produce different multicurves in $\barT_u$. That said, we will ultimately restrict to immersed multicurves in $\barT_u$ that intersect $\mu_{\frac1 2}$ exactly once and to multicurves in $\overline \sS_u$ that have exactly one arc component with one endpoint on each boundary of $\overline \sS_u$, which are clearly equivalent. In light of this, and by abuse of notation, we will sometimes drift freely between multicurves in $\barT_u$ and $\overline \sS_u$. The distinction is mainly relevant when we discuss morphisms in Section \ref{sec:morphisms}\footnote{The distinction is also relevant when considering knots in manifolds other than $S^3$, when curves in $\barT_u$ may cross $\mu_{\frac 1 2}$ more than once and information is lost by cutting the cylinder open, but we will not consider this case here.}.

At times we will work instead in the punctured surfaces obtained from $T_u$, $\barT_u$, $\sS_u$ or $\sS_u$ by removing the marked point(s) $u$, which we denote $T_\bullet$, $\barT_\bullet$, $\sS_\bullet$, and $\overline \sS_\bullet$. In general passing from marked surfaces to punctured surfaces will correspond to setting $\sU = 0$ and working over $\sRhat$ rather than over $\sR$. A homotopy class of curves in $\barT_\bullet$ is clearly equivalent a homotopy class of curves in $\barT_u$, the difference will come when we count bigons which may cover marked points but may not cover punctures.

\subsection{Floer complexes} Let $\Sigma_u$ denote either $\sS_u$, $\overline \sS_u$, $T_u$, or $\barT_u$ (only these will be relevant to us, but more generally $\Sigma_u$ could be any surface with one or more $u$ marked points). Given two immersed curves $\gamma$ and $\gamma'$ in $\Sigma_u$ that intersect transversely, we define a vector space $CF(\gamma, \gamma')$ generated over $\F[\sU]$ by points in $\gamma\cap\gamma'$. We further equip $CF(\gamma, \gamma')$ with a linear map $\partial$ that counts immersed bigons. Given intersection points $x$ and $y$, an immersed bigon from $x$ to $y$ is a map of the unit disk $D^2 \subset \C$ into $T$ that takes  $-i$ to $x$, $i$ to $y$, the positive real part half of $\partial D^2$ to $\gamma$ and the negative real part half of $\partial D^2$ to $\gamma'$ that is an immersion except at $\pm i$ and for which the image of a small neighborhood of either $-i$ or $i$ covers exactly one quadrant of a neighborhood of $x$ or $y$. Let $\sM(x, y)$ denote the set of equivalence classes of immersed bigons from $x$ to $y$, where bigons are considered equivalent if they differ by precomposing with a self-homeomorphism of $D^2$. One can check that in the settings we will consider $\sM(x,y)$ is finite for each pair of generators. Each equivalence class $B$ of bigons is counted with weight $\sU^{n_u(B)}$, where $n_u(B)$ is the multiplicity with which (any representative of) $B$ covers the basepoint $u$. The map $\partial$ is defined by
$$\partial x = \sum_{y \in \gamma \cap \gamma'} \sum_{B \in \sM(x,y)} \sU^{n_u(B)} y.$$
In favorable circumstances the map $\partial$ is a differential, making $CF(\gamma, \gamma')$ a chain complex, but this is not true for all pairs of immersed curves $(\gamma, \gamma')$. What can be said is that for $k \ge 0$ and any transverse collection of curves $(\gamma_0, \gamma_1, \ldots \gamma_k)$ there are operations
$$m_k: CF(\gamma_0, \gamma_1) \otimes \cdots \otimes CF(\gamma_{k-1}, \gamma_k) \to CF(\gamma_0, \gamma_k)$$
defined by counting immersed $(k+1)$-gons that these satisfy $\Ainfty$ relations. In other words, the set of immersed multicurves in $\Sigma_u$ have the structure of a curved $\Ainfty$-category; this is the Fukaya category of the surface. For each object $\Gamma$ the curvature is given by the map $m_0: \F \to CF(\Gamma, \Gamma)$ that counts immersed monogons with boundary on $\Gamma$. The curvature is determined by $m_0(1)$, which roughly speaking is a collection of self-intersection points of $\Gamma$. Note that $m_1$ is simply the map $\partial$ defined above. The relations imply that $m_0$ is an obstruction to $m_1$ squaring to 0. We say that an immersed curve is \emph{unobstructed} if $m_0(1)$ vanishes, and for any pair of unobstructed curves $\partial^2 = 0$ and $CF(\gamma, \gamma')$ is a chain complex over $\F[\sU]$. We emphasize that we do not forbid monogons, we only require that monogons with a given corner cancel in pairs. For example, the figure-eight shaped curve in Example \ref{ex:first-examples} bounds two immersed monogons with the same corner, with each one covering a single marked point; since these monogons make canceling contributions to $m_0$, the curve is unobstructed.

We can also define a Floer complex for curves $\gamma$ and $\gamma'$ in a doubly marked surface $\Sigma_{w,z}$ such as $\sS_{w,z}$, $\overline \sS_{w,z}$, $T_{w,z}$, or $\barT_{w,z}$. This is defined analogously, using $\F[W,Z]$ coefficients and counting each equivalence class $B$ of immersed bigons with weight $W^{n_w(B)} Z^{n_z(B)}$, with $n_w$ and $n_z$ denoting the multiplicity with which $B$ covers the basepoint(s) $w$ and $z$.

Setting $\sU = 0$ gives a simplified Floer complex $\widehat{CF}(\gamma, \gamma')$, which is generated over $\F$ by $\gamma \cap \gamma'$. The differential, and more generally the $m_k$ maps, only count polygons with $n_u(B) = 0$; equivalently, we count immersed polygons in the appropriate punctured surface $\sS_\bullet$, $\overline \sS_\bullet$, $T_\bullet$ or $\barT_\bullet$, where the marked points $u$ have been removed.

The Floer complex extends to immersed multicurves as a direct sum over components: for multicurves $\Gamma$ and $\Gamma'$ we have
$$CF(\Gamma, \Gamma') = \bigoplus_{\gamma\in\Gamma, \gamma'\in\Gamma'} CF(\gamma, \gamma'),$$
and likewise for $\widehat{CF}(\Gamma, \Gamma')$.

\subsection{Gradings} \label{sec:gradings} 
The curves we consider are decorated with gradings. A grading (more specifically, a $\Z$-grading) on an immersed curve $\gamma$ in an unmarked surface is a map $\tau: \gamma \to \R$ that is a lift of the tangent slope map from $\gamma$ to $\RP \cong \R/\Z$. Defining the tangent slope map requires us to fix a trivialization of the tangent bundle of the surface; in all of the surfaces we consider there is an obvious trivialization coming from the identification of $\tildeT$ with $\R^2$, and we identify the set of slopes with $\R / \Z$ such that a horizontal line has slope 0 and rotating counterclockwise by $r$ radians gives a line slope $\frac r \pi$. Note that the grading function $\tau$ on a curve is determined by the curve up to an integer shift. For an immersed multicurve, the grading function is determined up to a separate integer shift on each component. We also remark that the mod 2 reduction of the grading determines (and is determined by) an orientation on the curve by requiring the resulting map from $\gamma$ to $\R/2\Z$ to be the oriented tangent vector map. Note that not all immersed curves admit a $\Z$-grading, since a lift of the tangent slope to $\R$ may not exist. Such a lift only exists if the net rotation of the tangent slope around the curve is $0$, and in this case we say the curve is \emph{$\Z$-gradable}. If a curve is not $\Z$-gradable, we may consider lifts of the tangent slope to $\R/N\Z$ for some $N$--if such a lift exists the curve is $\Z/N\Z$-gradable and the lift is a $\Z/N\Z$-grading. Note that every immersed curve is $\Z/2\Z$-gradable, with a $\Z/2\Z$-grading corresponding to a choice of orientation on the curve.

Gradings on immersed curves in the marked surface $\barT_u$ (or $\overline \sS_u$) are defined similarly, but we would like to modify the definition so that the grading change around a closed loop encodes the number of marked points enclosed in the loop. One way to do this is to make cuts from each marked point to one of the infinite ends of $\barT$ and to make the grading function discontinuous in a prescribed way when the curve crosses a cut. Specifically, the grading jumps up by 2 when the curve crosses a cut from the left side of the cut to the right side of the cut (we view cuts as oriented from the marked point to the infinite end of the cylinder $\barT$). If the curve crosses multiple cuts at once then the grading jump is 2 times the number of cuts crossed; for this to make sense we must ensure that finitely many cuts coincide at any given point. We will fix a concrete choice of cuts on $\barT_u$: for the marked point with height $y \in \Z + \frac 1 2$, the cut is the ray $\{ (0, t) \}$ where $t$ ranges from $y$ to $\infty$ if $y>0$ or from $y$ to $-\infty$ if $y < 0$. In particular, our grading functions for curves in $\barT_u$ jump only when they cross the line $\mu = \{0\}\times \R$, and when they cross from the left side of $\mu$ to the right side of $\mu$ between the marked points at height $n-\frac 1 2$ and $n + \frac 1 2$ the grading function increases by $2n$. Note that the grading function is still continuous away from $\mu$, and its mod 2 reduction is still the oriented tangent slope function. It is still true that the grading on each component of a multicurve is determined up to an overall shift by the curve. Once again a $\Z$-grading may not exist for a given immersed curve, and if it does we say the curve is $\Z$-gradable. Given a closed curve $\gamma$ we can consider $\rot(\gamma)$, the net counterclockwise rotation of the tangent slope along the loop $\gamma$ (in radians/$\pi$) and also the total winding number $\wind(\gamma)$ around the marked points which is equal to the signed number of times the curve crosses a grading cut. It is clear that $\gamma$ is $\Z$-gradable if and only if $\rot(\gamma) = 2\wind(\gamma)$.

Curves in the doubly marked cylinder $\barT_{w,z}$ (or $\overline \sS_{w,z}$), if they are suitably gradable, can be equipped with two different grading functions $\tau_w$ and $\tau_z$ from the curve to $\R$, with each defined the same way as the grading on $\barT_u$ but using only the $w$ or only the $z$ marked points, where the grading cuts used in $\barT_u$ are shifted over to lie on either $\{\epsilon\}\times \R$ or $\{-\epsilon\}\times \R$. A curve is $\Z$-bigradable if such lifts exist. A $\Z$-gradable curve in $\barT_u$ viewed as a curve in $\barT_{w,z}$ is always $\Z$-bigradable. In this case the two gradings agree outside of the strip $(-\epsilon, \epsilon) \times \R$ and they differ within that strip by an amount related to the height.

We define gradings on curves in $T_u$ (or similarly in $\sS_u$, $T_{w,z}$, or $\sS_{w,z}$) indirectly: a grading function on a multicurve $\Gamma$ in $T_u$ is a function $\tau: \Gamma \to \R$ such that $\tau \circ p$ is a grading function on some lift of $\Gamma$ to $\barT$. In other words, $\tau$ agrees mod 2 with the oriented tangent slope and is continuous except along the curve $\{0\} \times (\R/\Z)$ at which it jumps by even amounts, and there is a consistency condition on the magnitude of the jumps in a given component of $\Gamma$. Note that a grading on a curve in $T_u$ specifies a choice of lift of that curve to $\barT_u$, since a point at which the grading jumps by $2k$ must lift to height $k$.

If two curves $\gamma$ and $ \gamma'$ in $\barT_{u}$ are equipped with gradings $\tau$ and $\tau'$ and the curves intersect transversely away from the grading cuts, the Floer complex $CF(\gamma, \gamma')$ can also be equipped with an integer grading. For a generator corresponding to an intersection point $x$ in $\gamma \cap \gamma'$ the grading is defined to be
$$\gr(x) = \lfloor \tau'(x) - \tau(x) \rfloor$$
where $\lfloor \cdot \rfloor$ denotes the greatest integer function. Equivalently, the grading is $\tau'(x) - \tau(x) - \theta$ where $\theta$ is $\frac 1 \pi$ times the angle covered when turning counterclockwise from $\gamma$ to $\gamma'$. In other words, $\gr(x)$ records how much the grading function must jump if a path makes a left turn from $\gamma$ to $\gamma'$ at $x$. Note that if the gradings $\tau$ and $\tau'$ are only $\Z/N\Z$ gradings, they determine a $\Z/N\Z$ grading on $CF(\gamma, \gamma')$. In particular there is a $\Z/2\Z$ grading on $CF(\gamma, \gamma')$ determined only by the orientations on the curves corresponding to the sign of intersection points.

We can also consider gradings related to (transverse) self-intersection points of an immersed curve $\gamma$ in $\barT_u$. Any transverse self-intersection point $p$ can be viewed as an intersection point between the two segments of the restriction of $\gamma$ to a small neighborhood of $p$. Thus $p$ can be given two gradings, both defined as above but differing in which segment plays the role of $\gamma$ and which plays the role of $\gamma'$. It is not difficult to check that these two gradings sum to $-1$; indeed, if we imagine turning leftward from one segment to the other and then leftward from that segment back to the first, the grading function returns to the starting value but we have also rotated counterclockwise by $\pi$, so the sum of the two grading jumps must have been $-1$. In particular, exactly one of the two gradings at $p$ is even; this is the ordering of segments for which making a left turn from the first to the second respects the orientation on the curve. Let us write the two gradings as $2d$ and $-1-2d$ for some integer $d$, and we define the \emph{degree} of $p$, $\deg(p)$, to be this integer $d$. The degree records half the grading jump that occurs when a left turn is made at $p$ that is consistent with the orientations on $\gamma$. For self-intersection points of curves in $\barT_{w,z}$ we could define a bidegree $(d_w, d_z)$ analogously, but this will not be necessary. For curves in $\barT_{w,z}$ coming from curves in $\barT_u$, if we assume self-intersections occur outside the strip $(-\epsilon, \epsilon) \times \R$, we always have $d_w = d_z = d$ where $d$ is the degree of the corresponding curve in $\barT_u$.

\subsection{Bounding chains}\label{sec:bounding-chains}
Immersed curves alone are not sufficient to represent all knot-like complexes, we will need to enhance our immersed curves with an additional decoration called a \emph{bounding chain}. Bounding cochains are a standard feature in the construction of Fukaya categories and Floer cohomology in the immersed setting \cite{AkahoJoyce}, we use the term bounding chain since Heegaard Floer homology is set up with homological rather than cohomological conventions. Our treatment of bounding (co)chains is also greatly simplified from the general case given that we are restricting to curves in surfaces, and we give a more combinatorial description that we hope is more accessible but may be unsatisfying to symplectic geometers. Moreover, the description of bounding chains here is particularly brief and focuses on intuition rather than rigor; the reader is referred to  \cite[Section 3.3]{Hanselman:CFK} for more details. Looking ahead to the examples in Section \ref{sec:examples}, particularly Example \ref{ex:nontrivial-bchain}, will also help in understanding this section.

Given an immersed multicurve $\Gamma$, let $\sI_{\ge 0}$ denote the set of self-intersection points of $\Gamma$ that have nonnegative degree and let $\sI_0$ be the set of degree 0 self-intersection points; a bounding chain $\bchain$ on $\Gamma$ will be a particular subset of $\sI_{\ge 0}$, and $\widehat\bchain$ will be its restriction to $\sI_0$. If multicurves $\Gamma_i$ are each equipped with a subset $\bchain_i$ of the relevant set $\sI_{\ge 0}$, we can modify the $\Ainfty$ operations $m_k$ so that the boundaries of the polygons counted are allowed to make additional left turns at self-intersection points of curves that are in some $\bchain_i$. More precisely, we define modified maps $m_k^\bchain$ similar to $m_k$ except that they count, for all nonnegative integers $n$, immersed $(k+1+n)$-gons such that $k+1$ of the corners are at intersections in $\Gamma_i\cap\Gamma_{i+1}$ or $\Gamma_0\cap \Gamma_k$ as usual and the remaining corners are at elements of $\bchain_i$ for some $i$. We call such a polygon a \emph{generalized $(k+1)$-gon} and we call the additional corners \emph{false corners} of the polygon. We require that at false corners the boundary orientation is consistent with the curve orientation---that is, the orientations either agree both before and after the turn or they are opposite both before and after the turn. As in the definition of $m_k$, each polygon counted by $m_k^{\bchain}$ contributes with weight that, in addition to the term $\sU^{n_u(B)}$ recording the marked points covered by the polygon, also has a factor $\sU^{\deg(p)}$ for each false corner $p$. Although generalized polygons may have arbitrarily many false corners, under suitable hypotheses there are finitely many generalized $(k+1)$-gons with a given set of true corners so the maps $m_k^\bchain$ are well-defined. One can also check that the maps $m_k^\bchain$ still satisfy the $\Ainfty$-relations; in particular the map $m_0^\bchain$ that counts generalized monogons is an obstruction to $m_1^\bchain$ squaring to 0. We will be interested in collections of self-intersection points $\bchain$ for which this obstruction vanishes.
\begin{definition}
A \emph{bounding chain} on an immersed multicurve $\Gamma$ is a subset of the nonnegative degree self-intersection points $\sI_{\ge 0}$ of $\Gamma$ such that $m_0^\bchain = 0$. This constraint is known as the \emph{Maurer-Cartan equation}, we will also call it the \emph{no monogon condition}. If $\bchain$ satisfies this condition we say that the pair $(\Gamma, \bchain)$ is \emph{unobstructed}.
\end{definition} 
If two multicurves $\Gamma$ and $\Gamma'$ are decorated with bounding chains $\bchain$ and $\bchain'$ then the map $\partial = m_1^\bchain$ counting generalized bigons is a differential on $CF(\Gamma, \Gamma')$.

\begin{remark}
Recall that we are restricting to $\F = \F_2$ coefficients. For general field coefficients one considers linear combinations of points in $\sI_{\ge 0}$ rather than subsets of $\sI_{\ge 0}$, and coefficients in $\bchain$ of each false corner factor in to the weight of polygons. In this setting, in addition to self-intersection points $\bchain$ should include a basepoint on each curve component with nonzero coefficient. 
\end{remark}

\begin{remark}
A bounding chain $\bchain$ is more commonly defined as an element of $CF(\Gamma, \Gamma)$, the Floer chain complex of $\Gamma$ with (a suitable perturbation of) itself. The complex $CF(\Gamma, \Gamma)$ is generated over $\F[\sU]$ by intersections between the two copies of $\Gamma$. There are a pair of intersections for each self-intersection of $\Gamma$ and an extra pair of intersections for each closed curve in $\Gamma$ (due to the required perturbation) which can be viewed as occurring near a fixed basepoint of $\Gamma$, and for each pair one of the two intersections is ruled out from being included in $\bchain$ by a grading condition, which also determines the power of $\sU$ in the coefficient of the other intersection point. Thus an element of $CF(\Gamma, \Gamma)$ with the relevant grading condition can be encoded as an $\F$-linear combination of the set containing the self-intersection points of $\Gamma$ and a basepoint on each component of $\Gamma$.
\end{remark}

We can define bounding chains similarly in the $\sU = 0$ setting. Recall that working over $\sRhat$ corresponds to working in the punctured surface $\Sigma_\bullet$, or equivalently ignoring all immersed polygons with $n_u(B) \neq 0$ when defining the operations $m^b_k$. We may also exclude any self-intersection points in $\bchain$ with strictly positive degree, since polygons using these are weighted by a positive power of $\sU$. Thus a \emph{bounding chain mod $\sU$} is a subset $\bchainhat$ of $\sI_0$ for which the count $m_0^{\bchainhat}$ of generalized immersed monogons in $\barT_\bullet$ vanishes. For any bounding chain $\bchain$ the restriction $\bchainhat$ of $\bchain$ to $\sI_0$ is automatically a bounding chain mod $\sU$, but we remark there are bounding chains mod $\sU$ that do not come from a bounding chain in the minus setting (see, for instance, Example \ref{ex:nontrivial-local-system}). If $\Gamma$ is a curve in $\Sigma_\bullet$ and $\bchainhat$ is a bounding chain mod $\sU$ for $\Gamma$, we will either say that the pair $(\Gamma, \bchainhat)$ is \emph{unobstructed mod $\sU$} or that it is an \emph{unobstructed decorated curve in $\Sigma_\bullet$}.

\subsection{The immersed Fukaya category}\label{sec:fukaya} We may now consider an $\Ainfty$-category whose objects are unobstructed decorated immersed multicurves $(\Gamma, \bchain)$ in $\Sigma_u$, or unobstructed decorated immersed multicurves $(\Gamma, \bchainhat)$ in $\Sigma_\bullet$ if we wish to work mod $\sU$; we call this the immersed Fukaya category of the relevant marked surface and denote it $\mathcal{F}(\Sigma_u)$ or $\mathcal{F}(\Sigma_\bullet)$. Given two objects $(\Gamma, \bchain)$ and $(\Gamma', \bchain')$, the morphism complex is the Lagrangian Floer complex $CF(\Gamma, \bchain) , (\Gamma', \bchain') )$, with differential $\partial = m_1^\bchain$, and its homology is $HF(\Gamma, \bchain) , (\Gamma', \bchain') )$. If $\Sigma$ is $\sS$ or $\overline \sS$ we should be more precise about what Floer homology means for arc components of curves: In $\sS$ the correct setting is the wrapped Fukaya category of the annulus, where when defining the Floer complex $CF((\Gamma, \bchain) , (\Gamma', \bchain') )$ we wrap the ends of arc components of $\Gamma'$ along the boundary circles of $\sU$ following the boundary orientation while fixing the ends of arc components of $\Gamma$, and in the cover $\overline \sS$ we fix ends of arc components of $\Gamma$ and push the ends of arc components of $\Gamma'$ arbitrarily far downward on the left boundary of $\overline \sS$ and arbitrarily far upward on the right boundary of $\overline \sS$ (see for example Figure \ref{fig:morphisms}).

\begin{figure}
\includegraphics[scale = 1]{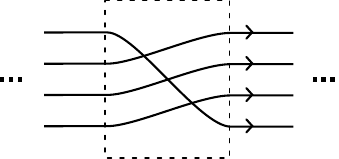}
\caption{The crossing region of a non-primitive curve, for which the strands run parallel outside of this region. The self-intersection points in this region are the local system self-intersection points.}
\label{fig:local-system-points}
\end{figure}

We will call two decorated curves \emph{Floer-equivalent} if they have isomorphic Floer homology when paired with every other object, following \cite[Definition 10.4]{Hanselman:CFK}; this is the notion of equivalence for decorated curves used throughout this paper. Note that quasi-isomorphic objects in the immersed Fukaya category are Floer-equivalent, but we do not need or assert the converse. It is helpful to, as much as possible, choose a preferred representative for each equivalence class of decorated immersed curves. In light of this, we will be able to restrict to decorated curves $(\Gamma, \bchainhat)$ in $\barT_\bullet$ for which the decoration $\bchainhat$ has a very limited form that we now describe. By applying a homotopy to the immersed curve $\Gamma$ if necessary, we will often assume that $\Gamma$ has a particularly nice form:
\begin{definition} An immersed multicurve $\Gamma$ in $\overline \sS_u$ (or in $\overline \sS_\bullet$) is in \emph{simple position} if:
\begin{itemize}
\item $\Gamma$ has transverse self-intersection points and no triple points,
\item $\Gamma$ intersects the vertical line $\mu = \{0\}\times \R$ minimally and orthogonally, with every component of $\Gamma$ intersecting $\mu$,
\item $\Gamma$ has minimal self-intersection, and
\item each non-primitive component of $\Gamma$ lies in a small neighborhood of an underlying primitive curve, and has the form of parallel copies of that curve outside of a small rectangle, in which the strands cross according to a cyclic permutation as shown in Figure \ref{fig:local-system-points}.
\end{itemize}

\end{definition}
In some settings we must modify this definition slightly because admissibility conditions (which we will not state precisely here) are needed to ensure the finiteness of polygon counts. One way to ensure admissibility is to forbid two components of $\Gamma$ from bounding an immersed annulus.

\begin{definition}\label{def:almost-simple-position}
An immersed multicurve $\Gamma$ in $\overline \sS_u$ is in \emph{almost simple position} if no two components bound an immersed annulus and it satisfies the conditions of simple position defined above except that the self-intersection of $\Gamma$ is only minimal subject to the no annuli constraint.
\end{definition}
See for instance the multicurve in Figure \ref{fig:local-system}, where the presence of two homotopic components requires the curves to be in non-minimal position to avoid an immersed annulus.

For curves in simple or almost simple position, the self-intersection points that occur in the crossing regions of any non-primitive components play a special role. These points, which always have degree 0, are called \emph{local system intersection points}. We say that $\bchainhat \subset \sI_0$ has \emph{local system type} if it is a subset of the local system intersection points of $\Gamma$. 

\begin{remark}\label{rmk:local-systems}
The significance of the terminology is the observation that a non-primitive curve $\gamma$ in $\overline \sS_\bullet$ decorated with a bounding chain $\bchainhat$ of local system type is equivalent to the underlying primitive curve decorated with a local system (see \cite[Section 6.2]{Hanselman:CFK} for more on this equivalence). Thus instead of considering decorated curves $(\Gamma, \bchainhat)$ with $\bchainhat$ of local system type, we could alternatively consider the category of unobstructed immersed curves in $\overline \sS_\bullet$ decorated with local systems; this aligns with the approach taken in \cite{HKK} and \cite{HRW}. From this perspective, the relevant objects in the minus setting would still be curves with bounding chains, but all curves would be primitive, they would be equipped with local systems, and in the bounding chains each self-intersection point would be labelled with a linear map between the vector spaces attached to the relevant curves.
\end{remark}

\subsection{Immersed train tracks} \label{sec:train-tracks}
One helpful way to interpret bounding chains is by passing from immersed curves to immersed train tracks, that is, immersed graphs such that at each vertex all incident edges are tangent. Including a self-intersection point $p$ in $\bchain$ can then be viewed as adding two new edges to the train track that, like exit ramps on a highway, allow a path to turn smoothly from one segment of the curve to the other near $p$ as in Figure \ref{fig:crossover-arrow}. Although paths are allowed to travel either direction along $\Gamma$, these new edges are directed so paths may only follow them in one direction. The new edges must be consistent with the orientation on $\Gamma$ (i.e. the directed path along the new edge must agree with the orientation at both ends or oppose the orientation at both ends) and are always directed so that following them corresponds to making a left turn. The new turning edges are also weighted by $\sU^{\deg p}$. From this perspective, the $\Ainfty$ operations are weighted counts of immersed $(k+1)$-gons, with no additional corners but for which each side of the polygon maps to a smooth path in the relevant train track. We say that a train track is unobstructed if the count $m_0$ of immersed monogons vanishes, and for two unobstructed train track counting immersed bigons defines a differential.

\begin{figure}
\includegraphics[scale = .8]{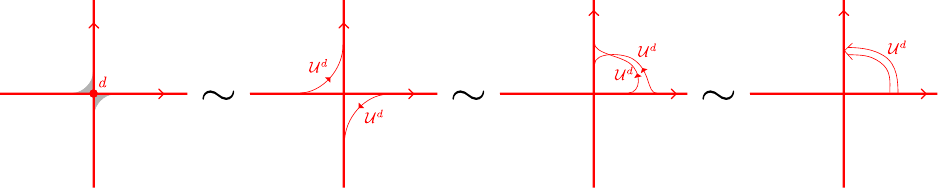}
\caption{An immersed curve with bounding chain $\bchain$ may be viewed as an immersed train track by adding two directed edges at each self-intersection point contained in $\bchain$, weighted by $\sU^d$ where $d$ is the degree of the self-intersection point. If both edges are placed in the same quadrant they cross once and are roughly parallel except for turning opposite directions at the ends. We use a bold arrow as a notational shorthand for such a pair of directed train track edges, and call this a crossover arrow.}
\label{fig:crossover-arrow}
\end{figure}

The pair of edges added at each point $p$ in $\bchain$ may be drawn in opposing quadrants or in the same quadrant as seen in Figure \ref{fig:crossover-arrow}. In the latter case they cross once and may be viewed as running roughly parallel to each other apart from turning opposite directions at the ends. Such a pair of arrows will be called a crossover arrow and is denoted by a bold arrow as a diagrammatic shorthand. We restrict to immersed train tracks that are obtained from immersed multicurves by adding crossover arrows. If all crossover arrows occur in a neighborhood of a self-intersection point and move counterclockwise in one quadrant as in Figure \ref{fig:crossover-arrow} then clearly the train track is equivalent to an immersed multicurve decorated with a bounding chain. We may also consider crossover arrows that do not lie near a self-intersection point, but we note that any crossover arrow can be made to look like those in Figure \ref{fig:crossover-arrow} after a homotopy of the immersed curve. We do place some restrictions on crossover arrows---for instance, we assume that crossover arrows are disjoint from each other---see \cite[Section 4.2]{Hanselman:CFK} for details.

\begin{figure}
\includegraphics[scale = .8]{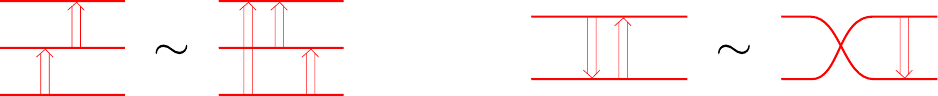}
\caption{Some local moves that preserve equivalence class of train tracks. Left: Crossover arrows may slide along immersed curves, but composite arrows must be added or removed when the head of an arrow slides past the tail of another. Right: A pair of opposite crossover arrows between the same sections of immersed curve may be replaced with one crossover arrow and one crossing.}
\label{fig:local-moves}
\end{figure}
The main advantage of train tracks and crossover arrows lies in visualizing equivalences between objects in the immersed Fukaya category, particularly when working mod $\sU$. For example, consider a decorated immersed curve $(\Gamma, \bchainhat)$ in $\barT_\bullet$ and the corresponding immersed train track. The curve $\Gamma$ along with the crossover arrows divide $\barT_\bullet$ into connected components, and if the component immediately to the left of a crossover arrow contains a puncture or is unbounded then removing that crossover arrow produces an equivalent object because no immersed polygon whose boundary follows that crossover arrow can contribute to any $m_k$ map. Moreover, in many situations sliding the ends of a crossover arrow along the immersed curve $\Gamma$ preserves Floer homological data when paired with all other train tracks and thus results in an equivalent object. Often a crossover arrow can be slid to a position where it is removable, giving a mechanism to replace $\bchain$ with a bounding chain $\bchain'$ containing fewer points such that $(\Gamma, \bchain')$ is equivalent to $(\Gamma, \bchain)$. In some situations sliding crossover arrows does not preserve the equivalence class of a train track, mainly when a slide causes multiple crossover arrows to interact, but certain more complicated local moves that do preserve the equivalence class of a train track are allowed. Some of these are shown in Figure \ref{fig:local-moves}: sliding the head of one crossover arrow past the tail of another crossover arrow requires adding a new crossover arrow obtained from composing the two, and a crossover arrow can slide past an opposing crossover arrow only at the expense of replacing it with a crossing (note that this move modifies the immersed curve $\Gamma$). A more thorough description of arrow-sliding rules for train tracks in punctured surfaces can be found in \cite[Section 3.4]{HRW} and \cite[Section 4.3]{Hanselman:CFK}.

\subsection{A structure theorem for decorated immersed curves in punctured surfaces} \label{sec:structure-thm}
Armed with the language of train tracks and the set of arrow sliding moves described above, one can seek to remove as many points from $\bchain$ as possible to provide the simplest representative for the equivalence class of $(\Gamma, \bchain)$. It turns out that when working mod $\sU$ almost all crossover arrows can be removed, and those that remain can be pushed to local-system self-intersection points. This gives rise to the following normal form for objects in the immersed Fukaya category.

\begin{theorem}\label{thm:curves-structure-theorem}
Let $\Sigma_\bullet$ be $\sS_\bullet$, $\overline \sS_\bullet$, $T_\bullet$, or $\barT_\bullet$. Any decorated immersed curve $(\Gamma, \bchainhat)$ in $\Sigma_\bullet$ is equivalent to a decorated curve $(\Gamma', \bchainhat')$ for which $\Gamma'$ is in (almost) simple position and $\bchainhat'$ has local system type. Moreover, such a representative for an equivalence class is unique in the following sense: if $(\Gamma', \bchainhat')$ and $(\Gamma'', \bchainhat'')$ are equivalent and both $\bchainhat'$ and $\bchainhat''$ are of local-system type then $\Gamma'$ is homotopic to $\Gamma''$ and $\bchainhat'$ and $\bchainhat''$ agree as subsets of self-intersection points (using the obvious identification of the local system self-intersection points of two homotopic curves in (almost) simple position).
\end{theorem}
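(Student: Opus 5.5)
The plan is to prove existence by an arrow-sliding algorithm on immersed train tracks, and uniqueness by recovering the normal form from Floer homology with suitable test curves. This mirrors the strategy of \cite{HRW} for punctured tori and of \cite[Sections 4--6]{Hanselman:CFK}.

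\textbf{Existence.} First convert $(\Gamma,\bchainhat)$ into an immersed train track by replacing each point of $\bchainhat$ with a crossover arrow, as in Section \ref{sec:train-tracks}. Working in $\Sigma_\bullet$, homotope $\Gamma$ so that it meets $\mu$ minimally and orthogonally, and slide every crossover arrow so that it lies near $\mu$. Cutting $\Sigma$ along $\mu$ leaves pieces that are disks or strips, each containing at most punctures on its boundary. In these pieces one can reorder the arrows using the local moves of Figure \ref{fig:local-moves}: when a head slides past a tail, add the composite arrow; when an opposing pair of arrows meets, trade it for a crossing.

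Give each arrow a complexity, say its length measured by the number of punctures it passes along $\mu$, and perform an induction on the total complexity. An arrow whose left-hand region contains a puncture or is unbounded can be deleted. Any other arrow can be slid along $\Gamma$ until it either becomes removable or runs around a closed component and returns to itself. The second case can only happen when several parallel strands of a curve are connected by arrows, which is exactly the situation of a non-primitive curve. Such arrows, together with crossings created by the opposing-pair move, are collected into the crossing rectangle of Figure \ref{fig:local-system-points}, where they become local system intersection points. A final homotopy achieves minimal self-intersection, or almost simple position when homotopic components would otherwise bound an annulus. This is legitimate because in $\Sigma_\bullet$, with only degree-0 decorations, homotopies avoiding the punctures preserve the Floer-equivalence class.

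\textbf{Uniqueness.} Suppose two representatives of local-system type are Floer-equivalent. By Remark \ref{rmk:local-systems}, each is a primitive multicurve carrying local systems. Then:
\begin{enumerate}
\item Pair with test objects: arcs from puncture to puncture, or to the boundary, and simple closed curves of every slope.
\item The ranks of $\widehat{HF}$ with such arcs compute minimal intersection numbers with $\Gamma$. Since curves in punctured surfaces are determined up to homotopy by these numbers, together with the decomposition into primitive curves plus multiplicity, $\Gamma'\simeq\Gamma''$.
\item To recover the local system, use the fact that $HF$ of a primitive curve with local system $(V,\Phi)$ against its own parallel push-off, or against a transverse closed curve carrying a test local system, sees $\ker(\Phi-\lambda)$. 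Over $\F_2$, choosing test local systems appropriately detects the conjugacy class of $\Phi$.
\item The local-system-type decorations in normal form are in bijection with these conjugacy classes (cyclic crossing pattern, with the bounding chain encoding companion-matrix-type data). Hence $\bchainhat'=\bchainhat''$.
\end{enumerate}

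\textbf{Main obstacle.} I expect termination of the existence algorithm to be the hardest step. Composite arrows created during slides can proliferate, so the complexity function must be chosen so that each composite arrow is strictly shorter than the arrows that produced it, or is immediately removable. The case of arrows cycling around non-primitive curves must also be shown to terminate in local-system form rather than looping indefinitely. I would first try lexicographic ordering by arrow length along $\mu$, following the proof in \cite{HRW}.
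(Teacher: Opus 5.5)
Your existence half follows the same route the paper takes by citing \cite{HRW} and \cite{KWZ}: crossover arrows plus the arrow-sliding algorithm. The real content there, namely termination and the handling of arrows that get stuck at crossings, is deferred to those papers, just as the paper defers it.

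\textbf{The gap: uniqueness step 2.} The paper only says uniqueness comes from ``carefully chosen test objects'', and your choice of test objects does not work. It is \emph{false} that an immersed multicurve in a punctured surface is determined up to homotopy by its intersection numbers, equivalently its $\widehat{HF}$ ranks, with simple arcs and simple closed curves. Here is a counterexample in $T_\bullet$.
\begin{enumerate}
\item The rotation $\iota(x,y)=(-x,-y)$ fixes the puncture $(0,\frac12)$ and acts by $-1$ on $H_1$.
\item Simple arcs and simple closed curves in the once-punctured torus are determined by their slopes, so $\iota$ carries each of your test objects $\delta$ to an isotopic one.
\item Hence $HF(\delta,\iota(\gamma))\cong HF(\iota^{-1}(\delta),\gamma)\cong HF(\delta,\gamma)$ for every immersed $\gamma$.
\item Take $a=\{y=0\}$ and $b=\{x=\frac12\}$ based at the fixed point $(\frac12,0)$. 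Then $\iota_*$ inverts both generators, so the curve representing $aba^2b^2$ goes to the curve representing $b^2a^2ba$ (reversed).
\item These two curves are not freely homotopic, even as unoriented curves.
\end{enumerate}
So nothing in your steps 1--2 distinguishes $\gamma$ from $\iota(\gamma)$.

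\textbf{How to repair it.} Floer-equivalence gives you isomorphisms against \emph{every} object, so use non-simple test objects. For each component, pair with its underlying primitive curve $\gamma_0$, equipped with test local systems $\Psi$.
\begin{itemize}
\item Components transverse to $\gamma_0$, and the double points of $\gamma_0$, contribute $\dim\Psi\cdot c(\Gamma)$ for a constant $c(\Gamma)$.
\item The components parallel to $\gamma_0$, with total local system $V$, contribute $2\dim\operatorname{Hom}_{\F_2[t^{\pm1}]}(\Psi,V)$.
\item Subtract the rank obtained from a module of the same dimension whose characteristic polynomial is coprime to everything in sight. This isolates the $\operatorname{Hom}$ term.
\end{itemize}
This shows $\Gamma'$ and $\Gamma''$ have the same underlying primitive curves. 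It also shows the total local systems along each are isomorphic, because the numbers $\dim\operatorname{Hom}(\F_2[t]/(p^k),V)$ recover the elementary divisors of $V$.

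\textbf{Smaller points in steps 3--4.}
\begin{itemize}
\item A transverse test curve never sees $\Phi$. In minimal position there are no bigons, so the rank is just $\#(\gamma\cap\delta)\cdot\dim V\cdot\dim W$.
\item Over $\F_2$ the only possible eigenvalue is $1$, so $\ker(\Phi-\lambda)$ is far too little information. You need the $\operatorname{Hom}$-dimensions above.
\item Multiplicity is not an invariant without an indecomposability convention. Over $\F_2$, the triple curve $\gamma^3$ with empty decoration has monodromy a $3$-cycle. Its local system is $\F_2[t]/(t^3+1)\cong\F_2[t]/(t+1)\oplus\F_2[t]/(t^2+t+1)$. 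So it is equivalent to $\gamma$ together with the double curve $\gamma^2$ decorated by its local-system point, whose monodromy has characteristic polynomial $t^2+t+1$.
\end{itemize}
In that last example both representatives are of local-system type, but the multicurves are not homotopic. So your step-4 bijection, and the uniqueness clause itself, must be read with each non-primitive component carrying an indecomposable local system, as in \cite{HKK}. With that convention, Krull--Schmidt applied to the total local systems recovered above finishes the proof.
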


We remark that the word \emph{almost} can be removed from the statement above, as immersed annuli between components of a multicurve do not end up causing problems in the hat setting, but we will need to work with curves in almost simple position to extend to the minus setting.

Theorem \ref{thm:curves-structure-theorem} is a special case (though using very different language) of a result for immersed Fukaya categories of arbitrary punctured surfaces \cite[Theorem 4.3]{HKK}. This structure theorem was also proved, using the more explicit train track algorithm sketched above, in the case of the punctured torus in \cite{HRW}, and that proof extends to arbitrary punctured surfaces as shown in \cite{KWZ} (the special case relevant for Theorem \ref{thm:curves-structure-theorem} is also considered in \cite{KWZ:mnemonic}).  The arrow sliding algorithm not only proves the existence of a representative of local system type but also gives a recipe for finding this representative. The uniqueness follows from pairing $(\Gamma', \bchainhat')$ and $(\Gamma'', \bchainhat'')$ with carefully chosen test objects. Note that the results in \cite{HKK}, \cite{HRW}, and \cite{KWZ} do not use the language of bounding chains but rather the language of curves decorated with local systems (see Remark \ref{rmk:local-systems}).

\begin{figure}
\includegraphics[scale = .8]{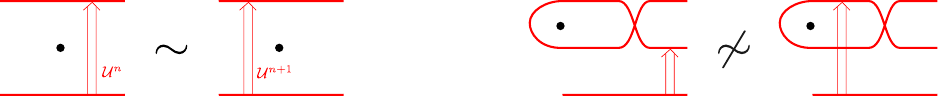}
\caption{Left: An allowed arrow slide in marked surfaces, in which an arrow slides past a marked point at the expense of modifying its weight by a factor of $\sU$. Right: Perhaps surprisingly, sliding a crossover arrow as shown does not result in an equivalent train track.}
\label{fig:minus-arrow-slides}
\end{figure}
One might hope to apply the same strategy to train tracks in marked surfaces $\Sigma_u$, but the situation is more subtle when we pass from punctures to marked points and work over $\sR$ rather than $\sRhat$. In particular, arrow sliding in marked surfaces leads to additional technical difficulties. Some useful arrow slide moves are allowed, for instance a crossover arrow may slide over a marked point on its left at the expense of multiplying the weight on the crossover arrow by $\sU$ (see the left side of Figure \ref{fig:minus-arrow-slides}), but other seemingly tame arrow slides do not result in equivalent train tracks (see the right side of Figure \ref{fig:minus-arrow-slides}). The difficulty ultimately arises from the presence of monogons, which can be assumed not to exist in punctured surfaces up to homotopy of the curves. These difficulties may not be insurmountable, but as of now there is no comprehensive arrow sliding calculus in the setting of marked surfaces.

Despite the additional difficulties, a partial structure theorem can be stated for decorated curves in $\overline \sS_u$, but rather than starting with an arbitrary decorated curve and simplifying it by a sequence of moves connecting equivalent objects the strategy relies essentially on the correspondence between curves and complexes described in the next section. We will revisit this in Section \ref{sec:minus-curves}.

\subsection{Knot-like curves} We will restrict to a subclass of decorated immersed curves in $\overline \sS_\bullet$ that contains those representing knot Floer invariants $\CFKrhat(K)$ for knots in $S^3$. We first require curves to be in almost simple position (Definition \ref{def:almost-simple-position}). In light of Theorem \ref{thm:curves-structure-theorem} we may restrict to decorated curves for which the bounding chain $\bchainhat$ has local system type. Furthermore, we will only consider multicurves with exactly one arc component and exactly one endpoint on each boundary of $\overline \sS_\bullet$. Finally, we will assume that the grading function on $\Gamma$ is normalized to be zero at the endpoints of the arc component. We remark that we could equivalently consider decorated curves in $\barT_\bullet$ that intersect the vertical line $\mu_{\frac 1 2}$ exactly once, and for which the grading function is zero at the intersection with $\mu_{\frac 1 2}$. While under these hypotheses curves in $\barT_\bullet$ and curves in $\overline \sS_\bullet$ are interchangeable, it is best to view our preferred class of curves as living in $\overline \sS_u$. We will also define a subclass of decorated immersed curves in $\overline \sS_u$ (or, equivalently, in $\barT_u$) that contains those representing knot Floer invariants $\CFKr(K)$ for knots in $S^3$; this class will be defined in terms of the mod $\sU$ reduction of the decorated curve, i.e. the same multicurve viewed as a curve in $\overline \sS_\bullet$ with the bounding chain $\bchain$ replaced by $\bchainhat = \bchain|_{\sI_0}$ .

\begin{definition}
A decorated immersed curve $(\Gamma, \bchainhat)$ in $\overline \sS_\bullet$ is a \emph{knot-like curve over $\sRhat$} if $\Gamma$ is in almost simple position, $\Gamma$ has exactly one arc component and intersects each boundary of the infinite strip once, the grading function evaluates to 0 at the endpoints of the arc component, and $\bchainhat$ has local system type. A decorated immersed curve $(\Gamma, \bchain)$ in $\overline \sS_u$ is a \emph{knot-like curve over $\sR$} if its mod $\sU$ reduction $(\Gamma, \bchainhat)$ is a knot-like curve over $\sRhat$. Let $\allcurves$ and $\allcurveshat$ denote the sets of knot-like curves over $\sR$ and $\sRhat$, respectively.
\end{definition}
 
Knot-like curves are called \emph{symmetric knot-like curves} if they are fixed, up to equivalence, by the rotation $(x,y) \to (-x,-y)$ in $\overline \sS_u$. The curves arising from knot Floer invariants will always be symmetric knot-like curves, but most of the discussion in this article does not require this symmetry.

The set $\allcurves$ (respectively $\allcurveshat$) of knot-like curves is a subcategory of the immersed Fukaya category of $\overline \sS_u$ (respectively of $\overline \sS_\bullet$). By abuse of notation we will use $\allcurves$ or $\allcurveshat$ both for the category and the set of objects. We are chiefly concerned with objects in $\allcurves$ or $\allcurveshat$ up to Floer equivalence.  Note that by Theorem \ref{thm:curves-structure-theorem} two elements of $\allcurveshat$ are only equivalent if the multicurves are homotopic and the bounding chains agree as subsets of self-intersection points. By considering the $\sU = 0$ restrictions, we see that equivalent knot-like curves in $\allcurves$ also have homotopic multicurves and identical restrictions of the bounding chain to $\sI_0$, but the equivalence for (the strictly positive degree portion of) $\bchain$ is more subtle.

At times it is necessary to project knot-like curves from $\overline \sS_u$ to $\sS_u$ by the projection map $p$; the projection of a curve in $\allcurves$, decorated with the projection of its grading function, will be called a knot-like curve in $\sS_u$ and the set of such curves will be denoted $p\allcurves$. In principle projecting loses information, but the grading function uniquely determines the correct lift of the curves to $\overline \sS_u$, so in fact projection gives a bijection from $\allcurves$ to $p\allcurves$. The curves in $p\allcurves$ are the objects of a category, which by abuse of notation we also call $p\allcurves$. By the above observation $\allcurves$ and $p\allcurves$ have the same objects, but the morphisms are different: given objects $C_1$ and $C_2$ in $\allcurves$, we must allow different vertical translations of $C_2$ relative to $C_1$ to recover all morphisms from $p(C_1)$ to $p(C_2)$; that is,
$$CF(p(C_1), p(C_2)) \cong \bigoplus_{s\in\Z} CF(C_1, C_2[s]),$$
where $C_2[s]$ is the result of translating $C_2$ up by $s$. The category $p\allcurveshat$ of knot-like curves in $\sS_\bullet$ is defined similarly.

\section{Relating knot-like complexes and knot-like curves}\label{sec:correspondence}

We are now ready to describe the relationship between knot-like complexes and knot-like curves. The following is a special case of the main result of \cite{Hanselman:CFK}:

\begin{theorem}\label{thm:bijection}
There are bijections
$$f: {\complexes}/{\sim} \to {\allcurves}/{\sim} \qquad \text{ and } \qquad \hat f: {\hatcomplexes}/{\sim} \to {\allcurveshat}/{\sim},$$
where on the left of each correspondence $\sim$ denotes chain homotopy equivalence and on the right $\sim$ denotes Floer-equivalence of decorated curves. Moreover, for any knot-like complexes $C_1$ and $C_2$ and any $s\in \Z$ there is an isomorphism
$$H_*(\Mor_\sR(C_1, C_2)_s) \cong HF(f(C_2), f(C_1)[s] ).$$
The direct sum over all Alexander gradings $s$ is obtained by applying the projection $p: \overline \sS_u \to \sS_u$, i.e.
$$H_*(\Mor_\sR(C_1, C_2)) \cong HF(p(f(C_2)), p(f(C_1))).$$
Analogous statements hold for $\hat f$ taking Floer homology in $\overline \sS_\bullet$ or $\sS_\bullet$ instead of $\overline \sS_u$ or $\sS_u$.
\end{theorem}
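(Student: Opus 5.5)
The plan is to build $f$ and $\hat f$ explicitly and check the morphism statement via a pairing computation, working first in the hat setting and then lifting to $\sR$.

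\textbf{Construction of $\hat f$.} Given $\widehat C\in\hatcomplexes$, pick an $\F$-basis of homogeneous generators. Place one point on $\mu$ for each generator, at the height given by its Alexander grading. For each nonzero coefficient $W^n$ (resp.\ $Z^n$) of the differential, draw an arc on the right (resp.\ left) of $\mu$ passing $n$ marked points. In general this yields an immersed train track, not a curve: a vertex has several incident edges on one side exactly when the basis is not horizontally or vertically simplified. We reinterpret the extra edges as crossover arrows at degree-$0$ self-intersections, giving a decorated curve $(\Gamma,\bchainhat)$ in $\overline\sS_\bullet$.

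We then check three things:
\begin{itemize}
\item $\partial^2=0$ is equivalent to the no-monogon condition, since immersed monogons in $\overline\sS_\bullet$ correspond to length-two composites in the differential.
\item A change of basis corresponds to an arrow-slide move.
\item The horizontal and vertical homologies being $\F$ forces exactly one arc component, with ends extending to the two boundaries. Here pairing with horizontal-type test arcs computes these homologies, and the grading normalization fixes the arc's grading to be $0$ at its ends.
\end{itemize}
Theorem~\ref{thm:curves-structure-theorem} then gives a unique local-system-type representative, so $\hat f$ is well defined on homotopy classes.

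\textbf{Inverse and the morphism isomorphism.} Conversely, given a knot-like curve, the complex $\widehat{CF}$ of the curve paired with $\mu$ (suitably wrapped) is a knot-like complex whose generators are $\Gamma\cap\mu$. Bigons on the two sides of $\mu$ give the $W$- and $Z$-terms, and false corners from $\bchainhat$ contribute the remaining terms. This inverts the construction up to homotopy, so $\hat f$ is a bijection.

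For the morphism isomorphism, isotope $p(f(C_1))$ by wrapping its ends, following the prescription for $\overline\sS$, so that near $\mu$ the two curves look like products. Intersection points of $f(C_2)$ with $f(C_1)[s]$ then pair generators $x_i$ of $C_1$ with $y_j$ of $C_2$ whose Alexander gradings differ by $s$. Generalized bigons sweep along the arcs encoding $\partial_{C_1}$ or $\partial_{C_2}$, which matches the differential on $C_1^\vee\otimes C_2\cong\Mor(C_1,C_2)$. Gradings match by the formula $\gr(x)=\lfloor\tau'-\tau\rfloor$ and the jump conventions on $\mu$. Summing over $s$ via $CF(p(C_1),p(C_2))\cong\bigoplus_s CF(C_1,C_2[s])$ gives the projected version.

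\textbf{Passing to $\sR$.} The main obstacle is the full minus setting, where no arrow-sliding calculus is available. I would use a filtration (perturbation) argument in powers of $\sU$:
\begin{enumerate}
\item Reduce $C$ mod $\sU$ and apply $\hat f$ to obtain $(\Gamma,\bchainhat)$.
\item Realize $C$ as a deformation of the reduced complex whose diagonal terms carry positive powers of $\sU$.
\item Encode each such diagonal term by including positive-degree self-intersection points in $\bchain$, with the degree equal to the $\sU$-power. Solve the Maurer--Cartan equation order by order, where the obstructions are exactly the $\partial^2=0$ relations of $C$.
\end{enumerate}
Well-definedness up to Floer equivalence is then not proved via moves. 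Instead, the pairing computation above, run over $\sR$, shows that the Floer homology of $f(C)$ against any test object equals a morphism homology determined by $C$ up to homotopy. Injectivity needs the converse: two decorated curves that are Floer-equivalent have homotopy-equivalent complexes. For this I would pair with $\mu$ together with its $\sU$-structure and use that a homotopy equivalence is detected by isomorphisms on $H_*\Mor(-,C')$ for all $C'$, a Yoneda-type argument. Controlling admissibility, so that generalized polygon counts are finite (via almost simple position), is the technical point I expect to require the most care.
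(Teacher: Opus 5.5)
Your overall architecture matches the paper's: a train track fed into Theorem~\ref{thm:curves-structure-theorem} for $\hat f$, pairing with $\mu$ as the inverse, adding positive-degree points to $\bchain$ one diagonal arrow at a time over $\sR$, and a chain-level perturbation for morphisms. However, the first step of your construction of $\hat f$ has a genuine gap. Drawing both sides of $\mu$ from one arbitrary basis gives vertices with several edges on a side, and you give no way to turn this into a multicurve with crossover arrows. A local ``reinterpretation'' of the extra edges is not available in general. A crossover arrow joins two strands of an honest multicurve, so each point of $\Gamma\cap\mu$ must already carry exactly one right and one left segment, and in an arbitrary basis no subset of edges is distinguished as extra. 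Making the right side a family of disjoint arcs requires a horizontally simplified basis, and making the left side one requires a vertically simplified basis. These are in general different bases: the complex of Example~\ref{ex:nontrivial-local-system} has no basis simplified on both sides. The missing idea is the paper's. Draw the right side in a horizontally simplified basis and the left side in a vertically simplified basis, and join them by horizontal strands carrying crossover arrows that encode the change-of-basis matrix. Only this object is a multicurve with crossover arrows to which the structure theorem applies. Your claim that $\partial^2=0$ is equivalent to the no-monogon condition also needs more, since $m_1^2$ on $CF(-,\mu)$ only sees generalized monogons that combine with $\mu$ into triangles.

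Second, in the morphism identification you match $x_i$ with $y_j$ only when their Alexander gradings differ by $s$. That is the wrong generating set. $\Mor_\sR(C_1,C_2)_s$ has one $\F[\sU]$-generator $W^aZ^b\,(x_i\to y_j)$ for \emph{every} pair $(x_i,y_j)$, with the power of $W$ or $Z$ absorbing the height difference. In the paper's picture both curves are horizontal in $[-\frac14,\frac14]\times\R$. After pushing $f(C_1)[s]$ down on the left and up on the right, each of its tilted segments crosses every horizontal segment of $f(C_2)$, and the height difference gives the power of $W$ or $Z$. With only same-height intersections, the chain-level match with $C_1^\vee\otimes C_2$ fails.

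In the $\sR$ step, note also that a self-intersection point realizing a given diagonal arrow need not be available in the current position. The paper adds a crossover arrow and slides it to such a point, possibly changing the basis of $C$, while ensuring earlier arrows are unaffected.

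Finally, injectivity needs no Yoneda argument, and yours does not work as stated. Floer-equivalence of $f(C_1)$ and $f(C_2)$, combined with the morphism isomorphism, only gives isomorphisms $H_*\Mor(C_1,X)\cong H_*\Mor(C_2,X)$ separately for each $X$, with no naturality, which Yoneda requires. The paper instead gets injectivity from $g\circ f=\mathrm{id}$ together with invariance of $CF(-,\mu)$ under equivalence. Your well-definedness argument also only covers test objects of the form $f(C')$, not every object as the definition of Floer-equivalence requires.
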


We remark that Theorem \ref{thm:bijection} suggests the following categorical equivalence:
\begin{conjecture}
The map $f$ induces an equivalence of the homology categories $H_*( \complexes)$ and $H_*( p \allcurves)^{op}$.
\end{conjecture}
This is a helpful philosophy to keep in mind, but strictly speaking Theorem \ref{thm:bijection} does not prove an equivalence of categories. To confirm such an equivalence, one would need to check that the correspondence between morphism homology spaces respects composition; we claim that it is straightforward to extend the argument in \cite{Hanselman:CFK} to identify compositions in $\Mor(C_1, C_2)$ with immersed triangles in an appropriate curve diagram, but we do not prove this. We also note that the proof that $H_*(\Mor_\sR(C_1, C_2)) \cong HF(p(f(C_2)), p(f(C_1)))$ relies on choosing a particular representative of an equivalence class of curves and should be extended to arbitrary objects.

Since a knot $K$ in $S^3$ determines an equivalence class of knot-like complex $\CFKr(K)$, by Theorem \ref{thm:bijection} it also determines a knot-like curve $f(\CFKr(K))$ over $\sR$; we let $\Gamma(K)$ denote this knot-like curve. That is, $\Gamma(K)$ is the decorated immersed curve in $\overline \sS_u$ representing the knot Floer complex of $K$. Note that $\Gamma(K)$ is an invariant of $K$ up to equivalence of knot-like curves. In particular the underlying multicurve is an invariant up to homotopy, and the bounding chain is invariant up to the equivalence discussed above.

In this section we sketch the main ideas of the correspondence in Theorem \ref{thm:bijection}. 

\subsection{From curves to complexes}
It is easier to define a map $g$ from ${\allcurves}/{\sim}$ to ${\complexes}/{\sim}$, which turns out to be the inverse of $f$. Given a knot-like curve $(\Gamma, \bchain)$ in $\allcurves$, we first view $(\Gamma, \bchain)$ as a decorated curve in $\overline \sS_{w,z}$ instead of $\overline \sS_u$ and then we define the complex $g((\Gamma, \bchain))$ to be the Floer chain complex $CF( (\Gamma, \bchain), \mu )$, where $\mu$ is the vertical line $\{0\}\times \R$ in $\overline \sS_{w,z}$ equipped with the constant bigrading function $(\gr_w, \gr_z) = (\frac 1 2, \frac 1 2)$. The result is clearly a bigraded complex over $\sR$, and clearly equivalent knot-like curves determine chain homotopic complexes. 

To check that the resulting complex $CF( (\Gamma, \bchain), \mu )$ is a knot-like complex, note that the vertical homology of the resulting complex  is obtained by ignoring the $z$ marked points, which means that up to equivalence we may slide $\mu$ leftward to the left edge of the strip. From this it is clear that the vertical homology has a single generator in grading $\lfloor \tau' - \tau \rfloor$, where $\tau = 0$ is the value of grading function on $\Gamma$ at both endpoints of its arc component and $\tau' = \frac 1 2$ is the grading on $\mu$. Similarly ignoring $w$ marked points and sliding $\mu$ to the right of the strip shows that the horizontal homology has a single generator in grading 0, as desired for a knot-like complex. 

Recall that the grading function $\tau$ on $\Gamma$ determines a bigrading $(\tau_w, \tau_z)$ when $\Gamma$ is viewed as a curve in $\overline \sS_{w,z}$. Note that generators of the Floer complex correspond to intersections of $\Gamma$ with $\mu$, and at each such intersection point $p_x$ with corresponding generator $x$, $\gr_w(x) = \lfloor \frac 1 2 - \tau_w(p_x)\rfloor = -\tau_w(p)$ where $\tau_w(p_x)$ agrees with the value of $\tau$ just to the left of $p_x$. Similarly, $\gr_z(x) = -\tau_z(p_x)$ where $\tau_z(p_x)$ is the value of $\tau$ just to the right of $p_x$. The Alexander grading of $x$ is half the difference $\gr_w(x) - \gr_z(x)$, or half of the jump in $\tau$ from the left side of $\mu$ to the right side of $\mu$ at $p_x$, which is simply (the nearest integer to) the height of $p_x$.

We can similarly define a map $\hat g$ from ${\allcurveshat}/{\sim}$ to ${\hatcomplexes}/{\sim}$ by taking the Floer chain complex of a knot-like curve over $\sRhat$ $(\Gamma, \widehat\bchain)$ modulo $\sU$ with $\mu$. This means that we ignore bigons that cover both $w$ and $z$ marked points. Combined with the assumption that $\Gamma$ intersects $\mu$ minimally, this implies we only need to consider bigons that lie entirely on the right side of $\mu$ (these contribute terms with some power of $W$) or entirely on the left side of $\mu$ (these contribute terms with some power of $Z$). If we ignore the local system decoration $\widehat\bchain$ then there is exactly one such bigon for each component of $\Gamma \setminus (\mu \cap \Gamma)$ that has both ends approaching $\mu$; if these segments lie on the right side of $\mu$ (we call such segments \emph{right segments} of $\Gamma$) they contribute horizontal arrows to the Floer complex and if they lie on the left of $\mu$ (such segments are called \emph{left segments} of $\Gamma$) they contribute vertical arrows. Because the intersection point $p$ has at most one contributing segment of $\Gamma$ on each side, it follows that when $\widehat\bchain = 0$ the construction of the Floer complex specifies a basis that is both horizontally and vertically simplified in the sense of \cite[Definition 11.23]{LOT:bordered}.

\subsection{From complexes to curves over $\sRhat$, given a simplified basis}

To define the map $\hat f$, given a knot-like complex $\widehat C$ over $\sRhat$ we wish to construct a knot-like curve $(\Gamma, \widehat\bchain)$ for which the Floer complex with $\mu$ recovers a complex homotopy equivalent to $\widehat C$. This turns out to be straightforward if we are given a basis for $\widehat C$ that is both horizontally and vertically simplified---we simply reverse the process above. For each generator $x$ we add a point $p_x$ on $\mu$ between $(0, A(x)-\frac 1 2)$ and $(0, A(x)+\frac 1 2)$, for each horizontal arrow from $x$ to $y$ in $\widehat C$ we add a segment on the right of $\mu$ connecting $p_x$ to $p_y$, and for each vertical arrow from $x$ to $y$ we add a segment connecting $p_x$ to $p_y$ on the left of $\mu$. Finally, for any point $p_x$ with no segment on the right we add a segment to the right of $\mu$ connecting $p_x$ to the point $(\frac 1 2, 0)$ and if there is no segment to the left of $p_x$ we add a segment on the left of $\mu$ connecting $p_x$ to $(-\frac 1 2, 0)$. The fact that the basis we used for $\widehat C$ was horizontally and vertically simplified implies that each point $p_x$ has exactly one segment attached on each side, so the result is an immersed multicurve in $\overline \sS_\bullet$. Since $\widehat C$ is a knot-like complex and has rank one horizontal and vertical homology the immersed multicurve must intersect each boundary of $\overline \sS_\bullet$ exactly once. We can check that there are no immersed monogons that do not cover a puncture in $\barT_*$, so $\Gamma$ is unobstructed on its own and we can set $\widehat\bchain = 0$. The grading function $\tau$ on $\Gamma$ is defined to agree with $-\gr_w$ just to the left of each intersection of $\Gamma$ with $\mu$ and to agree with $-\gr_z$ just to the right of $\mu$, and as such it must evaluate to 0 at the endpoints of the arc component. It is straightforward to check the multicurve is a knot-like curve in $\overline \sS_\bullet$ and the Floer chain complex of this curve with $\mu$ recovers the initial complex $\widehat C$ exactly.

In all of the examples in Section \ref{sec:examples} except Example \ref{ex:nontrivial-local-system} the complexes are given with respect to a horizontally and vertically simplified basis, so the above procedure determines the corresponding immersed multicurve $\Gamma$.

\subsection{From complexes to curves over $\sRhat$, general case}
The difficulty in the general case is that a knot-like complex $\widehat C$ over $\sRhat$ need not have a basis that is both horizontally and vertically simplified, and even if it does finding such a basis may be difficult. Without this nice basis constructing the correspondence $\hat f$ is more subtle. Given an arbitrary basis for $\widehat C$, attempting to follow the procedure above by adding a point $p_x$ for each generator $x$ and a left segment or right segment for each term in the differential produces an immersed train track. We can improve this to a train track that consists of an immersed multicurve with a collection of crossover arrows by choosing (separately) a horizontally simplified basis and a vertically simplified basis for $\widehat C$. We then draw the right side of the train track with respect to the horizontally simplified basis, we draw the left side with respect to the vertically generated basis, and we connect the two sides with a collection of horizontal arcs with crossover arrows between them determined by a change of basis between the two bases. One can check that this results in a train track of the desired form such that the Floer complex with $\mu$ recovers $\widehat C$ (see \cite[Section 7.1]{Hanselman:CFK}). Noting that a collection of crossover arrows on an immersed multicurve can be viewed as a bounding chain on the multicurve, possibly after applying a homotopy to the curve, we can then invoke Theorem \ref{thm:curves-structure-theorem} to find an equivalent decorated curve that is of local system type, and the result is a knot-like curve in $\allcurveshat$ that we define to be $\hat f(\widehat C)$. The uniqueness claim in Theorem \ref{thm:curves-structure-theorem} ensures that $\hat f(\widehat C)$ is well-defined (it depends only on the homotopy equivalence class of $\widehat C$) and that $\hat f \circ \hat g$ is the identity. Note that to actually compute $\hat f(\widehat C)$ requires applying the arrow sliding algorithm used in the proof of Theorem \ref{thm:curves-structure-theorem}; this is simple in small examples but computationally involved in general.

\subsection{From complexes to curves over $\sR$}\label{sec:minus-curves}

The construction of curves representing complexes over $\sR$ uses the $\sRhat$ case as a foundation. Given a knot-like complex $C$ over $\sR$, we first reduce mod $\sU$ to obtain a complex $\widehat C$ over $\sRhat$ and construct the corresponding knot-like curve $\hat f(\widehat C) = (\Gamma, \bchainhat)$ as described above. 

We fix the curve $\Gamma$ from here on and initially set $\bchain = \bchainhat$. We now consider the Floer complex of $(\Gamma, \bchain)$ with $\mu$ in $\overline \sS_{w,z}$. By assumption this recovers $\widehat C$ over $\sRhat$, but over $\sR$ it may not recover $C$ and in fact $CF((\Gamma, \bchain), \mu)$ may not even give a chain complex since $(\Gamma, \bchain)$ may not be unobstructed (we say still that $CF((\Gamma, \bchain), \mu)$ is a \emph{precomplex}, an object like a chain complex without the requirement that $\partial^2 = 0$). The strategy is to systematically modify $\bchain$ by adding or removing self-intersection points of strictly positive degree until the precomplex $CF( (\Gamma, \bchain), \mu)$ agrees with $C$. This can be done by considering one potential diagonal arrow at a time, and if $C$ and $CF((\Gamma, \bchain), \mu)$ disagree for this arrow it can be shown that a crossover arrow may be added that modifies $CF((\Gamma, \bchain), \mu)$ to agree with $C$ without affecting any arrows that have been considered previously. This crossover arrow can be placed at a self-intersection point (possibly after an arrow slide that may change the preferred basis of $C$) and thus interpreted as a new point in $\bchain$. This construction is described in more detail in \cite[Section 9.2]{Hanselman:CFK}. We remark that this construction requires repeatedly comparing the intermediate decorated curves $(\Gamma, \bchain)$, and the resulting precomplexes $CF((\Gamma, \bchain), \mu)$, to the complex $C$, and the geometric object may not correctly encode $C$ until the last step.

The procedure above produces an unobstructed decorated immersed curve $(\Gamma, \bchain)$ in $\overline \sS_u$ that represents $C$, in the sense that $g( (\Gamma, \bchain) ) = C$. Because the curve $\Gamma$ is not changed throughout the process of modifying $\bchain$ and the restriction $\bchainhat$ of $\bchain$ to $\sI_0$ is never modified, we have by construction that the mod $\sU$ reduction $(\Gamma, \bchainhat)$ is a knot-like curve over $\sRhat$. It follows that $(\Gamma, \bchain)$ is a knot-like curve over $\sR$. We define $f(C)$ to be the equivalence class of knot-like curve determined by $(\Gamma, \bchain)$. Recall that the multicurve $\Gamma$ in a representative of such an equivalence class is well-defined up to homotopy and that $\bchainhat$ is well-defined as a subset of the local system self-intersection points of $\Gamma$, but we caution that the subset $\bchain$ is not in general uniquely determined. To summarize, we have the following:

\begin{theorem}\label{thm:curves-structure-theorem-minus}
Any equivalence class of knot-like complex $C$ over $\sR$ can be represented by an equivalence class of knot-like curve $(\Gamma, \bchain)$ in $\overline \sS_u$. Moreover, for such a representative the mod $\sU$ reduction $(\Gamma, \bchainhat)$ is well-defined up to homotopy.
\end{theorem}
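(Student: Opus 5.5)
The plan follows the construction described in Section \ref{sec:minus-curves}: existence is built in two stages, and the well-definedness claim is reduced to the hat case. Given a knot-like complex $C$ over $\sR$, reduce mod $\sU$ to get $\widehat C \in \hatcomplexes$. Then apply the hat correspondence. Choose separately a horizontally simplified basis and a vertically simplified basis, draw the train track whose right side comes from the first basis and whose left side comes from the second, joined by crossover arrows encoding the change of basis. Next invoke Theorem \ref{thm:curves-structure-theorem} to slide arrows until we reach a representative $(\Gamma, \bchainhat)$ in almost simple position with $\bchainhat$ of local system type. By construction $CF((\Gamma,\bchainhat),\mu)\simeq \widehat C$ over $\sRhat$, so this is a knot-like curve over $\sRhat$.

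The second stage lifts to $\sR$. Fix $\Gamma$ and set $\bchain=\bchainhat$ initially. Order the possible diagonal arrows of $C$, those whose coefficients are divisible by $\sU$, by a filtration: first by the power of $\sU$, then by a secondary order on pairs of generators. Induct along this order. At each step compare the precomplex $CF((\Gamma,\bchain),\mu)$ with $C$ on the current arrow. If they disagree, add or remove a crossover arrow of weight $\sU^d$ with $d>0$ connecting the relevant strands. The filtration should ensure this only changes the current coefficient plus terms later in the order. Then slide the arrow to a self-intersection point of $\Gamma$ of degree $d$, possibly changing the basis of $C$ by a filtered change of basis, so that it becomes an element of $\bchain$.

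Termination follows from finite generation and the Alexander/Maslov grading constraints, which bound the powers of $\sU$ that can occur. At the end $CF((\Gamma,\bchain),\mu)$ agrees with $C$ up to homotopy, so $\partial^2=0$ on it. The no-monogon condition $m_0^\bchain=0$ should then follow from $\partial^2 = 0$ on this complex for enough test lines, namely $\mu$ and its translates. Since $\Gamma$ and $\bchain|_{\sI_0}$ were never modified, $(\Gamma,\bchain)$ is knot-like over $\sR$.

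For the second claim, suppose $(\Gamma,\bchain)$ and $(\Gamma',\bchain')$ both represent $C$. Their mod $\sU$ reductions both represent $\widehat C$, because $g$ commutes with reduction mod $\sU$. By the hat bijection $\hat f$ of Theorem \ref{thm:bijection} they are therefore Floer-equivalent in $\allcurveshat$. The uniqueness part of Theorem \ref{thm:curves-structure-theorem} then forces $\Gamma\simeq\Gamma'$ and $\bchainhat=\bchainhat'$.

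The main obstacle is the inductive step. I need to show that the discrepancy on a given diagonal arrow can always be corrected by a single crossover arrow whose effect on the precomplex is confined to later terms in the order. I also need to show this arrow can be realized at an actual self-intersection point of the fixed curve $\Gamma$ with the correct degree. This needs a grading argument: the bigrading of the arrow in $C$ should match the degree of some self-intersection between the two relevant strands, possibly after a homotopy and arrow slides past marked points, which rescale weights by $\sU$ as in Figure \ref{fig:minus-arrow-slides}. One must also verify that these slides do not create new lower-order terms. Deducing unobstructedness from $\partial^2=0$ against $\mu$ is the secondary subtle point.
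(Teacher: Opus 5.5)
Your proposal is correct and follows essentially the same route as the paper. You build $(\Gamma,\bchainhat)=\hat f(\widehat C)$ via the hat train-track construction and Theorem \ref{thm:curves-structure-theorem}, then fix $\Gamma$ and $\bchainhat$ and add positive-degree self-intersection points to $\bchain$ one diagonal arrow at a time, with the mod $\sU$ reduction well-defined by the uniqueness in the hat structure theorem; the steps you flag as obstacles (correcting each discrepancy with a single crossover arrow placed at a self-intersection point without disturbing earlier arrows, and unobstructedness of the final decoration) are exactly the ones the paper also only asserts and defers to \cite[Section 9.2]{Hanselman:CFK}.
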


\subsection{Morphisms}\label{sec:morphisms}

To complete Theorem \ref{thm:bijection} we must show that the correspondence of objects defined by $f$ or $\hat f$ respects morphisms in the corresponding categories. Consider knot-like complexes $C$ and $C'$ with corresponding knot-like curves $f(C) = (\Gamma, \bchain)$ and $f(C') = (\Gamma', \bchain')$. The morphism space $\Mor_{\sR}(C, C')$ is a bigraded chain complex over $\sR$, but it splits as a direct sum over Alexander gradings
$$\Mor_{\sR}(C, C') = \bigoplus_{s\in\Z} \Mor_{\sR}(C, C')_s$$
and each summand $\Mor_{\sR}(C, C')_s$ may be viewed as a chain complex over $\F[\sU]$.

We claim that the space of Alexander grading preserving morphisms $\Mor_{\sR}(C, C')_0$ is chain homotopy equivalent to the Lagrangian Floer complex $CF( (\Gamma', \bchain'), (\Gamma, \bchain) )$. This can be seen by perturbing the curves into a particular position so that the Floer complex agrees with $\Mor_{\sR}(C, C')_0$ at the chain level. Figure \ref{fig:morphisms} shows the example of $\Mor_\sR(C, C')_0$ where $C = \CFKr(RHT)$ and $C' = \CFKr(LHT)$. Recall that when defining this Floer complex in $\overline \sS_u$ we must push the endpoints of $\Gamma$ downward on the left boundary of $\overline \sS_u$ and upward on the right boundary. We may arrange $\Gamma$ and $\Gamma'$ to intersect the strip $[-\frac 1 4, \frac 1 4]\times \R$ in a collection of horizontal segments and then perturb $\Gamma$ by pushing it downward on the left and upward on the right so that each perturbed horizontal segment of $\Gamma$ (corresponding to a generator of $C$) intersects each horizontal segment of $\Gamma'$ (corresponding to a generator of $C'$). Each resulting intersection point corresponds to the elementary morphism taking the relevant generator of $C$ to the relevant generator of $C'$ times an appropriate power of $W$ or $Z$ determined by the height difference between the two generators. Thus there is a bijection between generators of $CF( (\Gamma', \bchain'), (\Gamma, \bchain) )$ and generators of $\Mor_{\sR}(C, C')_0$. It is not difficult to check that the differentials also agree; see \cite[Proposition 10.1]{Hanselman:CFK}.
\begin{figure}
\includegraphics[scale = 1]{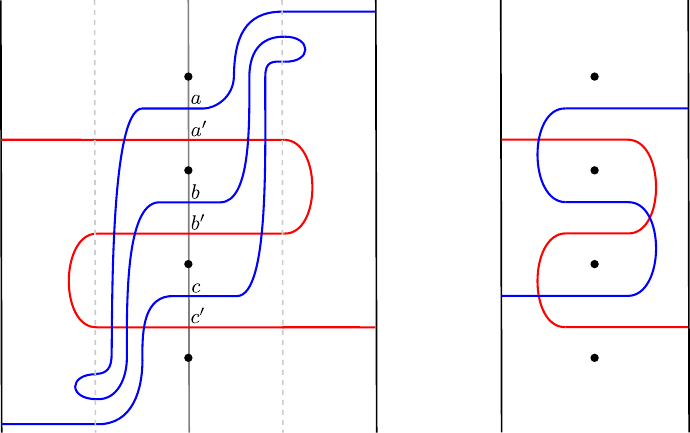}
\caption{When curves are arranged as on the left, the Floer chain complex of $(\Gamma', \bchain')$  (red) and $(\Gamma, \bchain)$ (blue) agrees with the complex $\Mor(C, C')$. Putting curves in minimal position as on the right simplifies the computation of $H_* \Mor(C, C')$.}
\label{fig:morphisms}
\end{figure}

Because shifting one curve upward or downward corresponds to shifting the Alexander grading of the corresponding complex, the summands $\Mor_\sR(C, C')_s$ correspond to the Floer complex of $(\Gamma', \bchain')$ with the result of translating $(\Gamma, \bchain)$ upward by $s$. To compute all summands at once and obtain the homology of $\Mor_\sR(C, C')$, we can project the curves to $\sS_u$ and take the (wrapped) Lagrangian Floer homology.

We remark that with curves perturbed as described above, the Floer complex $CF( (\Gamma', \bchain'), (\Gamma, \bchain) )$ can also be identified with the complex $C^\vee \otimes C'$, where $C^\vee$ is the dual complex to $C$. Since the curve representing $C^\vee$ is the vertical reflection of the curve representing $C$, we can also say that the complex $C \otimes C'$ is homotopy equivalent to the complex $CF( (\Gamma', \bchain'), m(\Gamma, \bchain) )$ where $m$ denotes the vertical mirror image of a decorated curve.

\section{Examples}\label{sec:examples}

In this selection we give several examples of the decorated immersed curve $\Gamma(K)$ for knots $K$ in $S^3$, as well as curves representing knot-like complexes over $\sR$ which may or may not be realized as invariants of knots in $S^3$.

\begin{figure}
\begin{tabular}{cccc}
$LHT$ & $4_1$ & $C_{2,-1}(LHT)$ & $C_{2,-1}(4_1)$\\
\toprule
$\begin{array}{l}
\gr(a) = (2,0) \\
\gr(b) = (1,1) \\
\gr(c) = (0,2) \end{array}$  & 
$\begin{array}{l}
\gr(a) = (1,-1) \\
\gr(b) = (0,0) \\
\gr(c) = (0,0) \\
\gr(d) = (0,0) \\
\gr(e) = (-1,1) \end{array}$ & 
$\begin{array}{l}
\gr(a) = (4,0) \\
\gr(b) = (3,1) \\
\gr(c) = (2,0) \\
\gr(d) = (1,1) \\
\gr(e) = (0,2) \\
\gr(f) = (1,3) \\
\gr(g) = (0,4)\end{array}$ & 
$\begin{array}{l}
\gr(a) = (3,-1) \\
\gr(b) = (2,0) \\
\gr(c) = (1,-1) \\
\gr(d) = (0,0) \\
\gr(e) = (0,0) \\
\gr(f) = (0,0) \\
\gr(g) = (-1,1) \\
\gr(h) = (0,2)\\
\gr(i) = (-1,3) \end{array}$\\

\midrule 
\\[-.7 em]
$\begin{array}{l}
\partial  a = Zb  \\
\partial b = 0 \\
\partial c = Wb \end{array}$ &
$\begin{array}{l}
\partial  a = Zc  \\
\partial b = Wa + Ze \\
\partial c = 0 \\
\partial d = 0 \\
\partial e = Wc \end{array}$  & 
$\begin{array}{l}
\partial a = Zb  \\
\partial b = 0 \\
\partial c = WZb + Z^2 f \\
\partial d = Wc + Ze \\
\partial e = W^2b + WZf \\
\partial f = 0 \\
\partial g = Wf \end{array}$ & 
$\begin{array}{l}
\partial a = Zb  \\
\partial b = 0 \\
\partial c = Z^2 h \\
\partial d = Wc + Z^2i\\
\partial e = 0\\
\partial f = W^2 a + Zg \\
\partial g = W^2b \\
\partial h = 0 \\
\partial i = Wh \end{array}$\\
\midrule 
\\[-.5 em]
\raisebox{8mm}{
\begin{tikzpicture}[scale = .8]
    \node (a) at (0,2) {$a$};
    \node (b) at (2,2) {$b$};
    \node (c) at (2,0) {$c$};

    \draw[->] (b) -- node[above] {$W$} (a);
    \draw[->] (b) -- node[right] {$Z$} (c);
\end{tikzpicture}} & 
\raisebox{6mm}{
\begin{tikzpicture}[scale = .8]
    \node (a) at (0,2) {$a$};
    \node (b) at (2,2) {$b$};
    \node (c) at (0,0) {$c$};
    \node (d) at (-.3,-.3) {$d$};
    \node (e) at (2,0) {$e$};

    \draw[->] (b) -- node[above] {$W$} (a);
    \draw[->] (b) -- node[right] {$Z$} (e); 
    \draw[->] (e) -- node[above] {$W$} (c);
    \draw[->] (a) -- node[right] {$Z$} (c); 
    \end{tikzpicture}} & 
\raisebox{0mm}{
\begin{tikzpicture}[scale = .8]
    \node (a) at (0,4) {$a$};
    \node (b) at (0,2) {$b$};
    \node (c) at (4,2) {$c$};
    \node (d) at (4,4) {$d$};
    \node (e) at (2,4) {$e$};
    \node (f) at (2,0) {$f$};
    \node (g) at (4,0) {$g$};

    \draw[->] (a) -- node[xshift = 6pt] {$Z$} (b); 
    \draw[->] (c) -- node[pos = .15, yshift = 6pt] {$W^2$} (b);
    \draw[->] (d) -- node[xshift = 6pt] {$Z$} (c);
    \draw[->] (d) -- node[above] {$W$} (e);
    \draw[->] (e) -- node[pos=.15, xshift = 7pt] {$Z^2$} (f);
    \draw[->] (g) -- node[above] {$W$} (f);
    \draw[->] (e) -- node[yshift = 15pt] {$WZ$} (b);
    \draw[->] (c) -- node[xshift = 15pt] {$WZ$} (f);

    \end{tikzpicture}}  & 
\raisebox{0mm}{
\begin{tikzpicture}[scale = .8]
    \node (a) at (0,4.2) {$a$};
    \node (b) at (0,2) {$b$};
    \node (c) at (2,3.8) {$c$};
    \node (d) at (3.8,3.8) {$d$};
    \node (e) at (0,0) {$e$};
    \node (f) at (4.2,4.2) {$f$};
    \node (g) at (4.2,2) {$g$};
    \node (h) at (2,0) {$h$};
    \node (i) at (3.8,0) {$i$};

    \draw[->] (a) -- node[xshift = 6pt] {$Z$} (b); 
    \draw[->] (f) -- node[xshift = 6pt] {$Z$} (g); 
    \draw[->] (f) -- node[yshift = 8pt] {$W^2$} (a); 
    \draw[->] (g) -- node[pos = .75, yshift = 6pt] {$W^2$} (b); 
    
    \draw[->] (d) -- node[below] {$W$} (c);
    \draw[->] (i) -- node[above] {$W$} (h);
    \draw[->] (c) -- node[pos = .75, xshift = -8pt] {$Z^2$} (h);
    \draw[->] (d) -- node[pos = .75, xshift = 7pt] {$Z^2$} (i);
    \end{tikzpicture}} \vspace{2mm} \\

\midrule
\\
\phantom{--} \raisebox{13.22mm}{\includegraphics{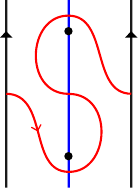}} \phantom{--} & 
\phantom{--} \raisebox{13.22mm}{\includegraphics{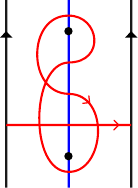}} \phantom{--} & 
\phantom{--} \includegraphics{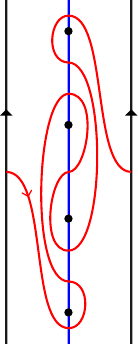} \phantom{--} & 
\phantom{--} \includegraphics{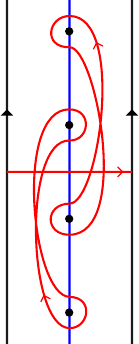} \phantom{--} \\ \bottomrule
\end{tabular}
\caption{Knot Floer complexes $\CFKr(K)$ and corresponding immersed curves $\Gamma(K)$ in the punctured strip $\barT_u$ for several knots: the left-hand trefoil, the figure-eight knot, and the $(2,-1)$ cables of these. The complex is drawn as a graph in the plane and the corresponding immersed curve is shown. In all four examples the bounding chain $\bchain$ is 0. The complexes can be read off the curves by taking Floer homology with the vertical line $\mu$, after passing from $\barT_u$ to $\barT_{w,z}$ by replacing each marked point with a pair of marked points. In particular, generators correspond to intersections with $\mu$ and should be labelled by letters $\{a,b,c,\ldots \}$ ordered from top to bottom. The value of the grading function at each intersection point is indicated in gray, and the orientation on the curves is determined by the grading function mod 2.}
\label{fig:first_examples}

\end{figure}

\begin{example}\label{ex:first-examples}
Figure \ref{fig:first_examples} shows knot Floer complexes and corresponding immersed curves for the left-hand trefoil ($LHT$), the figure-eight knot ($4_1$), and the $(2,-1)$ cable of each of these. The complexes are recorded with respect to bases that are both horizontally and vertically simplified, so it is straightforward to obtain the curves $\Gamma(K)$ representing $\CFKrhat(K)$: there is a point on $\mu$ for each generator, with height given by the grading $A = \frac{\gr_w - \gr_z}{2}$, and we add a right (resp. left) arc connecting points corresponding to $x$ and $y$ if $\partial x$ has a $W^k y$ term (resp. a $Z^k y$ term) for some $k$. It also follows from the fact that the bases are horizontally and vertically simplified that the local system decoration $\widehat\bchain$ is trivial. Since Floer homology of $\Gamma(K)$ with $\mu$ over $\sR$ correctly recovers the full complex $\CFKr$, we also have that $\bchain = 0$ in each case. This fact is also forced by the curves, since $\bchain$ is a linear combination of the positive degree self-intersection points of $\Gamma(K)$: for the trefoil and its cable there are no self-intersection points of $\Gamma(K)$, and for the figure-eight and its cable all self-intersection points have degree 0.
\end{example}

\begin{remark}\label{rmk:curve-notation} We note that instead of drawing pictures, the immersed curves $\Gamma(K)$ can be efficiently recorded by listing the (integer) heights of the sequence of points at which each component crosses $\mu$. For the single component of $\Gamma(K)$ that meets $\mu_{\frac 1 2}$ traverse the curve starting at $\mu_{\frac 1 2}$ following the orientation (which always moves rightward initially), but for all other components there is not a well-defined starting point so the resulting lists of integers are defined only up to cyclic permutation. We will use $[ \cdots ]$ for the list representing the distinguished component and $( \cdots )$ for the lists representing other components. Here we adopt the (arbitrary) convention to pick the largest of all cyclic reorderings of a list ordered lexicographically; in particular, we start traversing each component at its highest point. The grading function can be specified by recording its value at the intersection with $\mu$ corresponding to the first element in the list; we will denote this with a subscript. The first grading at the first intersection with $\mu$ on the distinguished component is always 0, so the subscript on that component can be omitted. With these shorthand notational conventions in place, the invariants in Example \ref{ex:first-examples} can be stated as follows:
\begin{align*}
\Gamma(LHT) &= [-1,0,1] \\
\Gamma(4_1) &= [0] \sqcup (1,0,-1,0)_1\\
\Gamma(C_{2,-1}(LHT)) &= [-2, -1, 1, 0, -1, 1, 2] \\
\Gamma(C_{2,-1}(4_1)) &= [0] \sqcup (2,1,-1,0)_1 \sqcup (1,-1,-2,0)_1
\end{align*}
It is not straightforward to represent the bounding chain decoration $\bchain$ using this shorthand for $\Gamma(K)$, but since $\bchain$ is very often trivial or determined by $\Gamma(K)$, the shorthand notation is useful in practice.
\end{remark}

For readers seeking more examples from knots, the immersed curves $\Gamma(K)$ for all prime knots with at most 16 crossings have been computed and are publicly available (with notation similar to that introduced in Remark \ref{rmk:curve-notation}) \cite{Hanselman:CFK-code}. It is also easy to obtain the curves for any cable from the curves for the companion as described in \cite{HW:cabling}. Connected sums and some other satellite operations are reasonable to compute as a source of more examples.

The examples so far have trivial bounding chain decoration. The next example will have $\bchain \neq 0$, though $\bchain$ is still uniquely determined by the curve $\Gamma(K)$. This example arises as one component of the invariant for a knot; we will ignore the other components for simplicity.

\begin{figure}
$\begin{array}{l}
\gr(a) = (2, -6) \\ [1 em]
\gr(b) = (1, -5) \\ [1 em]
\gr(c) = (0, -4) \\ [1 em]
\gr(d) = (1, -1) \\
\gr(e) = (-1, -3) \\ [1 em]
\gr(f) = (0, 0) \\
\gr(g) = (-2, -2) \\
\gr(h) = (0, 0) \\ [1 em]
\gr(i) = (-3, -1) \\
\gr(j) = (-1, 1) \\ [1 em]
\gr(k) = (-4, 0) \\ [1 em]
\gr(l) = (-5, 1) \\ [1 em]
\gr(m) = (-6, 2)
\end{array}$ \hspace{1 cm}
$\begin{array}{l}
\partial a = Z^3d \\ [1 em]
\partial b = Wa + Zc\\ [1 em]
\partial c = WZ^2 d\\ [1 em]
\partial d = 0\\
\partial e = Wc + WZ^2 f + Zg\\ [1 em]
\partial f = Wd \\
\partial g = 0\\
\partial h = Zj\\ [1 em]
\partial i = Wg + W^2 Z h + Z k\\
\partial j = 0\\ [1 em]
\partial k = W^2 Z j\\ [1 em]
\partial l = Wk + Zm\\ [1 em]
\partial m = W^3 j
\end{array}$ \hspace{1 cm}
\raisebox{-4cm}{\includegraphics{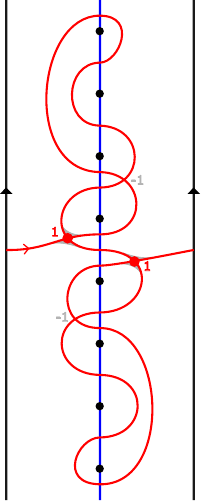}}
\caption{A component of the knot Floer complex $\CFKr(T_{2,9}\# - T_{2,3;2,5})$, along with the corresponding decorated curve $(\Gamma, \bchain)$. Intersections of $\Gamma$ with $\mu$ correspond to generators $a, \ldots, m$ from top to bottom. Each self-intersection point is labelled by its degree. The bounding chain decoration $\bchain$ is nontrivial, with the two indicated self-intersection points having nonzero coefficient. These points must be included in $\bchain$ to recover the $WZ^2f$ term in $\partial e$ and the $W^2Zh$ term in $\partial i$.}
\label{fig:nontrivial-bchain}
\end{figure}
\begin{example}\label{ex:nontrivial-bchain}
Figure \ref{fig:nontrivial-bchain} shows one direct summand of the knot Floer complex $\CFKr(K)$ where $K = T_{2,9}\# - T_{2,3;2,5}$, and the corresponding component of $\Gamma(K)$. The immersed curve has four self-intersection points, which are labelled in the diagram by their degree. Recall that the degrees are determined by the grading function: when making a left turn at a self-intersection consistent with the orientations, the grading function increases by $\theta + 2d$, where $\theta$ is the angle between the two tangent lines of the curve at the self-intersection point and $d$ is the degree. For example, just to the right of the point in $\Gamma \cap \mu$ corresponding to the generator $f$ the grading function has value $-\gr_z(f) = 0$ and just to the right of the point corresponding to $e$ the grading function has value $-\gr_z(e) = 3$; from this one can check that the higher self-intersection point on the right side of $\mu$ has degree $-1$. The bounding chain $\bchain$ is a subset of (equivalently, an $\F$-linear combination of) the two positive degree self-intersection points. In principle this gives four possibilities, but in fact only one is a valid bounding chain. This is because each degree $-1$ self-intersection point is the corner of a monogon covering two marked points, and in each case the only possible canceling monogon is a (generalized) monogon covering one marked point and making a turn at a degree 1 self-intersection point. Thus for $m_0^\bchain$ to vanish both degree 1 self-intersection points must be included in $\bchain$. One can check that when $\Gamma$ is decorated with this $\bchain$, the Floer complex with $\mu$ completely recovers the complex over $\sR$.
\end{example}

Example \ref{ex:nontrivial-bchain} has a nontrivial bounding chain decoration $\bchain$, but that decoration is uniquely determined by the curve $\Gamma(K)$. This does not always happen, as the next two examples show.

\begin{figure}
$\begin{array}{l}
\gr(a) = (2,0) \\
\\
\gr(b) = (1,1) \\
\gr(c) = (1,1) \\
\gr(d) = (0,0) \\
\\
\gr(e) = (0,2) \end{array}$  \hspace{1 cm}
\raisebox{-15 mm}{\includegraphics{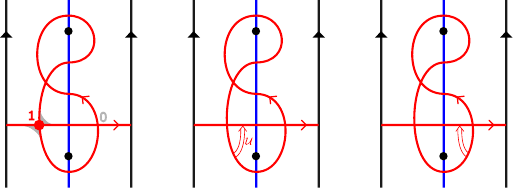}}
\caption{A multicurve with two distinct valid bounding chain decorations, $\bchain$ is either empty or contains the indicated intersection point. These correspond to two complexes that have the same $\sU = 0$ quotient but different diagonal arrows. The two choices of $\bchain$ are equivalent (the corresponding complexes are homotopy equivalent), which can be seen geometrically by interpreting the nontrivial choice of $\bchain$ as a crossover arrow and sliding the arrow over a marked point before removing it.}
\end{figure}
\begin{example}\label{ex:equivalent-bchains}
Consider a complex $C$ like $\CFKr(4_1)$ from Example \ref{ex:first-examples}, but with the bigrading of all generators but $d$ shifted up by $(1,1)$. Consider also $C'$ which is the same as $C$ but with a $WZ d$ term added to $\partial b$. $C$ and $C'$ agree mod $\sU$, and the multicurve $\Gamma$ representing either complex over $\sRhat$ is the same as for $4_1$ except that the grading function increases by one (and the orientation reverses) on the figure-eight component. Now $\Gamma$ has a self-intersection point $p$ of degree 1, and we may consider $\bchain = \emptyset$ or $\bchain' = \{p\}$. Since including $p$ does not introduce any generalized monogons, both choices are valid bounding chains. It is simple to see that $(\Gamma, \bchain)$ represents the complex $C$ while $(\Gamma, \bchain')$ represents the complex $C'$.

This shows that the bounding chain decoration, when viewed as a subset of the self-intersection points, is not uniquely determined by the immersed curve. However, in this example the two choices are in fact equivalent because the corresponding complexes $C$ and $C'$ are homotopy equivalent. In fact, $C'$ is obtained from $C$ by the change of basis replacing $e$ with $e + W d$. The equivalence of $\bchain$ and $\bchain'$ can be obtained by interpreting $(\Gamma, \bchain')$ as a train track with a crossover arrow near $p$ labeled by $\sU^{\deg(p)} = \sU$ as in the figure. We can then slide the crossover arrow across the marked point, decreasing the power of $\sU$ by one in the process, which corresponds to the change of basis stated above. Finally, after sliding past the other self-intersection point the (now unlabelled) crossover arrow moves left-to-right and can be removed.
\end{example}

Example \ref{ex:equivalent-bchains} highlights the subtlety in the bounding chain decoration. The decoration $\bchain$ is unique up to equivalence, but equivalence is a fairly abstract notion and some work is required to check if two different subsets of self-intersection points are equivalent bounding chain decorations. One might hope that there is a preferred simplest representative of each equivalent class of decorations $\bchain$; for instance, in Example \ref{ex:equivalent-bchains} the empty decoration is clearly simpler and it makes sense to declare that $\bchain$, and not $\bchain'$, is the standard decoration used to represent this equivalence class of complexes. If such a normal form for bounding chains exists, then two decorations on the same curve can be compared by ensuring both are in normal form and then simply comparing them as subsets of the self-intersection points. Unfortunately it is not yet clear if such normal form exists in general. A reasonable approach would be to interpret bounding chains as crossover arrows and removing them if possible by arrow sliding moves, showing that there is a unique arrangement in which all removable arrows have been removed, and this seems to work in practice, but sliding arrows in the minus setting can be subtle (see Figure \ref{fig:minus-arrow-slides}) and a complete arrow sliding calculus for curves in marked surfaces has not been worked out.

The next example shows that for some multicurves $\Gamma$ the bounding chain is not uniquely determined even up to equivalence. Thus it is possible for the decorated curve $(\Gamma, \bchain)$ to carry strictly more information than the curve $\Gamma$ alone. The example we give (taken from \cite[Proposition 9.4]{OSS:upsilon}) is an abstract complex, we do not know if this occurs in the knot Floer complex of any knot.

\begin{figure}
$\begin{array}{l}
\gr(a) = (2,-4) \\
\\
\gr(b) = (-3,-3) \\
\gr(c) = (1,1) \\
\gr(d) = (0,0) \\
\\
\gr(e) = (-4,2) \end{array}$  \hspace{1 cm}
$\begin{array}{l}
\partial a = Z^3 c \\
\\
\partial b = W^3 a + Z^3 d + x W^2 Z^2 e \\
\partial c = 0 \\
\partial d = W^3 c \\
\\
\partial e = y WZ c \end{array}$ \hspace{1 cm}
\raisebox{-30 mm}{\includegraphics{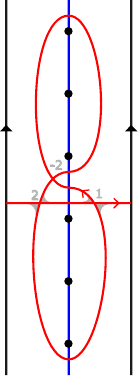}}
\caption{A knot-like complex depending on parameters $x, y \in \F$, at least one of which is 0, and the corresponding decorated immersed curve. The complexes agree over $\sRhat$, so the immersed curve $\Gamma$ is the same for each complex, but there are three (non-equivalent) choices of bounding chain decoration based on $x$ and $y$. In particular $\bchain$ includes the degree 2 self-intersection point when $x = 1$ and the degree 1 self-intersection point when $y=1$.}
\label{fig:different-bchains}
\end{figure}
\begin{example}\label{ex:different-bchains}
Figure \ref{fig:different-bchains} shows a complex which can include at most one of two optional terms in the differential (including both would violate $\partial^2 = 0$), resulting in three distinct complexes over $\sR$ that all agree mod $\sU$. The multicurve $\Gamma$ representing the $\sRhat$ quotient of the complex is also shown. The curve has two positive degree self-intersection points, one with degree 2 and the other with degree 1, and representing the complex over $\sR$ requires decorating the curve with a subset of these two points. Including the first corresponds to including the term with coefficient $x$ in the differential, and including the other corresponds to including the term with coefficient $y$; note that including both points in $\bchain$ is not allowed since this would create a generalized monogon with its corner at the degree $-2$ self-intersection point.

The three choices of bounding chain are not equivalent because the corresponding complexes over $\sR$ are not homotopy equivalent, a fact that can be checked by extracting certain numerical invariants from each complex (see Section \ref{sec:invariants-from-v_s}). Note that we could represent the bounding chain as a crossover arrow and attempt to remove it as in Example \ref{ex:equivalent-bchains}, but since arrows can only slide past a puncture on their right at the expense of decreasing the power of $\sU$ on the arrow and negative powers are not allowed, the arrow is not able to slide past all three punctures and is thus not removable.
\end{example}

All examples so far have trivial local system decoration $\widehat\bchain$---we will next consider an example of a knot-like complex with nontrivial $\widehat\bchain$. We note that this example is an abstract knot-like complex, it is currently unknown whether there is a knot for which the associated immersed curve carries a nontrivial local system decoration.

\begin{figure}
$\begin{array}{l}
\gr(a) = (2, -2) \\
\gr(b) = (2, -2) \\
\\
\\
\gr(c) = (-1,-1 ) \\
\gr(d) = (-1, -1) \\
\gr(e) = (1, 1) \\
\gr(f) = (1, 1) \\
\gr(g) = (0, 0) \\
\\
\\
\gr(h) = (-2, 2) \\
\gr(i) = (-2, 2) \\
\end{array}$  \hspace{1 cm}
$\begin{array}{l}
\partial a = Z^2 f \\
\partial b = Z^2 e \\
\\
\\
\partial c = W^2 b + Z^2 i \\
\partial d = W^2 a + Z^2 h \\
\partial e = 0 \\
\partial f = 0 \\
\partial g = 0 \\
\\
\\
\partial h = W^2 e + W^2 f\\
\partial i = W^2 f
 \end{array}$ \hspace{1 cm}
\raisebox{-35 mm}{\includegraphics{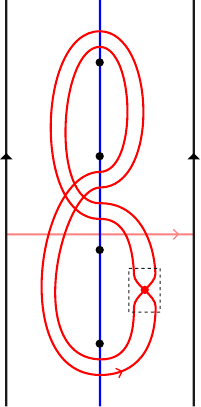}}
\caption{A knot-like complex over $\sRhat$ for which the corresponding immersed curve has nontrivial local system decoration. The self-intersection point in the dotted rectangle is the local system self-intersection point for the non-primitive curve component, and it is included in $\widehat\bchain$.}
\label{fig:local-system-hat}
\end{figure}

\begin{figure}
$\begin{array}{l}
\gr(a) = (3, -1) \\
\gr(b) = (3, -1) \\
\gr(c) = (2, -2) \\
\gr(d) = (2, -2) \\
\\
\\
\gr(e) = (-1, -1) \\
\gr(f) = (-1, -1) \\
\gr(g) = (0, 0) \\
\gr(h) = (0, 0) \\
\gr(i) = (1, 1) \\
\gr(j) = (1, 1) \\
\gr(k) = (2, 2) \\
\gr(l) = (2, 2) \\
\gr(m) = (0, 0) \\
\\
\\
\gr(n) = (-1, 3) \\
\gr(o) = (-1, 3) \\
\gr(p) = (-2, 2) \\
\gr(q) = (-2, 2) \end{array}$  \hspace{5 mm}
$\begin{array}{l}
\partial a = Z^2 l \\
\partial b = Z^2 k \\
\partial c = Z^2 j + WZa \\
\partial d = Z^2 i + WZb \\
\\
\\
\partial e = W^2 d + WZg + Z^2 o \\
\partial f = W^2 c + WZ h + Z^2 n \\
\partial g = W^2 b + WZi + WZj + Z^2 q \\
\partial h = W^2 a + WZi + Z^2 p\\
\partial i = WZ k\\
\partial j = WZ l\\
\partial k = 0 \\
\partial l = 0 \\
\partial m = 0\\
\\
\\
\partial n = W^2 i + W^2 j + WZ p \\
\partial o = W^2 j + WZq \\
\partial p = W^2 k + W^2 l \\
\partial q = W^2 l 
 \end{array}$ \hspace{5 mm}
\raisebox{-42 mm}{\includegraphics{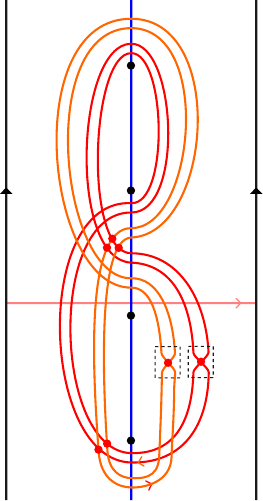}}
\caption{A knot-like complex over $\sR$ and corresponding immersed curve with nontrivial local system decoration. The self-intersection points in the dotted rectangles are the local system self-intersection points for the two non-primitive curve components, and both are included in $\widehat\bchain$; these as always have degree 0. The other self-intersection points included in $\bchain$ all have degree 1.}
\label{fig:local-system}
\end{figure}
\begin{example}\label{ex:nontrivial-local-system}
Figure \ref{fig:local-system-hat} gives a complex over $\sRhat$ for which the corresponding curve has $\widehat\bchain \neq \emptyset$. The complex does not have a horizontally simplified basis, as there are two horizontal arrows starting at $h$. Note that one component of the immersed curve is not primitive, it is homotopic to two copies of a figure-eight shaped primitive curve. The self-intersection point in the dotted box is a local system intersection point, and by the normal form for decorated immersed curves this is the only point that may be included in a bounding chain decoration $\bchainhat$ over $\sRhat$. The term $W^2 f$ in $\partial h$ forces this point to be included in $\bchainhat$, and the same immersed curve decorated with the trivial bounding chain $\bchainhat$ would produce this complex with the $W^2 f$ term removed from $\partial h$. Since a complex with a horizontally and vertically simplified basis immediately gives rise to a curve with trivial local system over $\sRhat$, the fact that this local system decoration is nontrivial means that this complex does not admit a basis that is both horizontally and vertically simplified.

This complex over $\sRhat$ on its own is not the $\sU=0$ restriction of a knot-like complex over $\sR$, as there is no way to add diagonal arrows such that $\partial^2 = 0$ when working over $\sR$. However, it is possible to construct a complex over $\sR$ whose $\sU=0$ reduction has this complex as a direct summand. Such a complex, taken from Example P in \cite{Popovic}, is shown in Figure \ref{fig:local-system}. The complex mod $\sU$ determines the three curve components and the decoration $\bchainhat = \bchain|_{\sU = 0}$, which consists of the local system intersection point for each non-primitive component. Note that since the two non-primitive curves are homotopic to each other and thus bound an immersed annulus in minimal position, we must introduce extra self-intersection points to put the curve in almost simple position and guarantee that the minus complex can be captured by choosing a bounding chain $\bchain$. One can check that including the remaining self-intersection points shown in the figure each of which is degree 1, along with the points in $\bchainhat$ represents the complex correctly over $\sR$.
\end{example}

Our final example is an interesting knot-like complex described by Popovi{\'c}, who gave some conditions on the algebraic realizability of certain components in knot-like complexes \cite{Popovic}. Once again it is unknown whether the knot Floer complex for any knot in $S^3$ has this as a summand.
\begin{figure}
$\begin{array}{l}
\partial a = Z^4 g \\
\partial b = WZc + Z^2f \\
\partial c = WZa + Z^2d \\
\\
\\
\partial d = WZ^3g \\
\partial e = W^2c + WZf + W^2 Z^4 g + Z^3j \\
\partial f = W^2 a + WZe \\ 
\\
\\
\partial g = 0\\
\partial h = W^4b + W^3Ze + WZ^4k + Z^5m \\
\\
\partial i = WZj + Z^5o \\
\partial j = 0 \\
\\
\\
\partial k = W^2 j + Z^2 l \\
\\
\\
\partial l = 0 \\
\partial m = W^5 g + WZ l \\
\\
\partial n = W^5i + W^4Zk + WZ^3q + Z^4t \\
\partial o = 0 \\
\\
\\
\partial p = WZ r + Z^2u\\
\partial q = W^3 l + W^4Z^2 o + WZp + Z^2s \\
\partial r = W^3 Z o \\
\\
\\
\partial s = W^2 r + WZu \\
\partial t = W^2 p + WZs \\
\partial u = W^4 o
\end{array}$\hspace{1 cm}
\raisebox{-78 mm}{\includegraphics{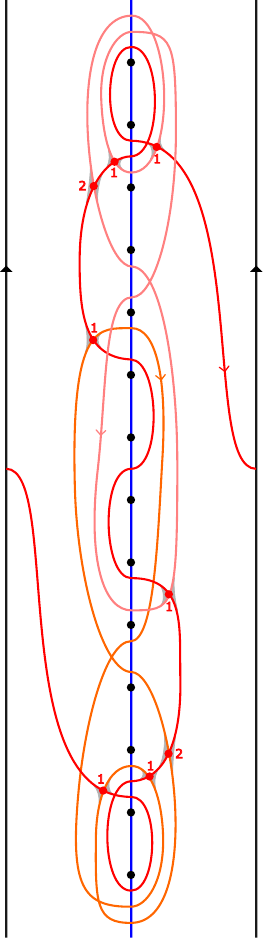}}
\caption{A knot-like complex and corresponding decorated immersed multicurve, which has three components. Self-intersection points are labeled with their degree, and those included in the subset $\bchain$ are indicated by red dots.}
\label{fig:popovic}
\end{figure}
\begin{example}\label{ex:popovic}
The complex in Figure \ref{fig:popovic} is Example D in \cite{Popovic}. As the complex has a horizontally and vertically simplified basis it is simple to construct the immersed multicurve $\Gamma$, which has three components, and note that $\bchainhat = 0$. We then systematically consider diagonal arrows, in order of increasing length, and add self-intersection points to $\bchain$ as needed to capture these diagonal arrows. We leave it as an exercise to see that when $\bchain$ is the collection of six degree 1 self-intersection points shown in the figure the decorated curve $(\Gamma, \bchain)$ represents the correct complex. Gradings are not indicated in the figure, but the grading function on $\Gamma$ is determined once the degrees of the indicated intersection points are known.

An interesting feature of this complex is that each generator is connected by a sequence of arrows to $\sU$ times itself, and as a result when viewed as a complex over $\F$ (with generators $W^a Z^b x$ for $a,b \ge 0$ and $x$ a generator of the complex over $\sR$) each Alexander grading gives a single infinitely generated complex that does not decompose as a direct sum. In contrast, for the complexes in Example \ref{ex:first-examples} restricting to a single Alexander grading and viewing as complex over $\F$ yields a direct sum of infinitely many finitely generated complexes. Moreover, after ignoring finitely many summands with low powers of $W$ and $Z$ the complex can be written $\F[\sU] \otimes C$ for a finitely generated complex $C$. There are no known examples of knots for which the knot Floer complex does not split in this way. The fact that the complex in this example does not split into finitely generated complexes over $\F$ corresponds to a geometric feature of the corresponding curve: the sequence of (forward or backward) horizontal and vertical arrows that comes from following one full loop of the orange curve component $\gamma_1$ connects a generator $x$ to $\sU^{\wind(\gamma_1)}$ times $x$, so the complex does not split because the total winding number $\wind(\gamma_1)$ of $\gamma_1$ around the punctures is nonzero. Note that since the curve is $\Z$-gradable this is equivalent to the net rotation $\rot(\gamma_1)$ around the curve being nonzero.
\end{example}

\section{Extracting simple invariants from $\Gamma(K)$}\label{sec:simple-invariants}

Although at times it makes sense to work with the full knot Floer complex $\CFKr(K)$, or equivalently the corresponding knot-like curve $\Gamma(K)$, this is often more information than required to distinguish two knots or identify a desired feature of a knot, and it can be conceptually or computationally advantageous to focus on some partial information extracted from the full invariant. The rest of the article will be devoted to various simpler invariants that can be defined or recovered using $\Gamma(K)$. In this section we discuss how some classical knot invariants are detected by $\Gamma(K)$, namely the Seifert genus, fiberedness, and the Alexander polynomial, and we also discuss some invariants defined using knot Floer homology that are particularly simple to extract from  the immersed curve $\Gamma(K)$. More sophisticated invariants will be discussed in Section \ref{sec:concordance-invariants}, but these require more technical background covered in Section \ref{sec:subcomplexes} and Section \ref{sec:distinguished-components}.

\subsection{Genus and fiberedness} A well known result of Ozsv{\'a}th and Szab{\'o} is that the knot Floer homology of a knot $K$ detects the Seifert genus $g(K)$ \cite{OzSz:genus-bounds}. In particular, it was shown that $g(K)$ is the maximal Alexander grading in which $\HFKhat(K;s)$ is nonzero, where $\HFKhat(K)$ is the homology of the finitely generated chain complex over $\F$ obtained from $\CFKr(K)$ by setting $W = Z = 0$ and $\HFKhat(K;s$ is the summand in Alexander grading $s$. This chain complex is the Floer complex of $\widehat\Gamma(K)$ with $\mu$ in the doubly punctured strip $\overline \sS_{\bullet, \bullet}$ obtained from $\overline \sS_{w,z}$ by removing all basepoints, and the Alexander grading of a generator is given by its height. Since we assume $\widehat\Gamma(K)$ intersects $\mu$ minimally there are no bigons and $\HFKhat(K)$ is generated by the intersection points of $\widehat\Gamma(K)$ with $\mu$. It follows that $g(K)$ is simply the maximal height at which the immersed curve $\widehat\Gamma(K)$ intersects $\mu$.

It was also shown by Ghiggini \cite{Ghiggini:detects-fiberedness} and Ni \cite{Ni:detects-fiberedness} that knot Floer homology detects whether or not a knot $K$ is fibered. More specifically, $K$ is fibered if and only if the dimension of $\HFKhat(K)$ restricted to the top Alexander grading $A = g(K)$ is 1. From the discussion above, it is clear that $K$ is fibered if and only if the immersed curve $\widehat\Gamma(K)$ attains its maximum height exactly once.

\subsection{Alexander polynomial}
Knot Floer homology is a categorification of the Alexander polynomial, where the Alexander polynomial $\Delta_K(t)$ can be recovered from $\HFKhat(K)$ by taking a graded Euler characteristic. More precisely, we have
$$\Delta_K(t) = \sum_{s \in \Z} \chi(\HFKhat(K;s)) t^s,$$
where $\chi$ is the Euler characteristic with respect to the mod 2 grading. Recall that generators of $\HFKhat(K;s)$ correspond to intersections of $\widehat \Gamma$ with the segment of $\mu$ between $(0,s-\frac 1 2)$ and $(0,s+\frac 1 2)$, and mod 2 grading corresponds to the sign of the intersection point (using the orientation on $\widehat \Gamma(K)$ determined by its grading function and orienting $\mu$ upwards). Thus the coefficient of $t^s$ in $\Delta_K(t)$ is simply the algebraic intersection number of $\widehat\Gamma(K)$ with the segment of $\mu$ at height $s$.

\subsection{The distinguished curve component $\gamma_0$ and related invariants} \label{sec:gamma_0-invariants}
In the decorated multicurve $\Gamma(K)$, the underlying immersed multicurve $\Gamma$ is a collection $\{\gamma_0, \ldots, \gamma_n\}$ of compact immersed curves in $\overline \sS_\bullet$. By the definition of a knot-like curve it has exactly one arc component and by convention we will always label components so that this is $\gamma_0$. It follows that the homotopy class of the arc $\gamma_0$ is an invariant of the knot $K$. We will say more about this distinguished curve component in Section \ref{sec:distinguished-components}, where we will see that it is in fact a concordance invariant. For now we will explain how several numerical invariants can be easily extracted from the shape of $\gamma_0$. In particular, we have the following:

\begin{itemize}
\item the integer $\tau(C)$ is the height of the first intersection of $\gamma_0$ with $\mu$. Here as usual we interpret height in a discrete way---that is, the height of a point on $\mu$ in $\overline \sS_\bullet$ is $n$ if it lies between marked points at $(0,n-\frac 1 2)$ and $(0,n+\frac 1 2)$. By first intersection we mean the first one reached traveling along $\gamma_0$ starting from its intersection with $\mu_{\frac 1 2}$ moving rightward.

\item the invariant $\epsilon(C)$  takes a value in $\{-1, 0, 1\}$ and records what the curve does immediately after its first intersection with $\mu$: if $\gamma_0$ turns downwards (i.e. the second intersection with $\mu$ occurs lower than the first) then $\epsilon(\gamma_0) = 1$, if $\gamma_0$ turns upwards (i.e. the second intersection with $\mu$ occurs higher than the first) then $\epsilon(\gamma_0) = -1$, and if the $\gamma_0$ continues straight (i.e. there is no second intersection with $\mu$) then $\epsilon(\gamma_0) = 0$. Note that for symmetric knot-like complexes, which includes knot Floer complexes, $\epsilon$ can only be 0 if $\tau = 0$.

\item the integer $\nu(C)$ is closely related to $\tau(C)$ and also measures the height at which $\gamma_0$ first intersects $\mu$ but in a different way. The first intersection occurs somewhere between $(0, \tau(C) - \frac 1 2)$ and $(0, \tau(C)+\frac 1 2)$, but if $\gamma_0$ is ``pulled tight" the intersection will lie near $(0, \tau(C)-\frac 1 2)$ if $\epsilon(C) = 1$ and near $(0, \tau(C)+\frac 1 2)$ if $\epsilon(C) = -1$. If $\epsilon(C) = 0$, so that $\gamma_0$ is homotopic to a horizontal curve, we adopt the convention that $\gamma_0$ should be pushed downward to cross $\mu$ near $(0, \tau(C)-\frac 1 2)$. We then define $\nu(C)$ to be $\frac 1 2$ plus the height of the puncture nearest the first intersection of $\gamma_0$ with $\mu$. In other words, $\nu(C) = \tau(C)$ if $\epsilon(C) \in \{0,1\}$ and $\nu(C) = \tau(C)+1$ otherwise.

\item for each positive integer $i$, the integer valued invariant $\phi_i(C)$ is the signed count of right-segments of $\gamma_0$ of length $i$ (that is, segments of $\gamma_0$ between successive intersections with $\mu$ at heights differing by $i$). Right-segments count positively if they are oriented downwards and negatively if they are oriented upwards.
\end{itemize}
The knot invariant $\tau$ was defined by Ozsv{\'a}th and Szab{\'o} in \cite{OzSz:4ball-genus}, $\epsilon$ was defined by Hom in \cite{Hom:CFK-and-smooth-concordance}, $\nu$ was defined by Ozsv{\'a}th and Szab{\'o} in \cite{OzSz:rational-surgeries}, and $\phi_i$ was defined by Dai, Hom, Stoffregen and Truong  in \cite{DHST:more-concordance}. Each invariant was defined using the algebraic structure of $\CFKr$, but they are indeed equivalent to the definitions given above using $\gamma_0$.

\begin{proposition}
The definitions of $\tau$, $\epsilon$, $\nu$, and $\phi_i$ above using $\gamma_0$ recover the corresponding knot invariants.
\end{proposition}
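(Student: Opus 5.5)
Since all four invariants are determined by the $\sU=0$ reduction $\CFKrhat(K)$, I will work entirely over $\sRhat$ with the knot-like curve $\hat f(\CFKrhat(K)) = (\Gamma,\bchainhat)$. The key input is the standard characterization of each invariant in terms of a basis: $\tau$, $\epsilon$ and $\nu$ via Hom's description in a horizontally and vertically simplified setting, and $\phi_i$ via the definition of Dai--Hom--Stoffregen--Truong in terms of the horizontally simplified basis of the standard complex. The first step is to reduce to the case where $\CFKrhat(K)$ splits as a direct sum $C_0 \oplus C'$. Here $C_0$ is the summand supporting the vertical and horizontal homology, and $C'$ is acyclic in both directions. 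Under Theorem~\ref{thm:bijection}, this splitting corresponds to $\gamma_0$ versus the closed components. The point is that $\gamma_0$ is primitive, being the unique arc, so its local system decoration is trivial and $g(\gamma_0)=CF(\gamma_0,\mu)$ has a basis that is simultaneously horizontally and vertically simplified: the intersection points of $\gamma_0$ with $\mu$, linked alternately by left and right segments. This summand is a ``staircase-like'' chain $x_0, x_1, \dots, x_{2n}$, where $x_0$ is the first intersection.

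With this basis in hand, the next step is to check each invariant. The vertical homology generator is the element with no incoming or outgoing vertical arrow. Following $\gamma_0$ from the left boundary, the first point on $\mu$ has no left segment, since it connects to the left boundary. Hence that point generates vertical homology, and $\tau$ equals its Alexander grading, which is its height; this is the Ozsváth--Szabó characterization of $\tau$ as minus the grading of the vertical generator under the normalization $\gr_w=0$. For $\epsilon$, I would use Hom's definition: $\epsilon=1$ if the vertical generator is the source of a horizontal arrow, $-1$ if it is the target of one, and $0$ if it generates horizontal homology too. The right segment leaving $x_0$ reaches $x_1$. Comparing heights, and using that a $W^k$ arrow from $x$ to $y$ means $A(y)=A(x)-k$, the direction of the arrow is determined by whether the curve turns down or up. If there is no right segment, then $x_0$ also generates horizontal homology. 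The formula for $\nu$ then follows from Hom's identity that $\nu=\tau$ when $\epsilon\in\{0,1\}$ and $\nu=\tau+1$ when $\epsilon=-1$. Finally, $\phi_i$ counts horizontal arrows of length $i$ in the standard complex, signed by their direction relative to the chain ordering; right segments of length $i$ are exactly these arrows, and the orientation of $\gamma_0$ encodes the direction.

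The main obstacle is justifying that the closed components do not interfere. Nontrivial local systems, or the absence of a global simplified basis, could a priori mix $\gamma_0$ with other components, and the cited definitions ($\phi_i$ especially) are phrased up to local equivalence. I would handle this by noting that Theorem~\ref{thm:bijection} gives the decomposition of $\CFKrhat$ as $g(\gamma_0)\oplus g(\Gamma\setminus\gamma_0)$, since the Floer complex splits over components. The second summand has trivial horizontal and vertical homology because closed curves can be slid off $\mu$ toward either boundary. Each invariant is unchanged by adding such a summand: this is immediate for $\tau$ and $\epsilon$, and for $\phi_i$ it follows from their invariance under local equivalence over $\sRhat$. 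Sign conventions for orientation versus arrow direction would then be checked on the trefoil example.
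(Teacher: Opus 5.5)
Your overall route is the paper's. You use the basis of the $\gamma_0$ summand coming from the curve, which is simultaneously horizontally and vertically simplified. You identify the first intersection of $\gamma_0$ with $\mu$ as the vertical homology generator because it has no left segment. You then invoke Hom's simplified-basis description of $\epsilon$, Hom's identity expressing $\nu$ through $\tau$ and $\epsilon$, and the Dai--Hom--Stoffregen--Truong standard complex for $\phi_i$. Your explicit splitting $\CFKrhat(K)\simeq g(\gamma_0)\oplus g(\Gamma\setminus\gamma_0)$ is justified because a decoration $\bchainhat$ of local system type only involves self-intersections of a single component. It usefully expands what the paper compresses into ``horizontally and vertically simplified when restricted to the summand corresponding to $\gamma_0$.''

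The $\epsilon$ step as written, however, rests on two false statements that happen to cancel.
\begin{itemize}
\item Hom's convention is the reverse of yours: $\epsilon=1$ when the vertical generator is the \emph{target} of a horizontal arrow, and $\epsilon=-1$ when it is the \emph{source}.
\item $W$ has bidegree $(-2,0)$ and hence Alexander grading $-1$. So a term $W^k y$ in $\partial x$ gives $A(y)=A(x)+k$, not $A(x)-k$; the target of a horizontal arrow lies \emph{higher}.
\end{itemize}
The right-hand trefoil exhibits both. There $\partial b = Wa+Zc$, the vertical generator is $a$ with $A(a)=1>0=A(b)$, $a$ is the target of the $W$-arrow, and $\epsilon=1$.

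The correct chain is: $\epsilon=1$ iff $x_0$ is the target of a horizontal arrow from $x_1$, iff $x_1$ lies below $x_0$, iff $\gamma_0$ turns downward. This is exactly the paper's argument. Your final correspondence is right only because the two errors compensate. The trefoil check you defer to the end would have caught both, so it should be carried out rather than postponed.

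Relatedly, your parenthetical calling $\tau$ ``minus the grading of the vertical generator'' is garbled. With $\gr_w(x_0)=0$ one has $\tau=A(x_0)=-\gr_z(x_0)/2$. What you actually need, and what the paper uses, is simply that $\tau$ is the Alexander grading of the vertical homology generator in a vertically simplified basis.
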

\begin{proof}
Given a knot $K$ let $C = \CFKr(K)$.  $\tau(K)$ can be defined as the minimal $s$ for which the inclusion $(C_s) |_{W = 0} \to (C^-_s) |_{W = 0}$ induces a surjection on homology. If $C$ has a vertically simplified basis this is simply the Alexander grading of the generator of vertical homology. The immersed curve $(\Gamma, \bchain)$ determines such a basis for $C$, and the generator of vertical homology corresponds to the intersection of $\Gamma$ with $\mu$ that is connected by a segment on its left to $\mu_{\frac 1 2}$. This is precisely the first intersection of $\gamma_0$ with $\mu$, and its Alexander grading is its height.

$\epsilon(K)$ has a simple description when $\widehat C$ has a basis that is both horizontally and vertically simplified, as described in \cite[Section 3.2.1]{Hom:survey}. Since the immersed curve determines a basis for $\widehat C$ that is horizontally and vertically simplified when restricted to the summand corresponding to $\gamma_0$, this description applies. Thus $\epsilon(K) = 1$ if the generator of vertical homology is the end of a horizontal arrow, or equivalently if the first intersection of $\gamma_0$ with $\mu$ is the top of a right-segment in $\gamma_0$, meaning that $\gamma_0$ bends downward at that point. Similarly $\epsilon(K) = -1$ if the generator of vertical homology is the start of a horizontal arrow, or equivalently if the first intersection of $\gamma_0$ with $\mu$ is the bottom of a right-segment in $\gamma_0$. If neither of these is true then $\epsilon(K)=0$ and $\gamma_0$ only intersects $\mu$ once and continues rightward after this intersection.

The invariant $\nu(K)$ satisfies the relationship with $\tau$ and $\epsilon$ noted above, according to \cite[Definition 3.5]{Hom:survey} so clearly matches $\nu(C)$ defined above. Alternatively, we can compare to the original definition of $\nu$ as the minimum $s$ for which $\hat v_{s,*}$ is surjective, where $\hat v_{s,*}$ is the inclusion map of a subcomplex that will be discussed in  Section \ref{sec:v_s-maps}; the geometric description of $\nu$ will also follow from the geometric description of those maps.

The invariants $\phi_i$ were defined using the standard complex representative of $C$, which corresponds to the distinguished component $\gamma_0$ as discussed in Remark \ref{rmk:standard_complex}. Under this identification it is clear that $\phi_i(C)$ counts oriented right-segments as claimed (or equivalently, since knot Floer complexes are symmetric, oriented left-segments).
\end{proof}

Recall that $\gamma_0$ can be concisely recorded as a list of integers (see Remark \ref{rmk:curve-notation}). With this notation $\tau$ is simply the first integer in the list, $\epsilon$ is 0 if the list has only one element and otherwise records whether the second entry is greater than or less than the first, and $\phi_i$ is half the (signed) count of terms at which the sequence decreases (or increases, counted with a minus sign) by $i$. The table in Section \ref{sec:simple-invariants-examples} lists these invariants for several examples.

Even though $\gamma_0$ captures all of the other invariants, each of these numerical invariants has properties that make them valuable on their own. For example, $\tau(K)$ is additive under connected sum, so it gives a homomorphism from the smooth concordance group to $\Z$ \cite[Theorem 1.2]{OzSz:4ball-genus}. It also gives a lower bound on the slice genus and the unknotting number of $K$ \cite[Corollary 1.3]{OzSz:4ball-genus}. The invariants $\phi_i$ also give concordance homomorphisms and can be used to identify a $\Z^\infty$ summand in the subgroup of the smooth concordance group generated by topologically slice knots \cite{DHST:more-concordance}. The invariant $\epsilon$ is related to a total ordering on the set of connected symmetric knot-like curves in $\overline \sS_\bullet$ (that is, the set of possible $\gamma_0$ curves); $\epsilon$ is $1$, $0$, or $-1$ corresponds to the curve being less than, equal to, or greater than the trivial curve with respect to this ordering. Geometrically, $\gamma_0 < \gamma_0'$ if the $\gamma_0$ is not homotopic to $\gamma'$ and the first time the curves diverge $\gamma_0$ lies to the left of $\gamma'$. The ordering on $\gamma_0$ curves determined by $\epsilon$ will be discussed further in Section \ref{sec:distinguished-components}. The invariant $\nu$, which can also be defined using the maps $\hat v_s$ discussed in Section \ref{sec:subcomplexes}, has a natural generalization using the maps $v_s$ that we will discuss in \ref{sec:concordance-invariants}.

\subsection{Torsion order}\label{sec:torsion-order}

The torsion order of knot Floer homology was introduced in \cite{AlishahiEftekhary:torsion} and \cite{JuhaszMillerZemke:torsion}. It is defined by considering $\HFKminus(K)$, which in our notation is the homology of the complex $C_{Z=0}$ obtained from $\CFKr$ by setting $Z = 0$. This homology, a module over $\F[W]$, splits as a copy of $\F[W]$ plus torsion summands. The torsion order is the minimum power of $W$ that kills all elements in the torsion summands, i.e.
$$\Ord(K) = \min\{ n \ge 0 | W^n \cdot \Tor( H_*(C_{Z=0} )) = 0 \}.$$

Consider the decorated curve $\Gamma(K)$ representing $\CFKr$. By homotoping the curve if necessary, we may assume that any local system intersection points lie to the left of $\mu$, so that the complex determined by $\Gamma(K)$ is naturally equipped with a horizontally simplified basis. The generators can be labelled $\{x_0, \ldots, x_{2n}\}$ such that $x_0$ is the generator of horizontal homology and for each $i$ the generators $x_{2i-1}$ and $x_{2i}$ correspond to points on $\mu$ connected by a right arc of $\Gamma(K)$ of some length $k_i$. The differential in $C_{Z=0}$ is given by $\partial(x_{2i-1}) = W^{k_i} x_{2i}$ and $\partial(x_j) = 0$ for $j$ even, and thus
$$H_*(C_{Z=0} ) \cong \F[W] \oplus \F[W]/W^{k_1} \oplus \cdots \oplus \F[W]/W^{k_n}.$$
Clearly the torsion order is the maximum of the integers $k_i$.

\begin{proposition}
The torsion order $\Ord(K)$ is the maximum length of a right arc in $\Gamma(K)$. \qed
\end{proposition}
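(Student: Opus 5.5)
The plan is to compute $H_*(C_{Z=0})$ directly from the Floer complex of a suitably positioned representative of $\Gamma(K)$ with $\mu$, using Theorem \ref{thm:bijection}, which says that $g(\Gamma(K)) = CF(\Gamma(K),\mu)$ is chain homotopy equivalent to $\CFKr(K)$. Since $\Ord(K)$ depends only on the homotopy type of $C_{Z=0}$, I am free to pick the most convenient representative of $\Gamma(K)$.

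\textbf{Positioning the curve.} First homotope $\Gamma(K)$ within its equivalence class so that every local system intersection point in $\bchainhat$ sits to the left of $\mu$. This is possible because the crossing region of a non-primitive component can be slid along the component to the left side.

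\textbf{Reducing to right bigons.} Setting $Z=0$ discards every generalized bigon that covers a $z$ marked point. Any bigon with a nontrivial $\sU$ weight covers both a $w$ and a $z$ point, so it is also discarded. This includes bigons with false corners at positive-degree points of $\bchain$, since those carry a positive power of $\sU$. The only remaining contributions are bigons lying entirely to the right of $\mu$. I would argue that these are exactly the right segments of $\Gamma$, with no false corners, because all degree $0$ decorations have been moved to the left. Each right segment joining points at heights differing by $k$ encloses $k$ $w$-points and so contributes $W^k$. Minimal intersection with $\mu$ rules out any other bigons.

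\textbf{Reading off the homology.} Each point of $\Gamma\cap\mu$ has exactly one right-side continuation. The one continuing to the right boundary of the strip is $x_0$. The others pair off into right segments, with the differential running $x_{2i-1}\mapsto W^{k_i}x_{2i}$, where the direction is fixed by the grading and orientation. This gives
$$H_*(C_{Z=0})\cong \F[W]\oplus\bigoplus_i \F[W]/W^{k_i},$$
whose torsion is annihilated exactly by $W^{\max k_i}$. The lengths $k_i$ are the lengths of the right arcs of $\Gamma(K)$, which proves the claim.

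\textbf{Main obstacle.} The delicate step is the claim that no generalized bigon to the right of $\mu$ uses a false corner of degree $0$. It rests on the local system points having been moved left of $\mu$, and on the fact that the homotopy moving them there preserves the equivalence class, which is given by Theorem \ref{thm:curves-structure-theorem}.
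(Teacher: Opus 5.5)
Your proposal is correct and is essentially the paper's own argument. You slide the local system points to the left of $\mu$, so that the complex from $\Gamma(K)$ has a horizontally simplified basis whose only surviving $Z=0$ differentials are $x_{2i-1}\mapsto W^{k_i}x_{2i}$ along right arcs; this gives $H_*(C_{Z=0})\cong\F[W]\oplus\bigoplus_i\F[W]/W^{k_i}$ and hence $\Ord(K)=\max_i k_i$. (Your separate treatment of positive-degree false corners could be dropped: $C_{Z=0}$ depends only on $\widehat C$, since $\sU=WZ$ lies in the ideal $(Z)$.)
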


It was shown in \cite{AlishahiEftekhary:torsion} that $\Ord(K)$ gives a lower bound for the unknotting number of $K$. The torsion order is also involved in some bounds related to ribbon concordance and ribbon cobordisms \cite{JuhaszMillerZemke:torsion}.

\subsection{Examples}\label{sec:simple-invariants-examples}

The following table gives the values of the invariants from Sections \ref{sec:gamma_0-invariants} and Sections \ref{sec:torsion-order} for several of the example curves introduced in Section \ref{sec:examples}.
\begin{center}
\begin{tabular}{>{$}l<{$}  >{$}c<{$}   >{$}c<{$}  >{$}c<{$}  >{$}c<{$}   >{$}c<{$}   >{$}c<{$}  >{$}c<{$} >{$}c<{$}  }
\toprule \\[-.8em]
 & \gamma_0 & \tau & \epsilon & \nu & \phi_1 & \phi_2 & \phi_3 & \Ord \\ [.5 em]
 \midrule \\[-.7em]
\text{unknot} & [0] & 0 & 0 & 0 & 0 & 0 & 0 & 0\\
4_1 & [0] & 0 & 0 & 0 & 0 & 0 & 0 & 1\\
C_{2,-1}(4_1) & [0] & 0 & 0 & 0 & 0 & 0 & 0 & 2\\
RHT & [1,0,-1] & 1 & 1 & 1 & 1 & 0 & 0 & 1\\
LHT & [-1,0,1] & -1 & -1 & 0 & -1 & 0 & 0 & 1\\
C_{2,-1}(LHT) & [-2,-1,1,0,-1,1,2] & -2 & -1 & -1 & 0 & -1 & 0 & 2\\
C \text{ from Figure } \ref{fig:nontrivial-bchain} & [0,1,4,3,2,1,0,-1,-2,-3,-4,-1,0] & 0 & -1 & 1 & 3 & 0 & -1 & 3 \\
C \text{ from Figure } \ref{fig:popovic} & [-5, -7, -5, -2, 0, 2, 5, 7, 5] & -5 & 1 & -5 & 0 & -1 & -1 & 5 \\ [.3 em]
\bottomrule
\end{tabular}
\end{center}

\section{Subcomplexes, inclusion maps, and the surgery formula} \label{sec:subcomplexes}

Other invariants can be extracted from the knot Floer immersed curves, but they require more work. In particular, many involve considering certain subcomplexes of $\CFKr(K)$ and the inclusion maps of these complexes into $\CFKr(K)$. In this section we develop language for the relevant subcomplex and inclusion maps and interpret them in terms of immersed curves. This technical setup will be needed to define some of the invariants discussed in Section \ref{sec:concordance-invariants}. It also plays a key role in the Dehn surgery formula, which we discuss in Section \ref{sec:surgery-formula}. The reader may wish to skip Section \ref{sec:other-inclusion-maps} on a first reading, as it applies to less common generalizations of the usual subcomplexes covered in Section \ref{sec:v_s-maps}.

\begin{figure}
\begin{tikzpicture}
    \fill[color = gray, fill opacity = .2] (-2.75,-2.75) rectangle (.5, .5);
    \fill[color = gray, fill opacity = .3] (-2.75,-2.75) rectangle (-.5, -.5);
    \shade[left color = gray!0, right color = gray!20, draw= none] (-3, -.5) rectangle (-2.75, .5);
    \shade[left color = gray!0, right color = gray!50, draw= none] (-3, -2.75) rectangle (-2.75, -.5);
    \shade[top color = gray!50, bottom color = gray!0, draw= none] (-2.75, -3) rectangle (-.5, -2.75);
    \shade[top color = gray!20, bottom color = gray!0, draw= none] (-.5, -3) rectangle (.5, -2.75);
    \shade[top color = gray!40, bottom color = gray!0, draw= none, shading angle=-45] (-3, -3) rectangle (-2.75, -2.75);

    \tiny

    \node (za) at (0, 1) {$Z^{-1}a$};
    \node (b) at (0,0) {$b$};
    \node (Zc) at (0,-1) {$Zc$};
    
    \node (Wa) at (-1, 0) {$Wa$};
    \node (WZb) at (-1,-1) {$WZb$};
    \node (WZZc) at (-1,-2) {$WZ^2c$};
    
    \node (WWZa) at (-2.1, -1) {$W^2Za$};
    \node (WWZZb) at (-2.1,-2) {$W^2Z^2b$};
      
    \large
    \node at (-3.3,-3.3) {\color{gray} $\cdot^{\cdot^{\cdot}}$};
    \node at (-3.3,-1.7) {\color{gray} $\cdots$};
    \node at (-3.3,0) {\color{gray} $\cdots$};
    \node at (-1.7, -3.3) {\color{gray} $\vdots$};
    \node at (0, -3.3) {\color{gray} $\vdots$};
    
    \draw[->] (b) -- (Wa);
    \draw[->] (WZb) -- (WWZa);
    \draw[->] (WWZZb) -- (-3, -2);

    \draw[->] (b) --  (Zc);
    \draw[->] (WZb) --  (WZZc);
    \draw[->] (WWZZb) -- (-2.1, -2.75);
\end{tikzpicture}
\hspace{1cm}
\begin{tikzpicture}
    \fill[color = gray, fill opacity = .2] (-2.75,-2.75) rectangle (.5, .5);
    \fill[color = gray, fill opacity = .3] (-2.75,-2.75) rectangle (-.5, -.5);
    \shade[left color = gray!0, right color = gray!20, draw= none] (-3, -.5) rectangle (-2.75, .5);
    \shade[left color = gray!0, right color = gray!50, draw= none] (-3, -2.75) rectangle (-2.75, -.5);
    \shade[top color = gray!50, bottom color = gray!0, draw= none] (-2.75, -3) rectangle (-.5, -2.75);
    \shade[top color = gray!20, bottom color = gray!0, draw= none] (-.5, -3) rectangle (.5, -2.75);
    \shade[top color = gray!40, bottom color = gray!0, draw= none, shading angle=-45] (-3, -3) rectangle (-2.75, -2.75);

    \tiny
    \node (zza) at (.1, 2) {$Z^{-2}a$};
    \node (zb) at (.1,1) {$Z^{-1}b$};
    \node (c) at (.1,0) {$c$};
    
    \node (Wza) at (-1.1, 1) {$WZ^{-1}a$};
    \node (Wb) at (-1,0) {$Wb$};
    \node (WZc) at (-1,-1) {$WZc$};
    
    \node (WWa) at (-2.1, 0) {$W^2a$};
    \node (WWZb) at (-2.1,-1) {$W^2Zb$};
    \node (WWZZc) at (-2.1,-2) {$W^2Z^2c$};
      
    \large
    \node at (-3.3,-3.3) {\color{gray} $\cdot^{\cdot^{\cdot}}$};
    \node at (-3.3,-1.7) {\color{gray} $\cdots$};
    \node at (-3.3,0) {\color{gray} $\cdots$};
    \node at (-1.7, -3.3) {\color{gray} $\vdots$};
    \node at (0, -3.3) {\color{gray} $\vdots$};
    
    \draw[->] (zb) -- (Wza);
    \draw[->] (Wb) -- (WWa);
    \draw[->] (WWZb) -- (-3, -1);

    \draw[->] (zb) --  (c);
    \draw[->] (Wb) --  (WZc);
    \draw[->] (WWZb) -- (WWZZc);
\end{tikzpicture}
\hspace{1cm}
\begin{tikzpicture}
    \fill[color = gray, fill opacity = .2] (-2.75,-2.75) rectangle (.5, -.5);
    \fill[color = gray, fill opacity = .3] (-2.75,-2.75) rectangle (-.5, -1.5);
    \shade[left color = gray!0, right color = gray!20, draw= none] (-3, -1.5) rectangle (-2.75, -.5);
    \shade[left color = gray!0, right color = gray!50, draw= none] (-3, -2.75) rectangle (-2.75, -1.5);
    \shade[top color = gray!50, bottom color = gray!0, draw= none] (-2.75, -3) rectangle (-.5, -2.75);
    \shade[top color = gray!20, bottom color = gray!0, draw= none] (-.5, -3) rectangle (.5, -2.75);
    \shade[top color = gray!40, bottom color = gray!0, draw= none, shading angle=-45] (-3, -3) rectangle (-2.75, -2.75);

    \tiny

    \node (za) at (0, 1) {$Z^{-1}a$};
    \node (b) at (0,0) {$b$};
    \node (Zc) at (0,-1) {$Zc$};
    
    \node (Wa) at (-1, 0) {$Wa$};
    \node (WZb) at (-1,-1) {$WZb$};
    \node (WZZc) at (-1,-2) {$WZ^2c$};
    
    \node (WWZa) at (-2.1, -1) {$W^2Za$};
    \node (WWZZb) at (-2.1,-2) {$W^2Z^2b$};
      
    \large
    \node at (-3.3,-3.3) {\color{gray} $\cdot^{\cdot^{\cdot}}$};
    \node at (-3.3,-1.7) {\color{gray} $\cdots$};
    \node at (-3.3,0) {\color{gray} $\cdots$};
    \node at (-1.7, -3.3) {\color{gray} $\vdots$};
    \node at (0, -3.3) {\color{gray} $\vdots$};
    
    \draw[->] (b) -- (Wa);
    \draw[->] (WZb) -- (WWZa);
    \draw[->] (WWZZb) -- (-3, -2);

    \draw[->] (b) --  (Zc);
    \draw[->] (WZb) --  (WZZc);
    \draw[->] (WWZZb) -- (-2.1, -2.75);
\end{tikzpicture}
\caption{Left: The complex $C^-_0$, for $s = 0$, with the subcomplex $C_0$ shaded, where $C$ is the knot Floer complex of the right-hand trefoil from Example \ref{ex:trefoil}; Middle: the complex $C^-_{-1}$ with the subcomplex $C_{-1}$ shaded; Right: the complex $C_{-1}$ shaded as a subcomplex of $C^-_0$. In each case, restricting to the right column only gives $\widehat C^-_s$ for the appropriate value of $s$ and restricting to the light gray portion gives $\widehat C_s$.}
\label{fig:Cs_complex}
\end{figure}

\subsection{The inclusion maps $v_s$}\label{sec:v_s-maps}
Let $C$ denote a knot-like complex over $\sR$, and suppose $\{x_i\}_{i=1}^n$ is a set of generators over $\sR$. Let $C_s$ denote the subcomplex of $C$ in Alexander grading $s$, so that $C = \bigoplus_{s\in\Z} C_s$. Note that $C_s$ is a module over $\F[\sU]$ and is generated over $\F[\sU]$ by $\{W^{a_i} Z^{b_i} x_i\}_{i=1}^n$ where $a_i = \max\{0, A(x_i)-s\}$ and $b_i = s + a_i - A(x_i)$.

It is sometimes useful to allow negative powers $Z$ by considering $C \otimes \F[Z, Z^{-1}]$, which we will denote $C^-$. The subcomplex of $C^-$ in Alexander grading $s$ is denoted $C^-_s$. Although $C^-$ is not finitely generated over $\sR$, each $C^-_s$ is a finitely generated complex over $\F[\sU]$ with generators $\{Z^{s - A(x_i)} x_i\}_{i=1}^n$. Note that $C_s$ is clearly a subcomplex of $C^-_s$, and the inclusion map is denoted
$$v_s: C_s \into C^-_s.$$
The complexes $C^-_0$ and $C^-_{-1}$ when $C = \CFKr(RHT)$ are shown in Figure \ref{fig:Cs_complex}, and the shaded portions are the complexes $C_0$ and $C_{-1}$. In each case $C^-_s$ is generated over $\F[\sU]$ by the elements in the rightmost column, and $C_s$ is generated by the elements in the lightly shaded band.

There is a clear bijection between the generating sets of $C_s$ and $C^-_s$, and $v_s$ takes each generator to a power of $\sU$ times the corresponding generator. The map $v_s$, or more precisely the induced map $v_{s,*}$ on homology, is used to define several numerical invariants associated to a knot-like complex as we will see in Section \ref{sec:concordance-invariants}. In the example in Figure \ref{fig:Cs_complex} the map $v_0$ takes the generator $Wa$ of $C_0$ to the $Wa = \sU \cdot (Z^{-1}a)$, or $\sU$ times the corresponding generator of $C^-$, while $v_0$ takes the generators $b$ and $Zc$ to themselves. Since $Z^{-1}a$ generates the homology of $C^-$, the induced map $v_{0,*}$ is not surjective but has image $\sU \cdot H_*(C^-_0)$. 

The map $v_s$ has an analog when working over $\sRhat$; the map of $\F$-vector spaces $\hat v_s: \widehat C_s \to \widehat C^-_s$ is obtained from the map $v_s$ of $\F[\sU]$ modules by setting $\sU = 0$. Equivalently, $\hat v_s$ is the composition of the projection of $C_s$ to the span of the generators $\{ Z^{s-A(x)} x | A(x) \le s\}$ followed by inclusion into $\widehat C^-_s$. In the example in Figure \ref{fig:Cs_complex} the map $\hat v_0$ takes $Wa$ to 0 and acts as the identity on the other two generators.

\begin{remark} Note that the subcomplexes $C^-_s$ and $C^-_{s'}$ are isomorphic for any choice of $s$ and $s'$, with the isomorphism given by multiplying by the appropriate power of $Z$. For this reason, it is convenient to work with only one copy of $C^-_s$, namely $C^-_0$. Note that the subcomplex $C_s$ is obtained from $C^-_s$ (when both are viewed as complexes over $\F$) by restricting to generators $Z^{s - A(x_i) + k} W^k x_i$ that have nonnegative power of $Z$, but multiplying by $Z^{-s}$ gives an isomorphism between this and the subcomplex of $C^-_0$ generated by those generators $Z^{- A(x_i) + k} W^k x_i$ with power of $Z$ at least $-s$. By abuse of notation, we will often consider $C_s$ to be this subcomplex of $C^-_0$, that is we will write $C_s$ interchangeably with $Z^{-s}C_s$, with this identification understood. Figure \ref{fig:Cs_complex} shows $C_{-1}$ both as a subcomplex of $C^-_{-1}$ and as a subcomplex of $C^-_0$. Moreover, if $C = \CFKr(K)$ for a knot $K$ then $C^-_0$ isomorphic to $\CFKminus(K)$ as first defined in \cite{OzSz:knots} (recall that the original definition did not have the formal variable $Z$ and gave a filtered complex over $\F[\sU]$, see Remark \ref{rmk:WZ-notation}). Under this identification, the subcomplexes $C_s \subset C^-_0$ are precisely the subcomplexes $A_s^- \subset \CFKminus(K)$ defined when constructing the surgery formula as in \cite{OzSz:integer-surgeries}. \end{remark}

\begin{figure}
\includegraphics[scale = 1]{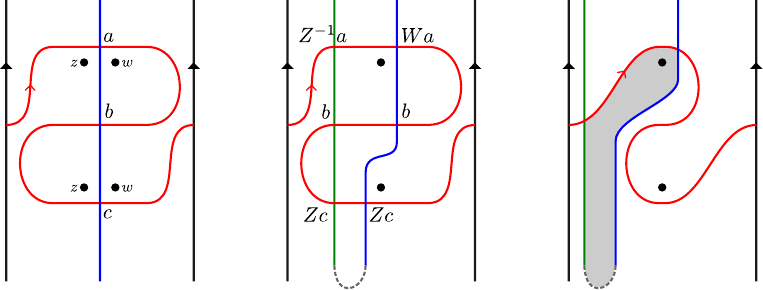}
\caption{Left: Pairing $(\Gamma, \bchain)$ with $\mu$ in $\barT_{w,z}$ gives the complex $C$ over $\sR$. Middle: Pairing $(\Gamma, \bchain)$ with $\mu_{-2\epsilon}$ (green) in $\barT_u$ gives the $\F[\sU]$ chain complex $C^-_s$, pairing $(\Gamma, \bchain)$ with $\alpha_s$ (blue) gives the $\F[\sU]$ chain complex $C_s$, and when the lower ends of $\mu_{-2\epsilon}$ and $\alpha_s$ are connected by a cap (dotted line) then counting bigons between $(\Gamma, \bchain)$ and the combined curve whose boundary covers the cap determines the map $v_s: C_s \to C^-_s$. Right: Pulling tight gives a quick way to compute the map on homology $v_{s,*}$, which in this case is multiplication by $\sU$ since the generators are connected by a single bigon (shaded) covering the marked point once.}
\label{fig:v-map-curves}
\end{figure}
The maps $v_s$ can be visualized using immersed curves. Let $(\Gamma, \bchain)$ be a knot-like curve representing the complex $C$, and assume $C$ is equipped with the basis naturally arising from this correspondence. Recall that this means the Floer complex of $(\Gamma, \bchain)$ with the vertical line $\mu$ in the doubly marked cylinder $\barT_{w,z}$  is precisely the complex $C$. To obtain the complex $C^-_0$ we may push the vertical line $\mu$ to the left of the $z$ marked points; in other words, we take the Floer complex of $(\Gamma, \bchain)$ with the line $\mu_{-2\epsilon} = \{-2\epsilon\}\times \R$ (it is clear that we may now merge the $z$ and $w$ marked points and work in $\barT_u$). There is an obvious bijection between the generators of $CF(\Gamma, \mu)$ and the generators of $CF(\Gamma, \mu_{-2\epsilon})$, but we relabel the generators by adding a power of $Z$ determined by the height of the generator. Figure \ref{fig:v-map-curves} shows the example of the right-hand trefoil, where the green vertical line is $\mu_{-2\epsilon}$. The complex $C_s$ (more precisely, $Z^{-s}C_s$) is obtained by taking the Floer complex with a curve $\alpha_s$ that follows $\mu_{-\epsilon} = \{\-epsilon\}\times\R$ upward until height $s$, moves rightward to $\mu_{\epsilon} = \{\epsilon\}\times\R$, then follows $\mu_{\epsilon}$ upward; see the blue curve in Figure \ref{fig:v-map-curves}. Once again there is a bijection between generators of this and generators of $C$, but we add nonnegative powers of $W$ or $Z$ based on how many $w$ or $z$ marked points lie between $\mu$ and $\alpha$ between the height of the generator and height $s$. Both $\mu_{-2\epsilon}$ and $\alpha_s$ extend to the infinite ends of $\barT_u$, but since $\Gamma$ is a compact curve we may truncate the curves below a sufficiently low height. We then attach the lower ends of the truncated curves by an arc as in Figure \ref{fig:v-map-curves}, resulting in a curve we will denote $\mu\alpha_s$; following the figure, we will refer to the portions of $\mu\alpha_s$ coming from $\mu_{-2\epsilon}$ and $\alpha_s$ as the green and blue portions of $\mu\alpha_s$. Now the Floer complex of $(\Gamma, \bchain)$ with $\mu\alpha_s$ is precisely the mapping cone of the map $v_s$; that is, the Floer complex consists of the direct sum of the complexes $C_s$ and $C^-_s$ along with additional differentials from generators of $C_s$ to generators of $C^-_s$ arising from bigons that involve the arc connecting the two parts of $\mu\alpha_s$, and it is not difficult to see that the map obtained by counting these bigons is precisely the map $v_s$. Indeed, assuming $(\Gamma, \bchain)$ is arranged to have only horizontal segments within the strip $[-2\epsilon, \epsilon]\times \R$, as in the figure, then there is exactly one bigon contributing to $v_s$ for each generator of $C_s$, taking that generator to the corresponding generator of $C^-_s$ with the appropriate power of $\sU$. The middle of Figure \ref{fig:v-map-curves} illustrates $v_0$ with the right-hand trefoil curve, which counts bigons connecting generators of $C_s$ (intersections of red and blue) to the corresponding generators of $C^-_s$ (intersections of red and green).

An advantage of the immersed curve description is that the induced maps on homology are unaffected by homotopy of curves. In particular, we may apply a homotopy to both the green portion and the blue portion of $\mu\alpha_s$ so that each portion intersects $\Gamma$ minimally, though we maintain the fact that the two segments are joined by a small arc far below $\Gamma$. This transformation modifies the complexes $C_s$ and $C^-_s$ and the map $v_s$ represented by the picture at the chain level, but the data is unchanged on the homology level. The right side of Figure \ref{fig:v-map-curves} performs such a homotopy, resulting in fewer intersections and simpler complexes compared to the curves in the middle of the Figure. It is clear from this picture that the homologies of $C_0$ and $C^-_0$ are both $\F[\sU]$ and the map $v_{0,*}$ is multiplication by $\sU$. The maps $\hat v_s$ and $\hat v_{s,*}$ can be interpreted in the same way if we work in the punctured surface $\barT_\bullet$; thus in Figure \ref{fig:v-map-curves} the map $\hat v_{0,*}:\F \to \F$ is trivial because there are no bigons that do not cover a puncture.

Using the map $\hat v_s$ we can revisit the definition of the invariant $\nu$ from section \ref{sec:gamma_0-invariants}, which as originally defined is the minimum $s$ for which the induced map on homology $\hat v_{s,*}$ is surjective. In terms of immersed curves, this is the minimum $s$ for which there is a bigon not covering the puncture from the $\alpha_s$ portion of $\mu\alpha_s$ to the $\mu_{-2\epsilon}$ portion of $\mu\alpha_s$, provided both portions are arranged to intersect $\Gamma$ minimally. Since only $\gamma_0$ intersects $\mu_{-2\epsilon}$, such a bigon would have to have the $\Gamma$ part of its boundary on the distinguished component $\gamma_0$. Clearly such a bigon does not exist if $\alpha_s$ crosses $\mu$ strictly lower than $\gamma_0$ first crosses $\mu$ (i.e. if $s < \tau$), such a bigon does exist if $\alpha_s$ crosses $\mu$ strictly higher than $\gamma_0$ first crosses $\mu$ (i.e. if $s > \tau$), and if $s = \tau$ such a bigon exists if $\gamma_0$ does not turn upward (i.e. if $\epsilon \neq -1$).

In addition to the maps $v_s$ there are also maps $h_s$ defined similarly reversing the roles of $Z$ and $W$. Due to the symmetry of knot Floer complexes, these carry the same information as $v_s$ for knots in $S^3$ and are less often used to define invariants, but both the $v_s$ and $h_s$ appear in the Dehn surgery formula. To define $h_s$ we allow negative powers of $W$ rather than of $Z$ and we define $h_s$ to be the inclusion map from $C_s$ to $(C \otimes \F[W, W^{-1}])_s$. The latter complex, a chain complex over $\F[\sU]$ is realized as the Floer complex of $(\Gamma, \bchain)$ with $\mu_{2\epsilon} = \{2\epsilon\}\times \R$, and the map $h_s$ counts bigons involving the capping arc when $\mu_{2\epsilon}$ and $\alpha_s$ are truncated at a sufficiently large height and joined by an arc.

\subsection{The surgery formula}\label{sec:surgery-formula} 
Ozsv{\'a}th and Szab{\'o} proved that Heegaard Floer invariants of Dehn surgeries on a knot $K$ can be computed from the knot Floer invariant of $K$ using a mapping cone formula \cite{OzSz:integer-surgeries, OzSz:rational-surgeries}. This result has a nice interpretation in terms of immersed curves: letting $S^3_{p/q}(K)$ denote the $\frac p q$-surgery on $K \subset S^3$, for each spin$^c$-structure $\spin$ of $S^3_{p/q}(K)$ the $\F[\sU]$-module $\HFminus(S^3_{p/q}(K;\spin))$ is the Floer homology in $\barT_u$ of $\Gamma(K)$ with a line of slope $p/q$. More precisely, recall that spin$^c$-structures $S^3_{p/q}(K)$ have a canonical indexing by elements of $\Z/p\Z$. For each $i \in \Z/p\Z$, let $\ell_{p,q;i}$ be a line of slope $p/q$ in $\barT_u$ (that is, a curve that lifts to a straight line of slope $p/q$ in the marked plane $\tildeT_u$) that intersects $\mu$ at the point $(0, \frac{2i + 1}{2q} - \frac 1 2)$. The surgery formula then takes the following form:

\begin{theorem}\label{thm:surgery}
For a knot $K\subset S^3$ with immersed curve invariant $\Gamma(K)$ and any $p/q\in\Q$, for each spin$^c$-structure $\spin_i$ of $S^3_{p/q}(K)$ we have an isomorphism of relatively graded $\F[U]$-modules
$$\HFminus(S^3_{p/q}(K;\spin_i)) \cong HF( \Gamma(K), \ell_{p,q;i} ),$$
where $HF$ on the right refers to Floer homology in the marked surface $\barT_u$. The Floer homology of the same curves in the punctured surface $\barT_\bullet$ recovers $\HFhat(S^3_{p/q}(K;\spin_i)$.
\end{theorem}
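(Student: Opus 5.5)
We will deduce Theorem \ref{thm:surgery} from the Ozsv{\'a}th--Szab{\'o} mapping cone formula \cite{OzSz:integer-surgeries, OzSz:rational-surgeries}, by realizing the mapping cone geometrically with the curves $\mu_{\pm 2\epsilon}$ and $\alpha_s$ of Section \ref{sec:v_s-maps}. Recall the shape of the formula for $p/q$ surgery and spin$^c$ structure $\spin_i$. It is the homology of a mapping cone
$$\bigoplus_{s} A_{\lfloor (i+ps)/q\rfloor} \xrightarrow{\ v + h\ } \bigoplus_s B_s,$$
where $A_t = C_t$ and $B_s \cong C^-_0$. The maps $v_t$ and $h_t$ send $A$ to the appropriate $B$ summands, with $h$ shifting the index.

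The first step is to treat the integer case $q=1$. We lift to a cover in which the line $\ell_{p,1;i}$ can be drawn as a sequence of vertical pieces. Concretely, we cut $\barT_u$ along $\mu_{\frac 1 2}$ and place copies of the strip side by side, arranging that the $s$-th copy of $\Gamma$ is translated up by $ps$. A line of slope $p$ is then homotopic, relative to the marked points, to a concatenation of curves. Inside each strip it follows $\alpha_{t}$ for the appropriate $t$. Near $\mu_{\frac 1 2}$ it runs along $\mu_{\pm 2\epsilon}$-type vertical segments, which pair with $\gamma_0$ to give $B_s$. It is joined to these segments by capping arcs far above and far below $\Gamma$.

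Since $\Gamma$ meets $\mu_{\frac 1 2}$ exactly once and is compact, the Floer complex of $\Gamma$ with this concatenated curve splits as $\bigoplus A_t \oplus \bigoplus B_s$. The only cross terms are bigons through the capping arcs. By the computation in Section \ref{sec:v_s-maps}, the lower caps give exactly $v_t$ and the upper caps give exactly $h_t$, after translating by $p$ in height. This reproduces the Ozsv{\'a}th--Szab{\'o} cone at the chain level. Homotopy invariance of Floer homology then identifies its homology with $HF(\Gamma(K), \ell_{p,1;i})$. The argument needs the $\mathcal A_\infty$ structure only through $m_1^{\bchain}$, and bigons with false corners behave identically, so the bounding chain causes no trouble. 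The infinite cone must also be truncated as in \cite{OzSz:integer-surgeries}; this corresponds to the observation that the line only interacts with $\Gamma$ in a bounded region.

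For general $p/q$ we use the rational formula, whose $A$ summands are indexed by $\lfloor (i+ps)/q\rfloor$. Geometrically, we pass to the $q$-fold cover and note that a slope $p/q$ line crosses $\mu$ at heights spaced by $1/q$. This is exactly how $\ell_{p,q;i}$ meets $\mu$ at $(0, \frac{2i+1}{2q}-\frac12)$. Rounding each crossing height to the marked-point lattice gives the index $\lfloor(i+ps)/q\rfloor$, and the same concatenation argument applies.

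The hat statement follows by setting $\sU=0$, that is by working in $\barT_\bullet$, since the hat mapping cone uses $\hat v$ and $\hat h$.

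The main obstacle is the relative grading together with the rational bookkeeping. One has to check that the grading shifts in the cone match the grading function on $\Gamma$ and on the concatenated line, and that the homotopy from the concatenation to a genuine straight line moves no curve across marked points in a way that changes the $\sU$-powers. I would first verify this carefully for $q=1$, using the cut convention of Section \ref{sec:gradings}. I would then handle general $q$ by a covering argument rather than redoing the grading computations.
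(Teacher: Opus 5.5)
Your proposal is correct and is essentially the paper's argument: you realize the Ozsv\'ath--Szab\'o mapping cone as the Floer complex of $\Gamma(K)$ with a zigzag curve built from the $\alpha_{s_j}$ and vertical copies of $\mu_{\frac 1 2}$ joined by capping arcs (lower caps giving $v_{s_j}$, upper caps giving $h_{s_j}$), and then observe that this curve is isotopic to the line $\ell_{p,q;i}$. The one thing to tidy up is the rational case, where no $q$-fold cover is needed: in $\barT_u$ the single line $\ell_{p,q;i}$ crosses $\mu$ at heights $\frac{i+pj}{q}+\frac{1}{2q}-\frac 1 2$, spaced by $\frac p q$ rather than $\frac 1 q$, and their discrete heights are exactly $s_j=\lfloor \frac{i+pj}{q}\rfloor$, so the paper runs the same zigzag argument uniformly for every slope.
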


The right side of Figure \ref{fig:surgery} shows the surgery formula applied to $-1$ surgery on the right-hand trefoil. The resulting Floer complex has three generators. In the punctured surface $\barT_\bullet$ there are no differentials, so $\HFhat(S^3_{-1}(RHT))$ is isomorphic to $\F^3$. If we work instead in $\barT_\sU$ we have $\partial a = \partial c = 0$ and $\partial b = \sU a + \sU c$ so $\HFminus(S^3_{-1}(RHT))$ is isomorphic to $\F[\sU] \oplus \F$.

Note that the original mapping cone formula describes $\HFminus(S^3_{p/q}(K;\spin_i))$ as the homology of the mapping cone of a particular chain map from $\mathbb A = \bigoplus_{j \in \Z} C_{s_j}$ to $\mathbb B = \bigoplus_{j \in \Z} C^-_{s_j}$, where the sequence of integers $\{s_j\}$ depends on $p, q$, and the spin$^c$-structure $\spin_i$ (precisely, we have $s_j = \lfloor \frac{i + jp}{q} \rfloor$). The map is the direct sum of the maps
$$v_{s_j} : C_{s_j} \to C^-_{s_j} \quad \text{ and } \quad h_{s_j}: C_{s_j} \to C^-_{s_{j+1}}$$
connecting the appropriate summands. This infinitely generated complex can be realized as the Floer complex of $\Gamma(K)$ with a curve $\gamma_{p,q,\spin_i}$ made from infinitely many curves $\alpha_{s_j}$ and infinitely many copies of $\mu_{\frac 1 2}$, truncated at sufficiently high and low heights and connected to each other by arcs as shown on the left side of Figure \ref{fig:surgery} (the figure is drawn in the covering space $\tildeT_u$ of $\barT_u$). The Floer homology with each $\alpha_{s_j}$ portion of the curve gives a copy of $C_{s_j}$, the Floer homology with each copy of $\mu_{\frac 1 2}$ gives a copy of $C^-_{s_j}$, and bigons involving the dotted arcs on the bottom give the $v$ maps and bigons involving the dotted arcs on the top give the $h$ maps. Theorem \ref{thm:surgery} follows from this picture by observing that the curve $\gamma_{p,q,\spin_i}$ is isotopic to a straight line of slope $p/q$ meeting $\mu$ at the appropriate height. For more details, see \cite[Section 11]{Hanselman:CFK}.
\begin{figure}
\includegraphics[scale = .8]{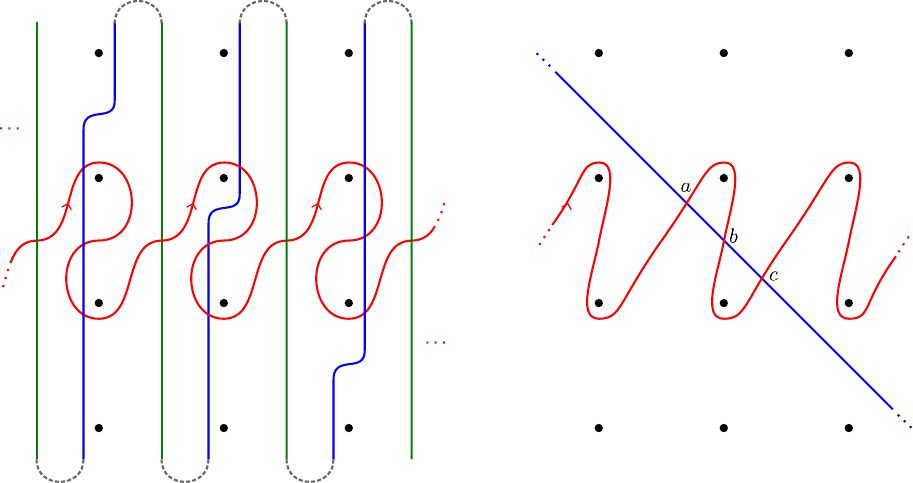}
\caption{Computing $\HFminus(S^3_{-1}(RHT))$ using immersed curves. The arrangement on the left recovers the mapping cone formula as defined by Ozsv{\'a}th and Szab{\'o} on the chain level. Pulling tight, which preserves Floer homology, gives the simpler picture on the right.}
\label{fig:surgery}
\end{figure}

\subsection{Inclusion maps of other subcomplexes}\label{sec:other-inclusion-maps}

When we draw the complex $C^-_0$ on the half of the integer lattice with non-positive $x$-coordinate as in Figure \ref{fig:Cs_complex}, the subcomplex $C_s$ is obtained by restricting to all lattice points with $y$-coordinate at most $s$. More generally, we obtain a subcomplex by restricting to any subset of the half-lattice that is closed under moving downward or leftward. Such a region is bounded above by a staircase shaped boundary, which we will keep track of by listing the corners. It is enough to record the integer given by the $x$-coordinate minus the $y$-coordinate for each corner; the sequence of such integers is strictly increasing if corners are listed from the bottom right to the top left of the staircase boundary, and this sequence determines the region since the first corner necessarily has $x$-coordinate 0. Given an increasing sequence of integers $(s_1, \ldots, s_k)$, let $R_{(s_1, \ldots, s_k)}$ denote the subset of the half-lattice whose corners determine the sequence $(s_1, \ldots, s_k)$ in this way, and let $C_{(s_1, \ldots, s_k)}$ denote the corresponding subcomplex of $C^-_0$. Figure \ref{fig:staircase-subcomplex} shows the subcomplex $C_{(-2,-1,1,3,4)}$ when $C$ is the knot Floer complex for $T_{2,5}$. We may allow the region with no corners, $R_{()}$, which is the entire half-lattice so $C_{()} = C^-_0$. The complexes $C_s$ also arise as special cases coming from regions with a single corner, where $C_s = C_{(s)}$. Regions $R_{(s_1, \ldots, s_k)}$ with $k$ odd are bounded above since their final corner is the beginning of a horizontal portion of the staircase boundary, while regions with an even number of corners are unbounded above. We may also allow infinitely many corners---for example, $R_{(-1,0,1,2,3,4,\ldots)}$ contains all points in the half-lattice below the line $x+y = -1$---however because $C$ is finitely generated it always suffices to consider a finite truncation of such an infinite list of corners. Note that as a complex over $\F[\sU]$, $C_{(s_1, \ldots, s_k)}$ is finitely generated and has a basis given by appropriate powers of $\sU$ times the basis elements of $C^-_0$. The inclusion map $C_{(s_1, \ldots, s_k)} \into C^-_0$ will be denoted $v_{(s_1, \ldots, s_k)} $.

It is not difficult to see that the complex $C_{(s_1, \ldots, s_k)}$ is realized as the Floer complex of the knot-like curve $(\Gamma, \bchain)$ representing $C$ with a nearly vertical curve $\alpha_{(s_1, \ldots, s_k)}$ that coincides with $\mu_{-\epsilon}$ below height $s_1$ and above that weaves through the punctures crossing $\mu$ once at each height in $\{s_1, \ldots, s_k\}$. Above height $s_k$ the curve coincides with $\mu_{\epsilon}$ if $k$ is odd and $\mu_{-\epsilon}$ if $k$ is even (if we allow infinite lists of corners, the line continues weaving through the punctures indefinitely). See the right side of Figure \ref{fig:staircase-subcomplex} for an example of $C_{(-2,-1,1,3,4)}$. Similar to the geometric description of $v_s$, the map $v_{(s_1, \ldots, s_k)}$ is realized by counting bigons between $(\Gamma, \bchain)$ and the curve $\mu\alpha_{(s_1, \ldots, s_k)}$ formed by connecting $\alpha_{(s_1, \ldots, s_k)}$ and $\mu_{-2\epsilon}$, truncated at sufficiently low height, by an arc. The Floer complexes of each portion of $\mu\alpha_{(s_1, \ldots, s_k)}$ give $C_{(s_1, \ldots, s_k)}$ and $C^-_0$ and bigons involving the connecting arc contribute to the map $v_{(s_1, \ldots, s_k)}$. As with the $v_s$ maps we are free to homotope each portion of $\mu\alpha_{(s_1, \ldots, s_k)}$ if we only care about the induced map on homology $v_{(s_1, \ldots, s_k),*}$. In Figure \ref{fig:staircase-subcomplex} putting the curves in minimal position makes it clear that $H_* C_{(-2,-1,1,3,4)} \cong \F[\sU] \oplus \F$ and the map $v_{(s_1, \ldots, s_k),*}$ is multiplication by $\sU$ on the tower summand and zero on the torsion summand.

Given a sequence $(s_1, \ldots, s_k)$ corresponding to the corners of a region of the lattice, with $k$ odd, there is a corresponding knot-like complex $S_{(s_1, \ldots, s_k)}$ with generators $\{x_1, \ldots, x_k\}$ and differential
$$\partial x_i = \begin{cases} 
0 & \text{ if } i \text{ even}\\
W^{s_2-s_1} x_2 & \text{ if } i=1 \\
Z^{s_k-s_{k-1}} x_{k-1} & \text{ if } i=k \\
W^{s_{i+1}-s_{i}} x_{i+1} + Z^{s_i - s_{i-1}} x_{i-1} & \text{otherwise}
\end{cases}.$$
The bigrading on $S_{(s_1, \ldots, s_k)}$ is determined by declaring that $\gr_w(x_1) = \gr_z(x_k) = 0$. This complex $S_{(s_1, \ldots, s_k)}$ is called a staircase complex; examples include the knot Floer complexes of L-space knots. Note that in particular $S_{(s_1, \ldots, s_k)}$ is the complex corresponding to the staircase shape given by the union of the non-infinite line segments in the boundary of $R_{(s_1, \ldots, s_k)}$. The knot-like curve corresponding to the complex $S_{(s_1, \ldots, s_k)}$ has a single component, which we denote $\gamma_{(s_1, \ldots, s_k)}$, that crosses $\mu$ exactly once at each of the heights $\{s_1, \ldots, s_k\}$. Note that $\gamma_{(s_1, \ldots, s_k})$ is closely related to the curve $\alpha_{(s_1, \ldots, s_k)}$, with the former obtained from the latter by truncating both ends, taking each loose end to a fixed point on $\mu_{\frac1 2}$ without crossing $\mu$, and gluing the loose ends to form a closed curve. In particular, if we work in the strip obtained from $\barT_u$ by cutting open along $\mu_{\frac 1 2}$ and slide the left endpoint of the arc resulting from $\gamma_{(s_1, \ldots, s_k)}$ arbitrarily far downward and the right endpoint arbitrarily far upward, then the Floer complex of any curve with this is chain homotopy equivalent to the Floer complex of the same curve with $\alpha_{(s_1, \ldots, s_k)}$. This, along with the discussion in Section \ref{sec:morphisms}, tells us that the complex $C_{(s_1, \ldots, s_k)}$ is homotopy equivalent both to the complex $C^\vee \otimes S_{(s_1, \ldots, s_k)}$ and to the morphism space $\Mor(S_{(s_1, \ldots, s_k)}, C)$.

\begin{figure}
\raisebox{2mm}{\includegraphics[scale = 1]{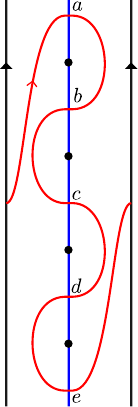}}
\hspace{1 cm}
\begin{tikzpicture}

    \draw[help lines] (-4.5,-4.5) grid (0,2.5);
    
    \fill[color = gray, fill opacity = .2] (-.85,-4.5) rectangle (.15, -1.85);
    \fill[color = gray, fill opacity = .2] (-2.85,-4.5) rectangle (-.85, .15);
    \fill[color = gray, fill opacity = .2] (-4.5,-4.5) rectangle (-2.85, 1.15);
   
   \shade[top color = gray!20, bottom color = gray!0, draw= none] (-4.5, -4.75) rectangle (.15, -4.5);
   \shade[right color = gray!20, left color = gray!0, draw= none] (-4.75, -4.5) rectangle (-4.5, 1.15);
   \shade[top color = gray!10, bottom color = gray!0, draw= none, shading angle=-45] (-4.75, -4.75) rectangle (-4.5, -4.5);

    \tiny

    \node[label={[xshift = 5mm, yshift = -3 mm] $Z^{-2}a$}] (zza) at (0, 2) {$\bullet$};
    \node[label={[xshift = 5mm, yshift = -3 mm] $Z^{-1}b$}] (zb) at (0, 1) {$\bullet$};
    \node[label={[xshift = 5mm, yshift = -3 mm] $c$}] (c) at (0, 0) {$\bullet$};
    \node[label={[xshift = 5mm, yshift = -3 mm] $Zd$}] (Zd) at (0, -1) {$\bullet$};
    \node[label={[xshift = 5mm, yshift = -3 mm] $Z^2e$}] (ZZe) at (0, -2) {$\bullet$};

    \node (Wza) at (-1, 1) {$\bullet$};
    \node (Wb) at (-1, 0) {$\bullet$};
    \node (WZc) at (-1, -1) {$\bullet$};
    \node (WZZd) at (-1, -2) {$\bullet$};
    \node (WZZZe) at (-1, -3) {$\bullet$};

    \node (WWa) at (-2, 0) {$\bullet$};
    \node (WWZb) at (-2, -1) {$\bullet$};
    \node (WWZZc) at (-2, -2) {$\bullet$};
    \node (WWZZZd) at (-2, -3) {$\bullet$};
    \node (WWZZZZe) at (-2, -4) {$\bullet$};
    
    \node (WWWZa) at (-3, -1) {$\bullet$};
    \node (WWWZZb) at (-3, -2) {$\bullet$};
    \node (WWWZZZc) at (-3, -3) {$\bullet$};
    \node (WWWZZZZd) at (-3, -4) {$\bullet$};

    \node (WWWWZZa) at (-4, -2) {$\bullet$};
    \node (WWWWZZZb) at (-4, -3) {$\bullet$};
    \node (WWWWZZZZc) at (-4, -4) {$\bullet$};

    \draw[->] (zb) -- (Wza);
    \draw[->] (Wb) -- (WWa);
    \draw[->] (WWZb) -- (WWWZa);
    \draw[->] (WWWZZb) -- (WWWWZZa);
    \draw[->] (WWWWZZZb) -- (-4.65, -3);

    \draw[->] (Zd) -- (WZc);
    \draw[->] (WZZd) -- (WWZZc);
    \draw[->] (WWZZZd) -- (WWWZZZc);
    \draw[->] (WWWZZZZd) -- (WWWWZZZZc);

    \draw[->] (zb) --  (c);
    \draw[->] (Wb) --  (WZc);
    \draw[->] (WWZb) -- (WWZZc);
    \draw[->] (WWWZZb) -- (WWWZZZc);
    \draw[->] (WWWWZZZb) -- (WWWWZZZZc);

    \draw[->] (Zd) --  (ZZe);
    \draw[->] (WZZd) --  (WZZZe);
    \draw[->] (WWZZZd) -- (WWZZZZe);
    \draw[->] (WWWZZZZd) -- (-3, -4.65);

\end{tikzpicture}
\hspace{1 cm}
\raisebox{0 mm }{\includegraphics[scale = .9]{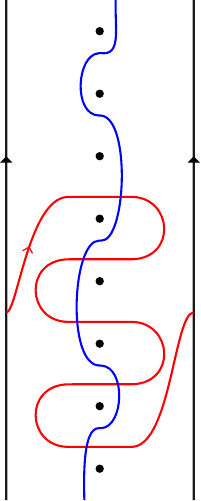} }
\caption{Left: the immersed curve $\Gamma$ corresponding to the complex $C = \CFKr(T_{2,5})$; Middle: The complex $C^-_0$ with the subcomplex $C_{(-2,-1,1,3,4)}$ shaded; Right: the complex $C_{(-2,-1,1,3,4)}$ realized as the Floer complex of $\Gamma$ and $\alpha_{(-2,-1,1,3,4)}$.}
\label{fig:staircase-subcomplex}
\end{figure}

\begin{remark}
If $k$ is odd, there is always a diffeomorphism of the cylinder $\barT_u$ permuting the marked points such that $\alpha_{(s_1, \ldots, s_k)}$ is taken to $\alpha_{s'}$ for some $s'$. By applying the same diffeomorphism to the curve corresponding to $C$, we see that $C_{(s_1, \ldots, s_k)}$ is isomorphic to $C'_{s'}$ for some other complex $C'$ (in fact, there are many such diffeomorphisms and so many choices for the modified complex $C'$). See for example Figure \ref{fig:curve-twisting-vs}. Because the diffeomorphism can be supported away from portion of $\alpha_{(s_1, \ldots, s_k)}$ corresponding to $C^-_0$, it is clear that on the level of homology the map $v_{(s_1, \ldots, s_k), *}$ for $C$ is simply $v_{s', *}$ for the modified complex $C'$. This geometric observation seems noteworthy, though it is unclear whether this is useful in geometrically extracting information related to the $C{(s_1, \ldots, s_k)}$ complexes.
\end{remark}

\begin{figure}
\includegraphics[scale = .8]{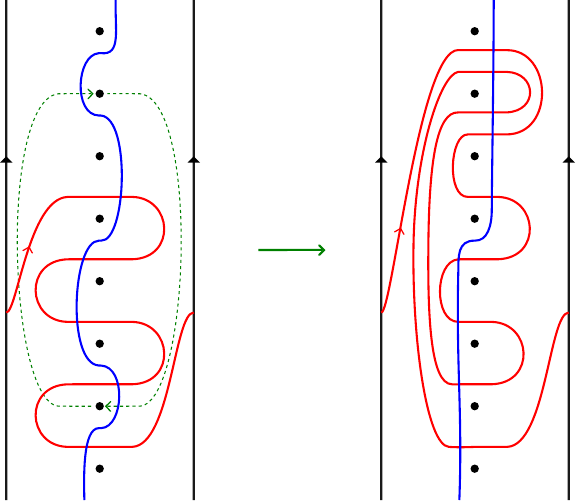}
\caption{Applying a diffeomorphism of $\barT_u$ shows that $C_{(-2,-1,1,3,4)}$ is isomorphic to $C'_1$ for a different complex $C'$.}
\label{fig:curve-twisting-vs}
\end{figure}

\section{Distinguished components and concordance invariants}\label{sec:distinguished-components}

Though the multicurve $\Gamma(K)$ in $\overline \sS_u$ may have many components, we have already seen that it has exactly one arc component $\gamma_0$ and that this component contains a wealth of useful information. In this section we expand on the distinguished component by giving a suitable generalization in the minus setting and relating these components to knot concordance. We also discuss in Section \ref{sec:group-structure} how these concepts give rise to a group structure on curves and explain, in the $\sU = 0$ setting, how to compute the immersed curves for connected sums of knots. That section is not essential to the rest of the paper.

\subsection{The invariant $\gamma_0(K)$}\label{sec:gamma_0}
Recall that for a knot-like curve $(\Gamma, \bchain)$ in $\allcurves$ the underlying immersed multicurve $\Gamma = \{\gamma_0, \gamma_1, \ldots, \gamma_n\}$ has exactly one arc component, which by convention is the one labelled $\gamma_0$. One way to extract partial information from knot Floer homology is to consider only this distinguished component. Note that the multicurve $\Gamma$, and hence the curve $\gamma_0$, depends only on the complex mod $\sU$. No decorations are needed for $\gamma_0$: the grading on $\gamma_0$ is completely determined by the curve (recall that the grading function is assumed to evaluate to $0$ at the endpoints of the component), and arc components never require local system decorations.
\begin{definition} For a knot $K \subset S^3$, $\gamma_0(K)$ is (the homotopy class of) the distinguished component of the knot-like curve in $\barT_\bullet$ representing the knot-like complex $\CFKrhat(K)$.
\end{definition}
Following Remark \ref{rmk:curve-notation}, the invariant $\gamma_0(K)$ can be recorded as a list of integers. For example, $\gamma_0$ is $[0]$ for the unknot and for the figure eight knot, $[1,0,-1]$ for the right-hand trefoil, and $[-5, -7, -5, -2, 0, 2, 5, 7, 5]$ for the complex in Example \ref{ex:popovic}.

Even though $\gamma_0(K)$ throws out all other components of the immersed multicurve and the local system decoration, it still captures a significant amount of useful information about the knot $K$. Moreover, the curve $\gamma_0(K)$ turns out to be a concordance invariant.

\begin{proposition}
The distinguished curve component $\gamma_0(K)$ is equivalent to the \emph{stable equivalence class} of $\widehat C$ as an $\sRhat$-complex as defined in \cite{Hom:survey}, or to the \emph{local equivalence class} of the knot-like complex $\widehat C = \CFKrhat(K)$ over $\sRhat$ as defined in \cite{DHST:more-concordance} (which follows the definition of local equivalence in the involutive setting from \cite[Section 2.3]{Zemke:connected-sums-involutive}, ignoring the involutive part). In particular, $\gamma_0$ is a concordance invariant.
\end{proposition}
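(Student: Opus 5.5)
The plan is to identify $\gamma_0$ with the \emph{standard complex} representative of the local equivalence class of $\widehat C$, as in \cite{DHST:more-concordance}, and then to invoke the known uniqueness of standard representatives. I work throughout over $\sRhat$ with the preferred representative $(\Gamma,\bchainhat)$ from Theorem \ref{thm:curves-structure-theorem}. Recall that $\Gamma$ is in almost simple position and $\bchainhat$ has local system type.

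\textbf{Step 1: splitting off $\gamma_0$.} Since $\bchainhat$ only contains local system points of closed non-primitive components, no generalized bigon with $\mu$ passes between different components. So the Floer complex splits as
$$\widehat C \simeq CF(\gamma_0,\mu)\oplus CF(\Gamma\setminus\gamma_0,\mu).$$
The closed components have trivial vertical and horizontal homology. To see this, slide $\mu$ to the left or right boundary of the strip, as in the argument that $g$ produces knot-like complexes: a compact closed curve can be homotoped off a line near the boundary, so its Floer homology with that line vanishes. Hence the inclusion of and projection onto $\widehat C_{\gamma_0}:=CF(\gamma_0,\mu)$ are local maps in both directions. Therefore $\widehat C$ is locally equivalent to $\widehat C_{\gamma_0}$. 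For stable equivalence one can use the same decomposition directly, since the second summand is exactly the kind of summand stable equivalence discards.

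\textbf{Step 2: $\widehat C_{\gamma_0}$ is a standard complex, and every standard complex arises this way.} Follow $\gamma_0$ from the left boundary. It meets $\mu$ at points $p_0,p_1,\dots,p_{2n}$, and consecutive points are joined alternately by a left segment and a right segment. The first segment, from $\mu_{\frac12}$ to $p_0$, lies on the left, and the last lies on the right. With the basis given by these intersection points, the differential consists of one $Z^{k}$ arrow for each left segment and one $W^{k}$ arrow for each right segment. Here $k$ is the height difference, and the direction of the arrow is determined by the orientation, i.e. by whether the segment goes up or down. This is precisely the standard complex whose defining sequence of signed lengths alternates between $Z$- and $W$-steps. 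This is the identification already used for $\phi_i$ (cf.\ Remark \ref{rmk:standard_complex}). Conversely, any standard sequence can be realized by drawing the corresponding zigzag arc, as in the construction of $\hat f$ from a simplified basis. This gives a bijection between homotopy classes of arcs $\gamma_0$ and standard complexes, because a homotopy class of arc in minimal position with $\mu$ is recorded exactly by its sequence of crossing heights.

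\textbf{Step 3: conclusion.} By \cite{DHST:more-concordance}, every local equivalence class over $\sRhat$ contains a unique standard complex. Combining this with Steps 1 and 2 shows that the map $\widehat C \mapsto \gamma_0$ is well defined on local equivalence classes and is injective. The same statement holds for Hom's stable equivalence classes, which she shows coincide with these. Concordant knots have locally (equivalently, stably) equivalent $\CFKrhat$ \cite{Zemke:connected-sums-involutive, Hom:survey}, so $\gamma_0$ is a concordance invariant.

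\textbf{Main obstacle.} The delicate step is Step 2: matching the arrow directions and sign conventions of the standard complex exactly with the orientation and grading on $\gamma_0$. One also has to check that the local system decoration cannot touch $\gamma_0$, which holds because arcs are primitive. If citing uniqueness of standard representatives feels too heavy, the fallback is a direct argument. Using Theorem \ref{thm:bijection}, one would show that any local self-map of $\widehat C_{\gamma_0}$ is an isomorphism by examining $HF(\gamma_0',\gamma_0)$ and the bigon beginning at the first intersection point. Comparing where two non-homotopic arcs first diverge, as in the ordering discussion, should then show that no local maps can exist in both directions between distinct arcs.
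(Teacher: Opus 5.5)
Your argument is correct, but it takes a different route from the paper.

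\textbf{The paper's route.} The paper decomposes $CF((\Gamma,\bchainhat),\mu)$ into one summand per component $\gamma_i$. It argues that each summand is irreducible, since otherwise the uniqueness part of Theorem \ref{thm:curves-structure-theorem} would fail. It then invokes a general fact that it calls straightforward: local or stable equivalence classes correspond exactly to homotopy types of the irreducible summand supporting homology. This is really a Krull--Schmidt-type statement, namely that a local self-map of that indecomposable summand must be a homotopy equivalence. Finally, Theorem \ref{thm:bijection} identifies the homotopy type of the $\gamma_0$-summand with the homotopy class of $\gamma_0$.

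\textbf{Your route.} You use the splitting only to show that $\widehat C$ is locally (and stably) equivalent to $CF(\gamma_0,\mu)$. You then identify these complexes with the DHST standard complexes, as in Remark \ref{rmk:standard_complex}. This recovers the existence half of the DHST classification geometrically. You import only the uniqueness half, that distinct standard sequences are locally inequivalent, to get the hard direction: locally equivalent complexes have homotopic $\gamma_0$.

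\textbf{Comparison.} Your version avoids both the irreducibility claim and the Krull--Schmidt step. The cost is reliance on a substantial external classification that exists only over $\sRhat$. The paper's argument is built so that it carries over verbatim to $\Gamma_0$ over $\sR$ in the next proposition, where no standard-complex normal form is available.

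\textbf{Minor point.} You do not need Hom to show that stable and local equivalence coincide. Stable equivalence trivially implies local equivalence: compose inclusion, the homotopy equivalence, and projection, using that the added summands are acyclic after localization. Your Step 1 already gives stable equivalence to $CF(\gamma_0,\mu)$. So the stable case follows directly from the local one.
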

\begin{proof}
Two knot-like complexes over $\sRhat$ are stably equivalent, in the sense of \cite{Hom:survey}, if they are homotopy equivalent to each other after taking the direct sum of each with acyclic complexes. In other words, if each complex is decomposed into irreducible direct summands, the summands from each complex that generate the horizontal homology are homotopy equivalent. Complexes are locally equivalent if there are appropriate chain maps between them inducing isomorphisms on vertical homology, and it is straightforward to see that this happens exactly when direct summands supporting homology in each complex are homotopy equivalent.

Let $(\Gamma, \bchainhat)$ be the standard knot-like curve in $\barT_\bullet$ representing $\widehat C$. Note that the complex determined by $(\Gamma, \bchainhat)$ is the direct sum of complexes determined by the individual components $\gamma_i$ (equipped with the restriction of $\bchainhat$ to each component). Moreover, these summands cannot decompose further as direct sums since this would violate the uniqueness of the knot-like curve representing $\widehat C$. The summand supporting homology is clearly the one corresponding to $\gamma_0$. Note also that the restriction to $\bchainhat$ is always trivial since $\gamma_0$ must be primitive---that is, the component $\gamma_0$ never carries a nontrivial local system.  The stable or local equivalence class of $\widehat C$ is equivalent to the homotopy equivalence class of this summand, which determines and is determined by the homotopy class of the curve $\gamma_0$.
\end{proof}

\begin{remark}\label{rmk:standard_complex}
In \cite{DHST:more-concordance} local equivalence classes of knot-like complexes over $\sRhat$ are studied by showing that all complexes are locally equivalent to a \emph{standard complex} which has a horizontally and vertically simplified basis and $2n+1$ generators that are connected by a single sequence of alternating horizontal and vertical arrows. Since $\gamma_0$ is an immersed curve with no local system decoration, the $\sRhat$-complex it determines has precisely this form. That is, the complex associated to $\gamma_0$ is precisely the standard complex representative of the local equivalence class of $\widehat C$. In \cite{DHST:more-concordance} standard complexes are also represented by a string of integers, though their notation is different from the convention established here for $\gamma_0$ in Remark \ref{rmk:curve-notation}; rather than recording the sequence of heights at which $\gamma_0$ crosses $\mu$, they list the sequence of height differences between successive intersections of $\gamma_0$ with $\mu$.
\end{remark}

\subsection{The invariant $\Gamma_0(K)$}
The invariant $\gamma_0(K)$ depends only on the knot Floer complex of $K$ mod $\sU$, but working with the full complex over $\sR$ gives a potentially stronger concordance invariant. Let $C$ be a knot-like complex and let $(\Gamma, \bchain)$ be a knot-like curve in $\barT_u$ representing $C$. Algebraically we still consider either the local equivalence class of $C$ or of the stable equivalence class of $C$, but now as complexes over $\sR$. Once again this amounts to the homotopy equivalence class of the irreducible direct summand of $C$ that supports the vertical homology.

We would like to interpret this in terms of curves. Note that the complex $C$ is not necessarily the direct sum of complexes determined by each component of $\Gamma$, since the bounding chain $\bchain$ may contain intersection points between components that introduce differential between the summands. In light of this, we must define a suitable notion of connected components for knot-like curves. For a decorated immersed curve $(\Gamma, \bchain)$ with fixed decoration $\bchain$, this is simple to do: we partition the components of $\Gamma$ into the maximal number of subsets subject to the constraint that for any self-intersection point $p$ of $\Gamma$ contained in $\bchain$ the two (not necessarily distinct) components of $\Gamma$ meeting at $p$ lie in the same subset, and then we define the connected components of $(\Gamma, \bchain)$ to be the multicurves corresponding to the subsets of components decorated with the restriction of $\bchain$ to self-intersection points between those components. Unfortunately the definition we need is more subtle because the complex $C$ only determines a knot-like curve up to equivalence---recall that the multicurve $\Gamma$ is well-defined up to homotopy but the bounding chain $\bchain$ is only defined up to an (rather abstract) notion of equivalence in the immersed Fukaya category, it is not well-defined as a subset of self-intersection points. For instance, in the knot-like curve in Example \ref{ex:equivalent-bchains} the bounding chains $\bchain = \emptyset$ and $\bchain = \{p\}$ are equivalent, but with the definition above $(\Gamma, \emptyset)$ has two connected components while $(\Gamma, \{p\})$ has one. To account for this, we must first choose a particular representative $(\Gamma, \bchain)$ of the equivalence class of curves that has the maximal number of connected components. With this terminology in place, we define $\Gamma_0(C)$ to be the connected component that supports the vertical homology, which is necessarily the connected component that contains the distinguished curve component $\gamma_0$.

\begin{definition} For a knot-like complex $C$, $\Gamma_0(C)$ is the connected component of the corresponding knot-like curve containing the distinguished curve component $\gamma_0(C)$. $\Gamma_0(C)$ is also a knot-like curve in $\barT_{u}$ and is defined up to equivalence of knot-like curves. For a knot $K \subset S^3$, we let $\Gamma_0(K) = \Gamma_0( \CFKr(K) )$.\end{definition}

For the complexes in Example \ref{ex:first-examples} $\Gamma_0$ consists only of the distinguished curve $\gamma_0$ since $\bchain$ is trivial in each case. In Example \ref{ex:nontrivial-bchain} $\Gamma_0$ again only has a single curve component though it is decorated with the bounding chain $\bchain$ (recall that $\bchain$ was uniquely determined by the curve, so in this case $\Gamma_0$ does not contain more information than $\gamma_0$). In Example \ref{ex:different-bchains}, each complex has a different $\Gamma_0$, even though $\gamma_0$ is the same for all of them. If $\bchain = \emptyset$ then $\Gamma_0$ contains only the component $\gamma_0$, while in the other two cases $\Gamma_0$ is the whole complex consisting of both curve components decorated with the chosen $\bchain$.

As with $\gamma_0(C)$, $\Gamma_0(C)$ captures the local or stable equivalence class of $C$ and $\Gamma_0(K)$ is a concordance invariant.
\begin{proposition}
The distinguished connected component $\Gamma_0(K)$ is equivalent to the \emph{stable equivalence class}, or equivalently the \emph{local equivalence class}, of $C = \CFKr(K)$ as a chain complex over $\sR$. In particular, $\Gamma_0(K)$ is a concordance invariant.
\end{proposition}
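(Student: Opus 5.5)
The plan follows the proof of the preceding proposition for $\gamma_0(K)$, now working over $\sR$ and with connected components of decorated curves in place of individual curve components. First I treat the algebra. Two knot-like complexes over $\sR$ are stably equivalent if they become homotopy equivalent after adding acyclic summands. By Krull--Schmidt type uniqueness for finitely generated bigraded complexes over $\sR$, this holds exactly when the irreducible summands carrying the vertical homology are homotopy equivalent. For local equivalence, take local maps $f\colon C\to C'$ and $g\colon C'\to C$, i.e.\ maps inducing isomorphisms on vertical homology. Restrict them to the homology-supporting summands $C_0$ and $C_0'$ and compose. The composite $g\circ f$ on $C_0$ is an endomorphism of an indecomposable complex that is an isomorphism on localized homology, so it is not nilpotent. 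Indecomposability over the graded local setting then makes the composite an automorphism up to homotopy, using a Fitting-lemma argument on the finite-dimensional graded pieces. Hence $C_0\simeq C_0'$, and the converse direction is immediate. I take this algebraic equivalence as the standard fact it is.

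Next I match $C_0$ with $\Gamma_0$. Fix a representative $(\Gamma,\bchain)$ of $f(C)$ with the maximal number of connected components. Pairing with $\mu$ gives $C$ as a direct sum over the connected components. This works because polygons counted by $m_1^\bchain$ can only turn at points of $\bchain$, and such points join only components within one connected component, so every generalized bigon stays inside one connected component. Each connected component $(\Gamma_i,\bchain_i)$ is itself unobstructed, since a generalized monogon lies in a single connected component. Therefore each summand is a genuine complex, and the summand containing $\gamma_0$ carries the vertical homology. It remains to show the summand $g(\Gamma_0)$ is indecomposable. Suppose instead $g(\Gamma_0)\simeq A\oplus B$ with both summands nonzero. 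Apply the bijection $f$ of Theorem \ref{thm:bijection} to $A$ and $B$ separately. The disjoint union of the resulting decorated curves, together with the other components of $(\Gamma,\bchain)$, is then a representative of $f(C)$ that is equivalent to it.

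This step is the main obstacle, for two reasons. First, the new representative must have the same underlying multicurve up to homotopy, which follows from Theorem \ref{thm:curves-structure-theorem-minus}. Second, its bounding chain must be comparable to $\bchain$, so that the splitting really increases the number of connected components. The worry is that $f(A)$ and $f(B)$ could produce curves placed differently, so the count of components is not obviously comparable. I would handle this by reducing mod $\sU$. The homotopy class of the multicurve and the local system data are unique by Theorem \ref{thm:curves-structure-theorem}, so the curve components of $f(A)\sqcup f(B)$ are exactly those of $\Gamma_0$, split into two nonempty sets. The bounding chain of the new representative then has no points joining the two sets. It therefore has strictly more connected components than $(\Gamma,\bchain)$, contradicting maximality.

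Finally, concordant knots have locally equivalent complexes $\CFKr$, by the results cited from \cite{Zemke:connected-sums-involutive} and \cite{DHST:more-concordance}. So $\Gamma_0(K)$, which determines and is determined by the homotopy type of $C_0$ via $f$ and $g$, is a concordance invariant.
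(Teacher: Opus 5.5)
Your proposal is correct and takes the same route as the paper. The stable/local equivalence class is the homotopy type of the irreducible summand carrying vertical homology, and the irreducible summands are the complexes $g$ of the connected components of a representative with the maximal number of connected components; the paper says only ``by construction'' here, and your maximality argument spells it out.

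One small imprecision: the summand $B$ has vanishing vertical and horizontal homology, so it is not knot-like and Theorem \ref{thm:bijection} does not literally apply to it. You need the more general complex-to-curve construction of \cite{Hanselman:CFK}, of which Theorem \ref{thm:bijection} is a special case, but this does not affect the argument.
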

\begin{proof}
The proof is similar to the $\sU = 0$ case. The stable or local equivalence class of $C$ is determined by the irreducible direct summand of $C$ that supports vertical homology, and by construction the irreducible direct summands of $C$ are the complexes determined by the connected components of the knot-like curve representing $C$.
\end{proof}

\subsection{Groups of immersed curves}\label{sec:group-structure}

Restricting knot-like curves to their distinguished components allows us to define a group structure on curves. More precisely, we say that two knot-like curves in $\allcurves$ in $\overline \sS_u$ are $\Gamma_0$-equivalent if they have the same distinguished connected component $\Gamma_0$ and let $\allcurves_0$ denote the set of $\Gamma_0$-equivalence classes of knot-like curves $\allcurves$; equivalently, $\allcurves_0$ can be identified with the subset of $\allcurves$ given by \emph{connected} knot-like curves in $\overline \sS_u$. Similarly, knot-like curves in $\overline \sS_\bullet$ are $\gamma_0$-equivalent if they have the same distinguished curve component $\gamma_0$ and we let $\allcurveshat_0$ be the quotient of $\allcurveshat$ modulo the $\gamma_0$-equivalence, or equivalently the subset of $\allcurveshat$ consisting of curves with a single component. The sets $\allcurves_0$ and $\allcurveshat_0$ have the structure of abelian groups, as we now describe.

We first note that the sets of knot-like complex $\complexes$ and $\hatcomplexes$ (over $\sR$ and $\sRhat$, respectively) are monoids, with operation given by the tensor product. Given the bijection between $\complexes$ and $\allcurves$, or between $\hatcomplexes$ and $\allcurveshat$, given in Theorem \ref{thm:bijection}, we can use the tensor product of complexes to define a corresponding monoid operation on knot-like curves. In analogy to the definition of $\allcurves_0$ and $\allcurveshat_0$ we define $\complexes_0$ and $\hatcomplexes_0$ to be the quotients of $\complexes$ and $\hatcomplexes$ under local equivalence (or if we prefer stable equivalence), which may also be identified with the subsets of $\complexes$ and $\hatcomplexes$ consisting of knot-like complexes that are irreducible with respect to direct sum. It is clear that the bijections $f$ and $\hat f$ in Theorem \ref{thm:bijection} restrict to bijections from $\complexes_0$ to $\allcurves_0$ and from $\hatcomplexes_0$ to $\allcurveshat_0$. It follows from \cite[Proposition 2.6]{Zemke:connected-sums-involutive} that $\complexes_0$ is an abelian group with operation $\otimes$, and the same holds for $\hatcomplexes_0$. The complex with a single generator in bigrading $(0,0)$ and vanishing differential is the identity element for the group $\complexes_0$ (or $\hatcomplexes_0$). Correspondingly, the sets of connected knot-like curves $\allcurves_0$ and $\allcurveshat_0$ also form groups; the operation is determined by $\otimes$ and the identity element is the multicurve $\Gamma$ consisting of the single curve $\gamma_{triv} = (\R/\Z)\times \{0\}$ graded by the constant function 0 and with no bounding chain. 

\begin{figure}
\includegraphics[scale = .8]{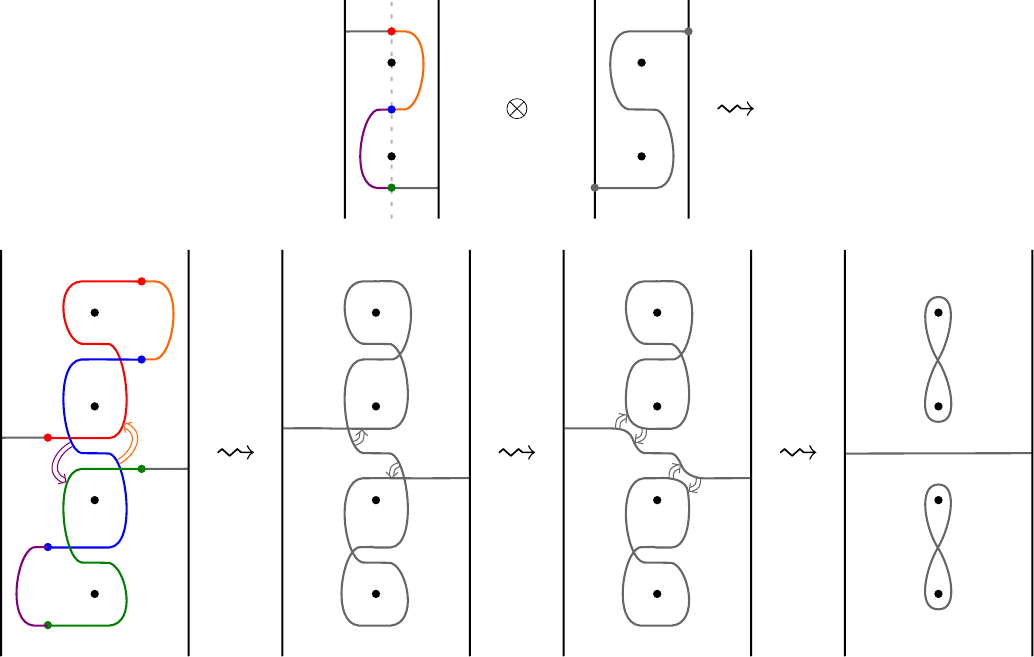}
\caption{The tensor product of two curves $\Gamma$ and $\Gamma'$ in $\allcurveshat$. The resulting multicurve with crossover arrows is formed by combining portions related to parts of $\Gamma$ as color coded in the figure; each intersection of $\Gamma$ with $\mu$ gives rise to a shifted copy of $\Gamma'$, and each right/left segment of $\Gamma$ gives rise to one right/left segment and also a crossover arrow for each right/left segment of $\Gamma'$. The rest of the figure depicts the arrow sliding algorithm in which crossover arrows are removed, at the expense of resolving two crossings, to produce a multicurve in $\allcurveshat$.}
\label{fig:tensor_product_example}
\end{figure}
In the $\sU=0$ case, it is also possible to define the group operation on $\allcurveshat_0$ geometrically. 

\begin{definition}\label{def:geometric-tensor} Given connected knot-like curves $\gamma$ and $\gamma'$ in $\allcurveshat_0$, we construct a new curve as follows:
\begin{enumerate}
\item For each $x$ in $\gamma \cap \mu$ draw a copy of $\gamma'$, shifted upwards by (the nearest integer to) the height of $x$, and decorated by the grading function $\tau'$ shifted upward by $\tau(x)$ (recall that $\tau(x)$ is an integer since $\gamma$ is in almost simple position and thus horizontal at intersections with $\mu$). Draw these copies of $\gamma'$ compressed horizontally (that is apply the map from the strip $\overline \sS_\bullet$ to itself that takes $(x,y)$ to $(\frac x 2, y)$) so that the loose ends are on $\{\pm\frac 1 4\}\times \R$.
\item For each right (resp. left) segment in $\gamma$ connecting points $x$ and $y$ in $\gamma \cap \mu$, we add a right (resp. left) segment in $[\frac 1 4, \frac 1 2) \times \R$ (resp. in $(-\frac 1 2, -\frac 1 4] \times \R$) connecting the right (resp. left) loose ends on the copies of $\gamma'$ corresponding to $x$ and $y$.
\item For each right (resp. left) segment of $\gamma$ connecting points $x$ and $y$ and each right (resp. left) segment $s$ of $\gamma'$, add a crossover arrow connecting segment $s$ in the copy of $\gamma'$ corresponding to $x$ and the segment $s$ in the copy of $\gamma'$ corresponding to $y$.
\item Run the arrow sliding algorithm mentioned in Section \ref{sec:structure-thm} to remove crossover arrows, resulting in a knot-like curve in $\allcurveshat$.
\item keep only the distinguished component $\gamma_0$ of the resulting curve to get a connected knot-like curve in $\allcurveshat_0$.
\end{enumerate}
\end{definition}
This procedure is demonstrated in Figure \ref{fig:tensor_product_example}. It is not difficult to check that train track produced by steps (1)-(3) precisely determines the tensor product of the complexes corresponding to $\gamma$ and $\gamma'$, so this definition agrees with defining $\otimes$ on $\allcurveshat_0$ via the operation $\otimes$ on $\hatcomplexes_0$.

\begin{remark}
Steps (1)-(4) described above in fact describe the monoid operation $\otimes$ on $\allcurveshat$, except that one also must describe how to handle nontrivial local systems, that is, nontrivial decorations $\bchainhat$; since $\bchainhat$ always vanishes when restricting to $\gamma_0$ this complication can be avoided when restricting the operations to $\allcurveshat_0$.
\end{remark}

The geometric description of $\otimes$ on $\allcurveshat_0$ is somewhat cumbersome due to its reliance on the arrow sliding algorithm. In particular, while there are some intuitive relationships between the geometric features of the input curves and features of the train track produced by steps (1)-(3), it is unclear what features survive step (4) which may include resolving self-intersection points. Moreover, due to the complications with arrow sliding in the minus setting (see Figure \ref{fig:minus-arrow-slides}) this description has yet to be extended to $\allcurves$ and there may not be a nice curve-based description of $\otimes$ on $\allcurves$. Thus in the minus setting, and even sometimes in the hat setting, we must resort to the bijection between curves and complexes and use algebraic methods when proving things about the group structure of curves. That said, it is an interesting exercise to prove what properties we can geometrically in the hat setting. First, we show that the group is abelian. Note that this is trivial in the algebraic setting, a curve-based proof is not difficult but requires understanding equivalences of train tracks.

\begin{proposition}\label{prop:tensor-product-commutes}
For any $\gamma$ and $\gamma'$ in $\allcurveshat_0$, $\gamma \otimes \gamma' \simeq \gamma' \otimes \gamma$.
\end{proposition}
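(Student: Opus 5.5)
The argument is geometric: I will show that the train track $T(\gamma,\gamma')$ produced by steps (1)--(3) of Definition \ref{def:geometric-tensor} is equivalent, as an object of the immersed Fukaya category of $\overline \sS_\bullet$, to the train track $T(\gamma',\gamma)$. Steps (4) and (5) are determined by this equivalence class. Theorem \ref{thm:curves-structure-theorem} says every object has a unique representative of local system type, and $\gamma_0$ is read off from that representative. So equivalent train tracks give homotopic distinguished components, and commutativity follows. (Algebraically the equivalence is just the swap isomorphism $C\otimes C'\cong C'\otimes C$; the task is to realize it by moves on train tracks.)

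\textbf{Step 1.} Put $T(\gamma,\gamma')$ in a standard form indexed by pairs. Pushing each horizontally compressed copy of $\gamma'$ close to $\mu$ shows that the intersections of $T(\gamma,\gamma')$ with $\mu$ correspond bijectively to pairs $(x,x')\in(\gamma\cap\mu)\times(\gamma'\cap\mu)$, at height $h(x)+h(x')$ and grading $\tau(x)+\tau'(x')$. These are symmetric in the two factors.

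On the right side of $\mu$, $T(\gamma,\gamma')$ has two kinds of pieces:
\begin{itemize}
\item ``inner'' right segments, one for each pair $(x,s')$ with $s'$ a right segment of $\gamma'$;
\item ``outer'' right segments, one for each right segment $s$ of $\gamma$, joining only the loose ends of the copies $\gamma'_x,\gamma'_y$ at the $\gamma'$-endpoints of $\gamma'$.
\end{itemize}
In addition there are crossover arrows, one for each pair $(s,s')$. The left side has the same description. Swapping factors exchanges the roles of inner and outer segments. The two train tracks share the same set of intersection points with $\mu$, but their segments and arrows are distributed differently.

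\textbf{Step 2.} Work in the right half-strip $(0,\tfrac12]\times\R$, and separately in the left half, relative to the common intersection points on $\mu$. Each half, with its segments and crossover arrows, encodes the horizontal (respectively vertical) differential of the tensor product in the product basis. On the right half, $T(\gamma,\gamma')$ gives $\partial_W\otimes 1 + 1\otimes \partial_W$ written in the basis $\{x\otimes x'\}$, and $T(\gamma',\gamma)$ gives the same operator in the basis $\{x'\otimes x\}$. These are literally the same operator under $x\otimes x'\leftrightarrow x'\otimes x$; only the drawing differs, namely which summands are drawn as segments and which as arrows.

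In the punctured half-strip, a configuration of segments plus crossover arrows that encodes a given $\F$-linear differential between fixed points on $\mu$ is unique up to arrow-slides and the moves of Figure \ref{fig:local-moves}. I would justify this by a local version of the arrow sliding algorithm of \cite{HRW,KWZ}: both configurations can be simplified toward the same normal form. Concretely, I would slide each crossover arrow of $T(\gamma,\gamma')$ along the inner segments until its head and tail sit at the outer ends. There a pair consisting of a segment and an arrow, both joining $(x,x')$ to $(y,y')$ via $(x,y')$ or $(y,x')$, is exchanged by the move on the right of Figure \ref{fig:local-moves}, which trades an arrow for a crossing. Doing this for every pair $(s,s')$ converts the outer/inner roles into those of $T(\gamma',\gamma)$. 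Composite arrows created along the way, per the left of Figure \ref{fig:local-moves}, should cancel in pairs, because the products $\partial_W\otimes\partial_W$ contributing them vanish in $\widehat C$: each side has at most one right segment at each point, and the mod $\sU$ differential squares to zero.

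\textbf{Step 3.} Glue the two halves back along $\mu$. The moves in Step 2 are supported away from $\mu$ and away from the left/right boundary points, so they combine into an equivalence $T(\gamma,\gamma')\simeq T(\gamma',\gamma)$. The boundary arc endpoints match because the pairs $(x_0,x_0')$ of generators of horizontal and vertical homology coincide. Grading functions match by the symmetric formula from Step 1.

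\textbf{Main obstacle.} I expect the hard part to be the bookkeeping in Step 2: showing that the required slides never push an arrow past a puncture on its right (which is forbidden), and that the composite arrows really cancel. As a fallback, I would verify equivalence by pairing. By Theorem \ref{thm:bijection}, pairing both train tracks with an arbitrary test curve $L$ computes $H_*(\Mor(\cdot,\cdot))$ for the complexes $C\otimes C'$ and $C'\otimes C$, which are isomorphic by the swap map. Floer-equivalence, which is the notion of equivalence used in the statement, would then follow directly, and a complete arrow calculus would not be needed.
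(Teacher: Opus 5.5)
Your Step 2 is where the geometric argument fails. The general claim, that a segment-and-arrow configuration in the half-strip is determined up to moves by the differential it encodes, is unproved. It amounts to a relative version of Theorem~\ref{thm:curves-structure-theorem}, plus a gluing statement you would also need.

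The concrete procedure you give also does not use an available move. The right-hand move of Figure~\ref{fig:local-moves} replaces a \emph{pair of opposite crossover arrows} by one arrow and a crossing. It does not exchange ``a segment and an arrow,'' and your procedure never creates the second arrow. Your reason for composite arrows cancelling is also incorrect. Take $\partial x_1=W^a x_2$ and $\partial y_1=W^b y_2$. Over $\sRhat$ only $WZ$ is killed, so $(\partial_W\otimes 1)(1\otimes\partial_W)$ sends $x_1\otimes y_1$ to $W^{a+b}x_2\otimes y_2\neq 0$.

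The missing idea is that no sliding is needed, because the comparison is local:
\begin{itemize}
\item A right segment at $p_{\{x,y\}}$ where $x$ (or $y$) is not an endpoint of a right segment of its factor appears in both train tracks. It comes from step (1) in one and from step (2) in the other.
\item What remains is one square for each right segment $s$ of $\gamma$ joining $x_1,x_2$ and each right segment $s'$ of $\gamma'$ joining $y_1,y_2$, on the four points $p_{\{x_i,y_j\}}$. In $\gamma\otimes\gamma'$ it consists of segments $p_{\{x_1,y_1\}}p_{\{x_1,y_2\}}$ and $p_{\{x_2,y_1\}}p_{\{x_2,y_2\}}$ with one arrow between them. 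In $\gamma'\otimes\gamma$ it consists of $p_{\{x_1,y_1\}}p_{\{x_2,y_1\}}$ and $p_{\{x_1,y_2\}}p_{\{x_2,y_2\}}$ with one arrow.
\item These squares have disjoint vertex sets, since each point of $\gamma\cap\mu$ or $\gamma'\cap\mu$ lies on at most one right segment.
\item In each square, add an opposite crossover arrow beside the existing one. It is left-to-right, hence removable, so adding it is free. Then apply the opposite-arrows move. The crossing it introduces re-pairs the endpoints into the $\gamma'\otimes\gamma$ configuration with a single arrow (Figure~\ref{fig:commutativity}).
\end{itemize}
The same works on the left. This is the paper's proof, and the obstacles you anticipate (slides past punctures, composite arrows) never arise.

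Your fallback does establish the statement, but it is the algebraic argument the paper sets aside as trivial, not a curve-based proof. As written it also overreaches. Theorem~\ref{thm:bijection} only identifies Floer homology between two \emph{knot-like} curves with $H_*\Mor$, so it gives no control over pairings of your train tracks with an arbitrary test curve $L$.

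The clean version needs no test curves. By the remark after Definition~\ref{def:geometric-tensor}, the train track from steps (1)--(3) has Floer complex with $\mu$ equal to $\widehat C\otimes\widehat C'$. By the construction of $\hat f$, its step-(4) simplification is therefore $\hat f(\widehat C\otimes\widehat C')$. The swap gives $\widehat C\otimes\widehat C'\cong\widehat C'\otimes\widehat C$, and $\hat f$ is well defined on homotopy classes. The uniqueness in Theorem~\ref{thm:curves-structure-theorem} then gives the same multicurve and hence the same $\gamma_0$.

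That route is shorter, but it rests entirely on the asserted identification of the train track with the tensor product. It shows nothing about why the curves agree, which is what the local square move provides.
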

\begin{proof}
There is a clear correspondence between $(\gamma \otimes \gamma')\cap\mu$ and $(\gamma' \otimes \gamma)\cap\mu$; in each case these intersection points correspond with pairs $\{x,y\}$ for $x$ in $\gamma\cap\mu$ and $y$ in $\gamma'\cap\mu$, we will use $p_{\{x,y\}}$ to denote the intersection point corresponding to $\{x,y\}$. If neither $x$ nor  $y$ is the end of a right segment, then $p_{\{x,y\}}$ is a loose end that is connected to the right boundary of the strip in either $\gamma \otimes \gamma'$ or $\gamma' \otimes \gamma$. If $x$ is a point in $\gamma\cap\mu$ that is not the end of a right arc in $\gamma$ and $y_1$ and $y_2$ are points in $\Gamma'\cap\mu$ connected by a right segment, then in either $\gamma \otimes \gamma'$ or $\gamma' \otimes \gamma$ the points $p_{\{x,y_1\}}$ and $p_{\{x,y_2\}}$ are connected by a right segment (coming from step (1) in $\gamma \otimes \gamma'$ and from step (2) in $\gamma'\otimes\gamma$) and no other right segments meet either point. The case of $x_1$ and $x_2$ in $\gamma\cap\mu$ connected by a right arc and $y$ in $\gamma'\cap\mu$ not on a right arc is similar. Now consider $x_1$ and $x_2$ in $\gamma\cap\mu$ that are connected by a right segment and $y_1$ and $y_2$ in $\gamma'\cap\mu$ that are connected by a right segment. In $\gamma \otimes \gamma'$ we have right segments connecting $p_{\{x_1,y_1\}}$ to $p_{\{x_1,y_2\}}$ and $p_{\{x_2,y_1\}}$ to $p_{\{x_2,y_2\}}$ and a crossover arrow connecting these two right segments, while in $\gamma' \otimes \gamma$ we have right segments connecting $p_{\{x_1,y_1\}}$ to $p_{\{x_2,y_1\}}$ and $p_{\{x_1,y_2\}}$ to $p_{\{x_2,y_2\}}$ and a crossover arrow connecting these two right segments. These two arrangements are equivalent by train track moves, since a crossover arrow moving the other direction can be added (this arrow is left-to-right) and the pair of crossover arrows can be replaced with a crossing and a crossover arrow as shown in Figure \ref{fig:commutativity}. This shows the right sides of the train tracks constructed to represent $\gamma \otimes \gamma'$ and $\gamma' \otimes \gamma$ are equivalent as train tracks, and analogous arguments apply to the left segments. Since the train tracks are equivalent, simplifying each in step (4) will result in the same element of $\allcurveshat$.
\end{proof}

\begin{figure}
\includegraphics[scale = .9]{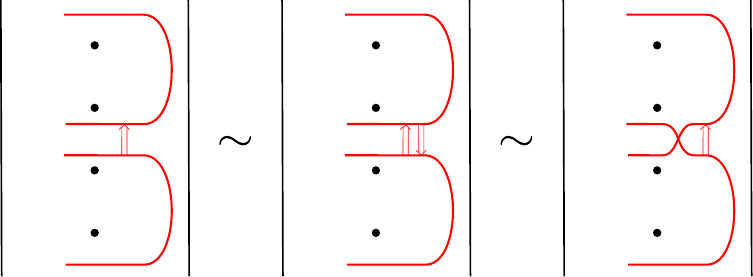}
\caption{Arrow sliding moves showing the equivalence of the train tracks constructed when computing $\gamma \otimes \gamma'$ or $\gamma' \otimes \gamma$.}
\label{fig:commutativity}
\end{figure}

We next consider inverses. In the example in Figure \ref{fig:tensor_product_example}, tensoring a curve with its mirror image gives rise to the identity curve in $\allcurveshat_0$; the fact that this always holds follows from Proposition \ref{prop:tensor-product-commutes}. For a curve $\gamma$ in $\allcurveshat_0$, let $m(\gamma)$ denote the curve obtained by reflecting $\gamma$ across the horizontal line $S^1 \times \{0\}$ and multiplying the grading function by $-1$.

\begin{proposition}\label{prop:trivial-component-with-mirror}
For any $\gamma$ in $\allcurveshat_0$, $m(\gamma)$ is the inverse of $\gamma$.
\end{proposition}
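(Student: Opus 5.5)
The plan is to combine Proposition \ref{prop:tensor-product-commutes} with a symmetry argument, rather than to simplify the train track for $\gamma \otimes m(\gamma)$ directly. Set $\delta = \gamma \otimes m(\gamma)$, an element of $\allcurveshat_0$. I aim to show that $\delta$ is fixed by $m$, and then that the only element of $\allcurveshat_0$ fixed by $m$ is the trivial curve $\gamma_{triv}$.

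\emph{Step 1: $m$ is compatible with $\otimes$.} I would show $m(\gamma \otimes \gamma') \simeq m(\gamma) \otimes m(\gamma')$. The cleanest route is algebraic. The curve $m(\gamma)$ represents the dual complex, as noted at the end of Section \ref{sec:morphisms}: reflecting the curve negates heights and gradings. We have $(C \otimes C')^\vee \cong C^\vee \otimes C'^\vee$, and local equivalence is preserved by dualizing. Since the bijection $\hat f$ of Theorem \ref{thm:bijection} restricts to $\hatcomplexes_0 \to \allcurveshat_0$, the identity follows.

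A geometric check via Definition \ref{def:geometric-tensor} is also possible but needs care. Reflecting steps (1)--(3) negates the heights of the copies of $\gamma'$ and the grading shifts. However, the reflection reverses orientation of the surface, so one must confirm that the reflected crossover arrows are again left-turning once gradings and orientations are negated. I would only sketch this and rely on the algebraic identity.

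\emph{Step 2: $\delta$ is $m$-invariant.} By Step 1 and $m \circ m = \mathrm{id}$, we get $m(\delta) \simeq m(\gamma) \otimes \gamma$. By Proposition \ref{prop:tensor-product-commutes}, $m(\gamma) \otimes \gamma \simeq \gamma \otimes m(\gamma) = \delta$. Hence $m(\delta) \simeq \delta$.

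\emph{Step 3: an $m$-invariant connected knot-like curve is trivial.} Write $\delta$ in the notation of Remark \ref{rmk:curve-notation} as $[a_1, \ldots, a_n]$. Then $m(\delta) = [-a_1, \ldots, -a_n]$. This holds because reflection preserves the starting point on $\mu_{\frac 1 2}$ and the traversal direction, and the grading normalization $0$ at the ends is preserved since $m$ negates gradings. By Theorem \ref{thm:curves-structure-theorem}, the homotopy class, and hence this list, is an invariant. So $m(\delta) = \delta$ forces $a_i = -a_i$ for all $i$, so every $a_i = 0$.

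Knot-like curves are in (almost) simple position, so $\delta$ meets $\mu$ minimally and consecutive crossings of $\mu$ lie at different heights. Otherwise the segment joining them would bound a bigon with $\mu$ containing no marked point. Therefore $n = 1$ and $\delta = [0]$, which is $\gamma_{triv}$ with grading $0$. Equivalently, $m$ reverses the total order on $\allcurveshat_0$ given by first divergence (left versus right), and the only fixed point of an order-reversing involution of a total order is its center $\gamma_{triv}$.

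\emph{Main obstacle.} I expect the main obstacle to be Step 1 if one insists on a purely geometric argument. The reflection interchanges left and right turns, and verifying that the reflected train track is exactly the one built for $m(\gamma) \otimes m(\gamma')$ requires tracking arrow directions and degrees carefully. For this reason I would fall back on the duality of complexes for that step. A secondary check is that $m$ genuinely fixes the starting point of the list, so that the "first intersection" convention in Remark \ref{rmk:curve-notation} is respected.
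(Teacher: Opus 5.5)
Your argument is correct and has the same skeleton as the paper's proof. Both show $m(\gamma\otimes\gamma')\simeq m(\gamma)\otimes m(\gamma')$. Both then combine this with $m\circ m=\mathrm{id}$ and Proposition~\ref{prop:tensor-product-commutes} to see that $\gamma\otimes m(\gamma)$ is fixed by $m$. Both conclude because $\gamma_{triv}$ is the only $m$-fixed element of $\allcurveshat_0$. The differences lie in how the two outer steps are justified.

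\textbf{Step 1.} The paper stays inside the curve calculus. It observes that mirroring both inputs mirrors the train track from steps (1)--(3) of Definition~\ref{def:geometric-tensor} and reverses the crossover arrows. It then asserts that the arrow-sliding algorithm runs identically on the mirrored track, with the reversed arrows compensating for the swap of left and right. That assertion is exactly the check you flagged as delicate.

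You instead use the identification of $m(\gamma)$ with the dual complex $C^\vee$, together with $(C\otimes C')^\vee\cong C^\vee\otimes C'^\vee$. This is rigorous. Note, however, that once you invoke duality the proposition follows at once from the standard algebraic fact that $[C^\vee]$ is the inverse of $[C]$ in $\hatcomplexes_0$. So your Steps 2 and 3 become unnecessary on that route. The paper's geometric route is what gives the three-step argument its purpose, in line with the section's stated aim of proving group properties geometrically in the hat setting.

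\textbf{Step 3.} Here your proof supplies what the paper only asserts. In the list notation $m$ negates every entry. Minimal intersection with $\mu$ forces consecutive entries to differ. Hence an $m$-fixed list must be $[0]$.
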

\begin{proof}
If both inputs to $\otimes$ are mirrored, the resulting train track from steps (1)-(3) in Definition \ref{def:geometric-tensor} is mirrored with the direction of crossover arrows reversed. One can check that the arrow sliding algorithm runs the same on the mirrored train track (the crossover arrows having reversed direction accounts for the fact that the role of left and right are switched when strands diverge), so $m(\gamma \otimes m(\gamma)) = m(\gamma) \otimes m(m(\gamma)) = m(\Gamma) \otimes \gamma$. This is equivalent to $\gamma \otimes m(\gamma)$ by Proposition \ref{prop:tensor-product-commutes}, so $\gamma \otimes m(\gamma)$ is fixed by mirroring. The identity element is the only connected immersed curve in $\allcurveshat_0$ with this property.
\end{proof}

In the hat setting, the group of connected immersed curves admits a total ordering. This ordering, introduced for the corresponding complexes by Hom in \cite{Hom:CFK-and-smooth-concordance}, can be described geometrically as an ordering from more leftward turning to more rightward turning curves.
\begin{proposition}\label{prop:ordering-on-curves}
The group $\allcurveshat_0$ admits a total ordering $<$ where $\gamma < \gamma'$ if, when $\gamma$ and $\gamma'$ are homotoped to coincide through as many intersections with $\mu$ as possible (starting from their left endpoints) and after that to intersect minimally with each other, $\gamma$ lies to the left of $\gamma'$ when they first diverge.\qed
\end{proposition}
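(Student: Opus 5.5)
Since $\allcurveshat_0$ is identified with the set of possible curves $\gamma_0$ (connected, primitive, no local system), the relation in Proposition \ref{prop:ordering-on-curves} can be phrased combinatorially. Record each curve by its list of heights as in Remark \ref{rmk:curve-notation}, or equivalently by the standard complex of Remark \ref{rmk:standard_complex}. Two such curves, pulled tight and started from their left endpoints, follow the same sequence of right and left segments until some first index where they differ. At that point there is a last common intersection with $\mu$, after which one curve turns to the left of the other. Possible differences are: one segment is longer than the other, one turns up while the other turns down, or one continues to the boundary while the other turns. First I check that this first point of divergence and its left/right side are well defined. Minimal position is unique up to homotopy in the punctured strip, and homotopy classes of arcs in $\overline \sS_\bullet$ are determined by their reduced words. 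So the relation $<$ is well defined, irreflexive, and total: two distinct curves differ in their reduced words, hence diverge, and exactly one lies on the left.

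Transitivity is a direct case check. Suppose $\gamma<\gamma'$ diverge at step $i$ and $\gamma'<\gamma''$ diverge at step $j$. If $i<j$ or $j<i$, then $\gamma$ and $\gamma''$ diverge at $\min(i,j)$ with the same side as the corresponding pair. If $i=j$, the lateral positions of the three continuations at that point are linearly ordered, because they leave a common point on $\mu$ with a common incoming direction and are in minimal position. So leftness is transitive.

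Compatibility with the group operation is the substantive step: $\gamma<\gamma'$ should imply $\gamma\otimes\delta<\gamma'\otimes\delta$. Rather than attack this through the geometric tensor product of Definition \ref{def:geometric-tensor} and the arrow sliding algorithm, which I expect to be the main obstacle, I would translate the ordering into Hom's algebraic ordering. Because $m(\gamma)$ is the inverse of $\gamma$ (Proposition \ref{prop:trivial-component-with-mirror}), $\gamma<\gamma'$ should be equivalent to $\gamma'\otimes m(\gamma)>$ the trivial curve $\gamma_{triv}$. Comparison with $\gamma_{triv}$ is exactly the sign of $\epsilon$, since $\gamma_0$ turns downward, meaning rightward, at its first crossing precisely when $\epsilon=1$.

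The plan therefore reduces to two checks. The first is the sign rule $\gamma<\gamma_{triv}\iff\epsilon(\gamma)=1$, using the proof of the earlier proposition identifying $\epsilon$. The second is the lemma that $\gamma<\gamma'$ if and only if $\epsilon(\gamma'\otimes m(\gamma))=-1$ (signs to be fixed). With these in hand, Hom's theorem in \cite{Hom:CFK-and-smooth-concordance} gives that this algebraic relation is a total order compatible with $\otimes$.

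For the lemma I would pair the curves rather than tensor them. By Theorem \ref{thm:bijection}, $\gamma'\otimes m(\gamma)$ corresponds to $\Mor(\gamma,\gamma')$, whose Floer complex is $CF(\gamma',\gamma)$ in the wrapped setting. The generator of vertical homology then corresponds to the intersection near the left endpoints, and the direction of the first divergence determines whether that generator is hit by, or hits, a horizontal arrow. Making this last identification precise, in particular showing that the positions of $\gamma$ and $\gamma'$ before the divergence contribute only canceling pairs, is where I expect the real work to lie.
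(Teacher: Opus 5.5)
\textbf{There is a genuine gap: the key lemma identifying the stated relation with Hom's $\epsilon$-ordering is false for the relation as written.} The paper gives no argument here; the statement ends in \qed. The surrounding text presents the relation as Hom's ordering transported to curves, presumably via the lexicographic description of standard complexes in \cite{DHST:more-concordance}. Your plan rests on that identification, and that is where it breaks.

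The rule in the statement does \emph{not} align first crossings. Suppose $\tau(\gamma)\neq\tau(\gamma')$. Then the two arcs first meet $\mu$ at different heights, and no homotopy makes them share an intersection with $\mu$. They diverge before reaching $\mu$, and the one crossing higher lies on the left. Your case analysis assumes ``a last common intersection with $\mu$'' and misses this case. In this case the comparison is decided by $\tau$, not by $\epsilon$.

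Consequently both your sign rule and the lemma ``$\gamma<\gamma'$ iff $\epsilon(\gamma'\otimes m(\gamma))=\pm1$'' are false for the stated relation, whichever sign you pick. A counterexample:
\begin{itemize}
\item $\gamma_{RHT}=[1,0,-1]$ has $\tau=\epsilon=1$. It passes above the marked point $(0,\frac12)$ before reaching $\mu$, so it lies to the left of $\gamma_{triv}$.
\item The symmetric knot-like curve $\delta=[0,-1,0,1,0]$ has $\tau=0$ and $\epsilon=1$. It shares the height-$0$ crossing with $\gamma_{triv}$ and then turns down, i.e.\ to the right.
\end{itemize}
So $\gamma_{RHT}<\gamma_{triv}<\delta$ even though $\epsilon(\gamma_{RHT})=\epsilon(\delta)$. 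In Hom's order, or its reverse, comparison with the identity is exactly the sign of $\epsilon$, so these two curves would lie on the same side of $\gamma_{triv}$. Your observation that $\epsilon=1$ means a right turn at the first crossing only decides the comparison with $\gamma_{triv}$ when $\tau(\gamma)=0$.

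The statement can be rescued within your framework. The stated relation is lexicographic. It compares first by $-\tau$. Only when $\tau(\gamma)=\tau(\gamma')$ does it compare by the first divergence after the common first crossing, which is exactly where your segment-by-segment analysis applies.

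Since $\tau$ is additive under $\otimes$, bi-invariance reduces to pairs with equal $\tau$. So you need your lemma only when $\tau(\gamma'\otimes m(\gamma))=0$, in the form: $\gamma<\gamma'$ iff $\epsilon(\gamma'\otimes m(\gamma))=1$. This is a special case of DHST's lexicographic theorem, read in curve language. The positive cone is then $\{\tau<0\}\cup\{\tau=0,\ \epsilon=1\}$. It is closed under $\otimes$ by additivity of $\tau$ together with Hom's theorem.

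Note that this is not Hom's order. Hom's order corresponds instead to comparing after a vertical translation that aligns the first crossings, i.e.\ comparing sequences of height differences.

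Even the restricted lemma is only sketched in your proposal. Theorem~\ref{thm:bijection} gives the homology of the morphism complexes one Alexander grading at a time. It does not give the horizontal/vertical arrow structure from which $\epsilon$ is read. You would still need to extract $\tau$ and $\nu$ of $\gamma'\otimes m(\gamma)$ from pairings (e.g.\ via the maps $\hat v_s$), or cite DHST directly.
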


Note that analogs of Proposition \ref{prop:tensor-product-commutes} and \ref{prop:trivial-component-with-mirror} hold for $\allcurves$, though as noted above one must appeal to the corresponding facts for $\complexes$ to prove them. Proposition \ref{prop:ordering-on-curves} does not extend to $\allcurves$, as the resulting ordering is only a partial ordering.

Because $\CFKr(K)$ satisfies a Kunneth formula---$\CFKr(K_1 \# K_2) \cong \CFKr(K_1) \otimes \CFKr(K_2)$---the map from the smooth concordance group to $\complexes_0$ given by the local equivalence class of $\CFKr(K)$ is in fact a homomorphism, and thus there is also a homomorphism from the smooth concordance group to $\allcurves_0$. In Section \ref{sec:concordance-invariants} we will see that many numerical invariants can be extracted from a curve in $\allcurves_0$ (or from a complex in $\complexes_0$), which are necessarily concordance invariants. In some cases these numerical invariants of knot-like curves actually give homomorphisms from $\complexes_0$ to $\Z$, and thus the corresponding concordance invariants give concordance homomorphisms.

\section{Concordance invariants extracted from $\Gamma_0$.}\label{sec:concordance-invariants}

In Section \ref{sec:gamma_0-invariants} we saw that many numerical invariants can be extracted from the distinguished curve component $\gamma_0(K)$, including the Ozsv{\'a}th-Szab{\'o} $\tau$ invariant, Hom's $\epsilon$ invariant, the related $\nu$ invariant, and the $\phi_i$ invariants. Since we have now seen that $\gamma_0(K)$ is a concordance invariant, the same is true of all of these as well. In this section we survey further concordance invariants that can be extracted from the minus theory version of the distinguished component, $\Gamma_0(K)$, and where possible we give geometric interpretations of them. Note that these invariants rely essentially on the full knot-like curve over $\sR$, and in particular on the bounding chain $\bchain$ in the decorated curve $\Gamma(K) = (\Gamma, \bchain)$ associated to the knot. This allows us to distinguish, for instance, the three knot-like complexes in Example \ref{ex:different-bchains} that restrict to the same complex over $\sRhat$, but it also makes computing the invariants harder both algebraically and geometrically. We caution that, unlike in the hat case, the geometric description of these invariants is still informal and in some cases somewhat speculative, we believe the descriptions here provide useful intuition for these invariants but more work needs to be done to make the geometric descriptions precise and see how useful they will be for studying these invariants.

\subsection{Invariants related to the $v_s$ maps}\label{sec:invariants-from-v_s}

While $\gamma_0$, which depends only on the $\sU = 0$ reduction $\widehat C$ of a knot-like complex $C$, is in general easier to compute and easier to work with than $\CFKr$, more information can be gleaned by working over $\sR$. In particular, the maps of $\F[\sU]$ complexes $v_s: C_s \to C^-_0$, or more precisely the induced maps on homology $v_{s,*}: H_* C_s \to H_* C^-_0 \cong \F[\sU]$, can be used to define invariants of the homotopy class of $C$. Note that since we only rely on the maps at the level of homology we may restrict to the summand of $C$ that supports its homology; that is, these invariants depend only on the distinguished component $\Gamma_0(C)$. The maps $v_{s,*}$ give rise to the following numerical invariants.
\begin{itemize}
\item For each integer $s$ we define the nonnegative integer $V_s(C)$ to be $\rank_\F(\coker v_{s,*})$ \cite{OzSz:integer-surgeries}.

\item We define $\nu^-(C)$ analogously to $\nu(C)$ using $v_{s,*}$ instead of $\hat v_{s,*}$. That is, 
$$\nu^-(C) = \min \{s | v_{s,*} \text{ is surjective}\}.$$
In other words, $\nu^-(C)$ is the minimum $s$ such that $V_s = 0$. Note that an analogous $\nu^+$ was first defined in \cite{HomWu}, and $\nu^-$ is equivalent to $\nu^+$ by \cite[Proposition 2.13]{OSS:upsilon}.

\item There are also invariants $H_s$ defined similarly using the maps $h_s$ instead of $v_s$, but we will not discuss them. For \emph{symmetric} knot-like complexes $H_s$ carries no further information since $H_s(C) = V_{-s}(C)$.
\end{itemize}

Note that $H_* C_s$ is always isomorphic to a single $\F[\sU]$ tower plus some number of $\sU$-torsion summands; this follows from the fact setting $\sU = 1$ in $C_*$ (which corresponds to ignoring the marked points in $\barT_u$) gives rise to rank 1 homology $H_* C_s$ (since $\alpha_s$ and $\Gamma$ intersect once algebraically). Also, $v_{s,*}$ cannot be the zero map but it necessarily restricts to the zero map on $\sU$-torsion summands, so we must have that the restriction to the tower summand is multiplication by $\sU^{V_s}$.

For symmetric knot-like complexes $C$ (i.e. those arising from knots) the invariants $V_s$ and $\nu^-$ satisfy the following properties:

\begin{itemize}
\item The sequence $\{V_s(C)\}$ is non-increasing \cite[Lemma 2.4]{NiWu}; 
\item $V_s (C) > 0$ for all $s < 0$ and thus $\nu^-(C) \ge 0$ with equality if and only if $V_0(C) = 0$ \cite[Proposition 2.3]{HomWu};
\item $\nu(C) \le \nu^-(C)$, and the difference $\nu^-(C) - \nu(C)$ can be made arbitrarily large \cite[Theorem 1]{HomWu};
\item If $C = \CFKr(K)$ for a knot $K$ then $\nu^-(C) \le g_4(K)$, where $g_4(K)$ is the 4-ball genus of $K$ \cite[Corollary 7.4]{Ras:knot-floer}.
\end{itemize}

For the rest of this section we will give describe a geometric procedure for extracting the invariants $V_s$ and $\nu^-$ from curves, using the geometric description of $v_{s,*}$ described in Section \ref{sec:v_s-maps}. This interpretation has not been described previously, but it is simply a translation of the algebraic definitions into curve pictures. We summarize the procedure informally below and illustrate it through examples, this procedure should be viewed as a heuristic for computing the invariants. The usefulness of this heuristic remains to be seen; it is likely to be useful in sufficiently nice examples, but unfortunately the geometric interpretation is not as clean as it is for $\nu$. The main difference is that the homology $H_* C_s$ is not always easy to read off of an immersed curve picture since bigons that cover the marked points contribute to the differential but cannot be removed by homotopy of the curves (in contrast, in the punctured surface $\barT_\bullet$ we can remove all bigons by putting curves in minimal position, so the intersection points of $\alpha_s$ and $\Gamma$ are precisely the generators of $\widehat H_* C_s$). It is still the case that elements of homology are represented by cycles in the Floer chain complex, which correspond to $\F[\sU]$-linear combinations of intersection points of $\Gamma$ with $\alpha_s$, and one of these cycles generates the tower in $H_* C_s$. The difficult step is identifying the intersection point(s) corresponding to the tower generator; in some cases this is immediately evident from the curve picture, but in general it may not be clear geometrically and may require algebraic effort. If we can identify the tower generator, then $V_s$ may be computed by counting the number of punctures covered by a bigon from a relevant intersection point to the unique intersection point on the green portion of $\mu\alpha_s$, and adding $k$ where the relevant intersection point occurs with coefficient $\sU^k$ in a cycle representing the tower in $H_* C_s$.

We demonstrate this procedure in several examples. First, Figure \ref{fig:nu-RHT} shows the computation of $V_s(C)$ when $C = \CFKr(RHT)$ and $-1 \le s \le 1$. In each case $\alpha_s$ can be homotoped to intersect $\Gamma$ exactly once, forming no differentials, so $H_* C_s$ is just a tower whose generator corresponds to the unique intersection point of the blue part of $\mu\alpha_s$ with $\Gamma$. In each case there is a bigon from this intersection to the unique intersection point of the green part of $\mu\alpha_s$ with $\Gamma$, and the number of marked points covered is $V_s$. We see that for this complex $V_{-1} = V_0 = 1$ and $V_1 = 0$. The generalization of these cases to arbitrary $s$ is clear: $V_s = -s$ if $s < 0$ and $V_s = 0$ if $s > 0$. From this information we see that $\nu^-(C) = 1$.

The next example, an abstract complex whose corresponding curve $\Gamma$ is shown in Figure \ref{fig:nu-example}, is slightly more complicated. If $s$ is sufficiently large or small there is a single intersection between $\alpha_s$ and $\Gamma$ and a single bigon to consider, as in the case of the trefoil. For example, when $s = -3$ there is a single bigon covering three marked points, so $V_{-3} = 3$. However when $-2 \le s \le 2$ there are three intersection points of $\alpha_s$ with $\Gamma$. Suppose these are labelled $a$, $b$, $c$ from left to right as in the figure, and let $x$ be the intersection of the green part of $\mu\alpha_s$ with $\Gamma$. When $s = -2$, the Floer chain complex of $\alpha_s$ with $\Gamma$ has $\partial a = \sU b$, $\partial b = 0$, and $\partial c = \sU^2 b$. The homology is isomorphic to a tower generated by $[c + \sU a]$ along with a copy of $\F$ generated by $b$. Note that in the Figure there is a bigon from $a$ to $x$ covering one marked point, so $v_{-2}(a) = \sU x$, but this does not mean that $\sU x$ is in the image of $v_{s,*}$ since $a$ is not a cycle. In fact $V_{-2}$ is determined by $v_{-2}( c + \sU a ) = v_{-2}( \sU a ) = \sU^2 x$. Similarly we compute $V_{-1}$ by considering how $v_{-1}$ acts on $\sU a + c$, whose homology class represents the generator of the tower in $H_* C_{-1}$. In this case there is a bigon from $a$ to $x$ covering zero marked points, so $V_{-1} = 1$. For $s=0$ we see that the tower is generated by $[a +c]$ and for $s = 1$ it is generated by $[a+\sU c]$; in each case there is a bigon from $a$ to $x$ with no marked points, so $V_0 = V_1 = 0$. It follows that $\nu^-(C) = 0$ in this example.

We remark that this example, while more complicated than the right-hand trefoil, is still fairly easy geometrically since $\Gamma_0 = \gamma_0$, $\gamma_0$ is embedded, and the intersections of it with $\alpha_s$ appear in the same order when following either $\gamma_0$ or $\alpha_s$. In such cases there are an odd number of generators connected by a chain of bigons that alternate being below or above $\alpha_s$, and we know that the generator of the tower in homology will be the class of the sum of every other generator (i.e. $a, c, e, \ldots$ if generators are labelled as above) each multiplied by some power of $\sU$, and $V_s$ is determined by a single bigon from $a$ to $x$. Thus it suffices to determine the power of $\sU$ attached to $a$ and add this to the number of marked points covered by the bigon from $a$ to $x$. The relevant power of $a$ can be determined by considering the string of bigons connecting it to other intersection points appearing in the tower generator; the difference in $\sU$ power attached to $a$ and another intersection point in the generator is the signed count of marked points covered by the intervening bigons, with bigons below $\alpha_s$ counting negatively, and the generator is such that the minimum power of $\sU$ attached to an intersection point is 0. It follows that $V_s$ is the maximum, over the intersection points $a, c, e, \ldots$ that appear in a cycle generating the tower, of the signed multiplicity with which the domain from the intersection point to $x$ covers the marked points.

Our final example, shown in Figure \ref{fig:nu-example2} is more complicated. We consider the three complexes from Example \ref{ex:different-bchains}, which correspond to the same multicurve $\Gamma$ with three different choices of bounding chain $\bchain$. Recall that $\bchain$ contains either the degree 1 self-intersection point $p_1$, the degree 2 self-intersection point $p_2$, or neither. In each case for each value of $s$ there is a unique intersection of $\alpha_s$ with the horizontal component of $\Gamma$, labelled $a$, and there is a bigon from that intersection point to $x$ that covers zero marked points if $s \ge 0$ and covers $-s$ marked points if $s < 0$. The challenge is determining if $a$ survives in homology. If $\bchain = \emptyset$ then there are no other bigons to or from $a$ or $x$, so it is clear $V_s = \max\{0, -s\}$. If $\bchain = \{p_2\}$ then there are no additional bigons in or out of $a$ so $V_s$ is still $\max\{0, -s\}$. Finally if $\bchain = \{p_1\}$ we note that when $-2 \le s \le 2$ there are also (generalized) bigons contributing $\sU c$ to $\partial a$ and contributing $\sU^{3 - |s|} c$ to $\partial b$. From this we see that the generator of the tower in $H_* C_s$ is $[b + \sU^{2-|s|} a ]$, and for each $s$ we add the exponent of $\sU$ in these generators to the values of $V_s$ computed when $\bchain = \emptyset$. Note that for this complex $\nu^- = 2$. This example shows that $V_s$ does not depend only on the distinguished curve component $\gamma_0$, since all three complexes have the same $\gamma_0$ component, but really requires the richer invariant $\Gamma_0$. This example also demonstrates that $\nu^-$ may be distinct from $\nu$, which only depends on $\gamma_0$ and is 0 for all three complexes in this case.

\begin{figure}
\includegraphics[scale = 1]{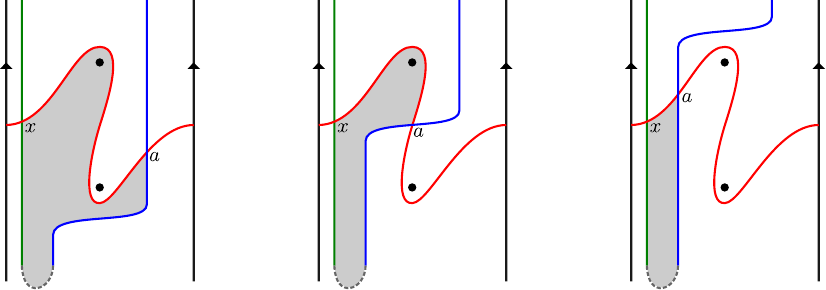}

\vspace{2mm}
$V_{-1} = 1$ \hspace{40 mm} $V_0 = 1$ \hspace{40 mm} $V_1 = 0$
\caption{Computing $V_s(RHT)$. In each diagram the blue curve is $\alpha_s$, the green curve curve is $\mu$, and when combined along with the dotted arc they form the curve $\mu\alpha_s$. The unique intersection $a$ between $\Gamma(RHT)$ and $\alpha_s$ generates the tower in $H_*C_0^-$, and $V_s$ counts the number of marked points enclosed in the shaded bigons.}
\label{fig:nu-RHT}
\end{figure}

\begin{figure}
\includegraphics[scale = .85]{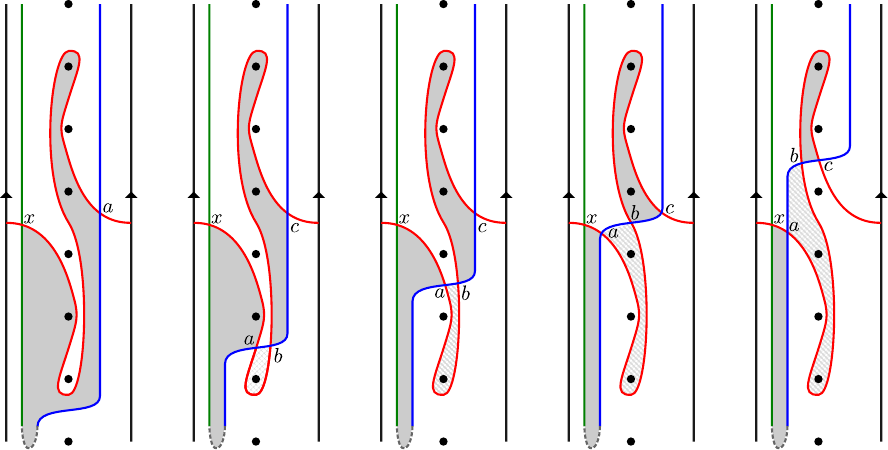}

\vspace{2mm}
$V_{-3} = 3$ \hspace{13 mm} $V_{-2} = 2$ \hspace{13 mm} $V_{-1} = 1$ \hspace{13 mm} $V_0 = 0$ \hspace{14 mm} $V_1 = 0$
\caption{Computing $V_s$ for a complex $C$ whose corresponding curve $\Gamma$ is shown in red. The tower in $H_* C_s$ is generated $a$ when $s = -3$ and in other cases by a sum of $a$ and $c$ with appropriate powers of $\sU$. In those cases $V_s$ is obtained by taking the maximum of the count of marked points in the bigon from $a$ to $x$ or the signed count of the shaded domain from $c$ to $x$, where the hatched regions have negative multiplicity.}
\label{fig:nu-example}
\end{figure}

\begin{figure}
\hspace{9mm}\includegraphics[scale = .75]{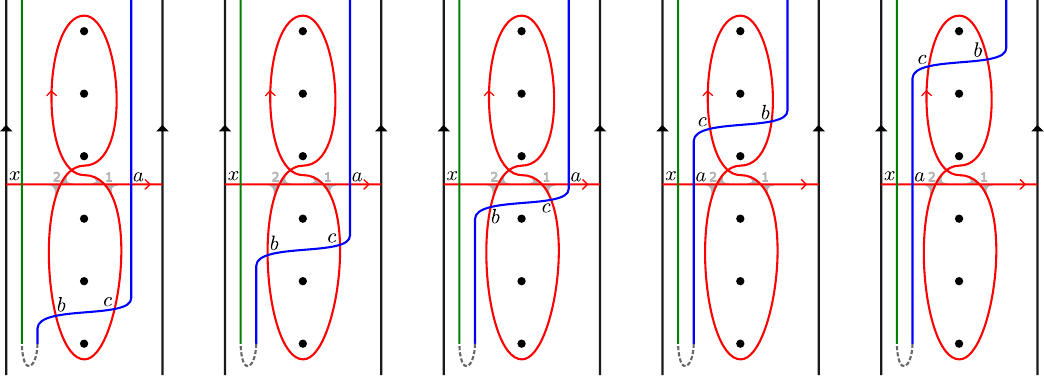}
\begin{tabular}{>{$}c<{$}  >{$}l<{$}   >{$}c<{$}  >{$}c<{$}  >{$}c<{$}   >{$}c<{$} }
\bchain & V_{-2} \hspace{1cm} &\hspace{1cm}  V_{-1} \hspace{1cm}  &\hspace{1cm}  V_{0} \hspace{1cm}  &\hspace{1cm}  V_1 \hspace{1cm}  & \hspace{1cm} V_2 \hspace{1cm} \\ [.5 em]
 \hline \\[-.7em]
\emptyset & \,\, 2 & 1 & 0 & 0 & 0  \\
\{p_2\} & \,\,2 & 1 & 0 & 0 & 0  \\ 
\hspace{4mm} \{p_1\} \hspace{4mm}& \,\,2 & 2 & 2 & 1 & 0  \\ [.3 em]
\hline
\end{tabular}
\caption{Computing $V_s$ for complexes $C$ represented the given multicurve with different bounding chain decorations for $-2 \le s \le 2$. In each case there is a bigon from $a$ to $x$ covering $\max\{0, -s\}$ marked points, but whether $a$ only determines a cycle in $H_* C$ when $\bchain$ is $\emptyset$ or $\{p_2\}$. When $\bchain = \{p_1\}$, the tower in $H_* C$ is instead generated by $b + \sU^{2-|s|} a$. }
\label{fig:nu-example2}
\end{figure}

\subsection{$d$-invariants of surgeries}

Another family of concordance invariants related to the $v_s$ maps are the $d$-invariants of surgeries on $K$. Given a rational homology sphere $Y$ and a spin$^c$ structure $\spin$, the Heegaard Floer homology $\HFminus(Y;\spin)$ is a tower $\F[\sU]$ plus torsion summands and is equipped with an absolute $\Q$-grading. We define the $d$-invariant $d(Y;\spin)$ to be the grading of an element that generates the tower. For any nonzero rational number $\frac p q$ and any $0 \le i < p$ the rational number $d( S^3_{p/q}(K); \spin_i )$ is a knot invariant, since $\HFminus(S^3_{p/q};\spin_i)$ can be obtained from the knot invariant $\CFKr(K)$ using the surgery formula. In fact, since the tower in the resulting homology is not affected by summands of $\CFKr(K)$ that do not support the homology, it is clear the result only depends on the local equivalence class of $\CFKr(K)$, or equivalently on the distinguished component $\Gamma_0(K)$. As such, the invariant $d( S^3_{p/q}(K); \spin_i )$ for any choice of $p, q$ and $i$ is a concordance invariant.

Ni and Wu gave a formula for computing $d( S^3_{p/q}(K); \spin_i )$ in terms of the invariants $V_s$ and $H_s$ \cite{NiWu}, which we now recall with the aid of immersed curves. We will assume that $p > 0$ (the case of negative surgeries can be obtained from this case by taking a mirror image). The first step is to identify an element in the mapping cone complex that generates the tower in $\HFminus(S^3_{p/q};\spin_i)$. If we first take homology of each complex $C_{s_j}$ and $C^-_{s_j}$ and ignore the torsion in $C_{s_j}$ we see that the tower comes from the homology of the following complex.
\begin{center}
\begin{tikzpicture}[scale = .8]
    \node at (1.5,1) {$\cdots$};
    \node (a1) at (2,2) {$\F[\sU]$};
    \node (b2) at (4,0) {$\F[\sU]$};
    \node (a2) at (6,2) {$\F[\sU]$};
    \node (b3) at (8,0) {$\F[\sU]$};
    \node (a3) at (10,2) {$\F[\sU]$};
    \node (b4) at (12,0) {$\F[\sU]$};
    \node (a4) at (14,2) {$\F[\sU]$};
    \node (b5) at (16,0) {$\F[\sU]$};
    \node (a5) at (18,2) {$\F[\sU]$};
    \node at (18.5,1) {$\cdots$};
     
    \draw[->] (a1) -- node[above right] {$\sU^{H_{s_{-2}}}$} (b2);
    \draw[->] (a2) -- node[right] {$\sU^{V_{s_{-1}}}$} (b2);
    \draw[->] (a2) -- node[above right] {$\sU^{H_{s_{-1}}}$} (b3);
    \draw[->] (a3) -- node[right] {$\sU^{V_{s_{0}}}$} (b3);
    \draw[->] (a3) -- node[above right] {$\sU^{H_{s_{0}}}$} (b4);
    \draw[->] (a4) -- node[right] {$\sU^{V_{s_{1}}}$} (b4);
    \draw[->] (a4) -- node[above right] {$\sU^{H_{s_{1}}}$} (b5);
    \draw[->] (a5) -- node[right] {$\sU^{V_{s_{2}}}$} (b5);
\end{tikzpicture}
\end{center}

Recall that the $s_j = \lfloor \frac{i + pj}{q} \rfloor$; in particular the $s_j$ are increasing and $s_j \ge 0$ if and only if $j \ge 0$. Using properties of the sequences $\{V_s\}$ and $\{H_s\}$ it can be shown that the generator of the tower in the homology of this complex is the generator of the tower at the beginning of the map $\sU^{V_{s_0}}$ if $V_{s_0} \ge H_{s_{-1}}$ or the generator of the tower at the beginning of the map $\sU^{H_{s_{-1}}}$ if $V_{s_0} \le H_{s_{-1}}$.

The situation is depicted for the 1-surgery on the right-hand trefoil in Figure \ref{fig:d-invts-of-surgery}, recall that the mapping cone complex is the Floer complex of $\Gamma(K)$ with a line of slope 1 if that line is perturbed into a particular position. The green portions of the curve correspond to the copies of $C^-_0$ in the mapping cone complex and the blue portions correspond to the $C_{s_j}$ portions. It is not always easy to identify a generator of the tower of $H_* C_{s_j}$, but in this case it is since each blue portion can be arranged to have a single intersection with $\Gamma(K)$ which must generate the tower. In this case the generators are connected by bigons covering certain numbers of marked points that determine the maps $v_{s_j}$ and $h_{s_j}$ which are multiplication by $\sU^{V_{s_j}}$ and $\sU^{H_{s_j}}$, respectively. By the discussion above, the generator of the overall homology comes from the generator on either the last blue section crossing the vertical line through the punctures at negative height or the first blue section crossing at nonnegative height, and since $V_0 = 1 > 0 = H_{-1}$ it must be the latter; i.e. the intersection point marked in red corresponds to the tower $\HFminus(S^3_1(K))$. This can be checked geometrically in this example since it is the only intersection that survives pulling the blue/green curve tight, though we do not give a geometric proof that the correct generator can always be found in this way in analogy to Ni and Wu's algebraic work.

\begin{figure}
\includegraphics[scale = .9]{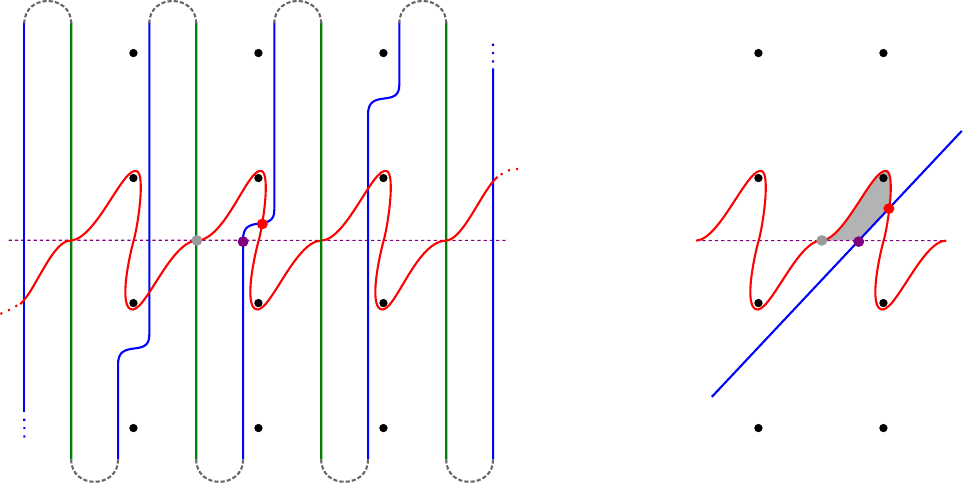}
\caption{Computing the $d$-invariant of $S^3_1(RHT)$, which is the Maslov grading of the intersection point indicated with a red dot. The pair of curves on the left gives the $\CFminus(S^3_1(RHT))$ exactly as it arises from the surgery formula, while the pulled-tight arrangement on the right makes it clear that $\HFminus(S^3_1(RHT))$ consists of a single tower $\F[\sU]$ generated by the red dot. We can find the grading relative to that of the purple dot, which represents the generator of $\HFminus$ of $1$-surgery on the unknot, by counting marked points in the shaded immersed triangle.}
\label{fig:d-invts-of-surgery}
\end{figure}

It remains to determine the absolute $\Q$-grading of the generator we have identified. Relative gradings can be computed from a curve diagram, but we have not yet discussed absolute gradings. For the surgery formula, absolute gradings are set for the $C^-_0$ pieces by the convention that the surgery formula should give the correct gradings when applied to the unknot (the $d$-invariants for surgeries on the unknot can be determined by an explicit formula). Thus it is most convenient to compute the desired $d$-invariant relative to the $d$-invariant of the corresponding surgery on the unknot $U$; that is, we compute $d( S^3_{p/q}(K); i) -  d( S^3_{p/q}(U); i)$. If we compare $\Gamma(K)$ to the curve associated with the unknot (the purple dotted line in the figure), by assumption the grading of the two curves agree at the grey dot is such that the intersection of purple and blue near the red dot (marked with a purple dot) has grading $0$. The grading difference between the generators corresponding to the purple dot and the red dot is twice the number of marked points enclosed by the triangular region with corners at the gray, purple, and red dots, or equivalently twice the number of marked points enclosed by the bigon connecting the red dot to the gray dot that determines the map $v_{s_0}$, i.e. the difference is $2V_{s_0}$ (recall that in this example $V_{s_0} > H_{s_{-1}}$---if instead $H_{s_{-1}}$ were bigger we would consider a bigon to the left of the grey dot corresponding to the map $h_{s_{-1}}$. Thus we have sketched pictorially the proof of Ni and Wu's formula, which says that 
$$d( S^3_{p/q}(K); i) -  d( S^3_{p/q}(U); i) = -2\max( V_{s_0}, H_{s_{-1}} ).$$
In this example, this difference in $d$-invariants can also be computed when the curves are pulled tight by counting the marked points covered by the shaded region in Figure \ref{fig:d-invts-of-surgery}. Since $d( S^3_{1}(U); 0) = 0$, we find that $d( S^3_{1}(K); 0) = -2$.

\subsection{Generalized $V_s$ invariants}\label{sec:other-inclusion-maps-invariants}

More generally the construction of the invariants $V_s$ and $\nu^-$ can be copied using any of the inclusion maps $v_{(s_1, \ldots, s_k)}$ discussed in Section \ref{sec:other-inclusion-maps}.
\begin{definition}\label{def:generalized-Vs}
For a knot-like complex $C$ and a finite or infinite increasing sequence of integers $\vec s = (s_1, \ldots, s_k)$ or $\vec s = (s_1, s_2, \ldots)$, we define
$$V_{\vec s} = \dim_\F \coker( v_{\vec s, *}: H_*(C_{\vec s}) \to H_*(C^-_0).$$
\end{definition}
The integers $V_{\vec s}$, which we will refer to as \emph{generalized $V_s$ invariants}, are invariants of the knot-like complex $C$. In fact, they depend only on $\Gamma_0(C)$ for the same reason the $V_s$ invariants do. While generalized $V_s$ invariants have not been defined before, some invariants equivalent to or determined by $V_{\vec s}$ for particular choices of $\vec s$ have been defined. We may view each particular choice of $\vec s$ as an attempt to glean more partial information from $\Gamma_0(K)$.

An example of this that has appeared in the literature are the invariants $Y_n$ defined by Juh{\'a}sz and Zemke in \cite{JuhaszZemke2020}. These are, by definition, $V_0$ of the complex $C \otimes S_{(-n, -n+1, \ldots, n-1, n)}$. Equivalently, these are $V_{(-n, -n+1, \ldots, n-1, n)}$ for the complex $C$. These are computed for some examples in Figure \ref{fig:Yn}. For the left-hand trefoil curve, it is clear from the figure that $Y_1 = 0$ and $Y_2 = 1$. For the curve from Example \ref{ex:nontrivial-bchain}, we find that $Y_1 = Y_2 = 2$. Note that $H_* C_{(-1,0,1)}$, relevant for computing $Y_1$, has five generators corresponding to the intersection points labeled in the figure, with differential
$$\partial(a) = \sU^2 b, \partial(b) = 0, \partial(c) = \sU b, \partial(d) = \sU e, \text{ and } \partial(e) = 0.$$
Thus the class of $a + \sU c$ generates the tower. The shaded bigon shows that $v_{(-1,0,1)}(c)$ is $\sU$ times the generator $x$ of $H_* C^-_0$ and there are no bigons from $a$ to $x$, so $v_{(-1,0,1)}(a + \sU c) = \sU^2 x$ and $Y_1 = V_{(-1,0,1)} = 2$. To compute $Y_2$, note that the homology of the complex $C_{(-2,-1,0,1,2)}$ also has five generators with differential 
$$\partial(a) = \sU b, \partial(b) = 0, \partial(c) = \sU b + \sU d, \partial(d) = 0, \text{ and } \partial(e) = \sU d.$$
The class of $a + c + e$ generates the tower, and there are bigons from each of $a$, $c$, and $e$ to $x$ each covering one marked point (the bigon from $c$ to $x$ is shaded in the figure) which shows that $Y_2 = V_{(-2,-1,0,1,2)} = 2$. Note that when the tower is generated by a sum of multiple generators, it suffices to find a bigon from any one of these intersection points to $x$; if there are multiple bigons as in this example gradings force them all to determine the same value for $Y_n$.

\begin{figure}
\includegraphics[scale = 1]{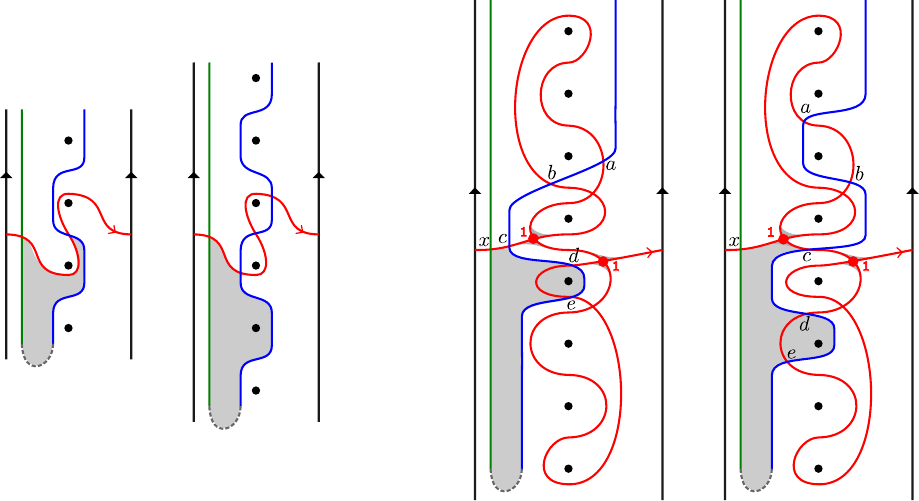}
\vspace{2mm}
$Y_1 = 0$ \hspace{20 mm} $Y_2 = 1$ \hspace{40 mm} $Y_1 = 2$ \hspace{30 mm} $Y_2 = 2$
\caption{The computation of $Y_1 = V_{(-1,0,1)}$ and $Y_2 = V_{(-2,-1,0,1,2)}$ for the left-hand trefoil curve (left) and the curve from Example \ref{ex:nontrivial-bchain} (right).}
\label{fig:Yn}
\end{figure}

\subsection{The $\Upsilon$ invariant} The final invariant we will consider is $\Upsilon_K$, defined by Ozsv{\'a}th, Stipsicz and Szab{\'o} in \cite{OSS:upsilon}. More generally, the definition can be applied to any knot-like complex $C$ to give an invariant $\Upsilon_C$ of the complex, where $\Upsilon_K = \Upsilon_{\CFKr(K)}$. The invariant takes the form of a piecewise linear function $\Upsilon_C: [0,2] \to \R$. It is easiest to define the restriction of $\Upsilon_C$ to $[0,2]\cap \Q$ (which is $\Q$ valued); the function can then be extended to irrational numbers by piecewise linearity. In particular, for any fixed rational $t = \frac m n$ in $[0,2]$ we have a rational invariant $\Upsilon_C(t)$. Though not originally defined in this way, we will show that $\Upsilon_C(t)$ is determined by the generalized $V_s$ invariants defined in Section \ref{sec:other-inclusion-maps-invariants}.

The invariant $\Upsilon_C(t)$ is defined using an algebraic modification of the knot-like complex of $C$, the \emph{$t$-modified complex} $tC$. This is a complex over the ring $F[ v^{\frac 1 n} ]$, equipped with a single grading $gr_t$ for which $v$ has degree $-1$. It has the same generators as $C$, with grading given by
$$\gr_t = \left(1 - \frac t 2\right) \gr_w + \frac t 2 \gr z,$$
and the differential is obtained from the differential on $C$ by setting $Z = v^{\frac m n}$ and $W = v^{2 - \frac m n}$. This modified complex can be extracted from the immersed multicurve $(\Gamma, \bchain)$ in $\overline \sS_u$ representing $C$ by replacing each marked point in $\overline \sS_u$ with $2n$ marked points at the same height with $x$-coordinates $\frac{-n + 1 + 2i}{2n} \epsilon$ for $0 \le i < 2n$ (see Figure \ref{fig:upsilon-example}). Just as $C$ is the Floer complex of $(\Gamma, \bchain)$ with $\mu$ in the doubly punctured strip $\overline S_{w,z}$, the $t$-modified complex $t C$ is the Floer complex of $(\Gamma, \bchain)$ with $\mu_{(\frac m n - 1)\epsilon} = \{(\frac m n - 1)\epsilon\}\times \R$, the vertical line that has exactly $m$ out of each group of $2n$ marked points on its left. Thus for a fixed denominator $n$, finding the $t$-modified knot Floer complex as the numerator increases from 0 to $2n$ corresponds to sliding the vertical line rightward from $\mu_{-\epsilon}$ to $\mu_\epsilon$. The grading function on $\Gamma$ is unchanged outside the strip $[-\epsilon, \epsilon]\times \R$, but inside this strip we now consider vertical cuts from each of the new marked points and the grading jumps by $\frac 1 n$ at each of them, thus spreading a single grading jump by $2$ in the original picture out over $2n$ smaller jumps; since the generators of $tC$ come from intersections in the middle of the interval $[-\epsilon, \epsilon]\times \R$ these fractional jumps are needed to determine the (rational) grading $\gr_t$.

\begin{figure}
\includegraphics[scale = 1]{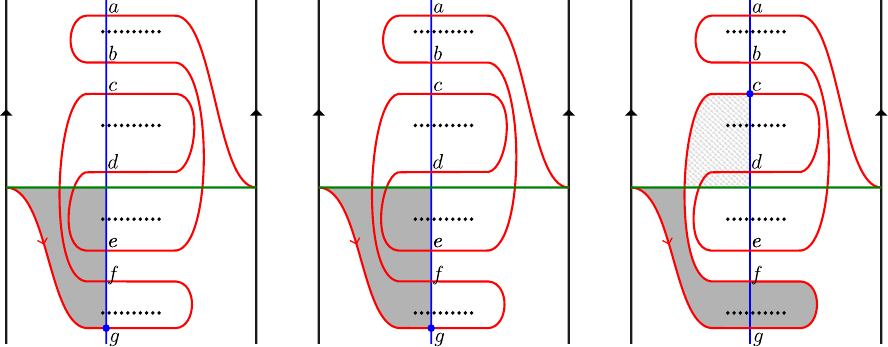}
\vspace{2mm}
$t = \frac 1 5$ \hspace{40 mm} $t = \frac 3 5$ \hspace{40 mm} $t = \frac 4 5$
\caption{Computing $\Upsilon_K(t)$ where $K= C_{2,-1}(LHT) $ for three values of $t = \frac m n$. Each marked point in $\overline \sS_u$ is replaced with $2n$ marked points, each of which contributes $\frac 1 n$ instead of the usual 2 in grading computations. The vertical line $\mu$ is shifted horizontally such that exactly $m$ of each group of $2n$ marked points lies to its left. One can check that the grading of the tower in homology agrees with the grading of the indicated generator, which can be computed as $\frac 1 n$ times the signed count of marked points covered by the shaded regions. }
\label{fig:upsilon-example}
\end{figure}

The $t$-modified complex gives rise to $t$-modified homology $t H_*C$, a finitely generated module over $\F[v^{\frac 1 n}]$, which has the form of a tower $\F[v^{\frac 1 n}]$ plus torsion summands (this uses that $C$ is knot-like). We define $\Upsilon_C(t)$ to be the grading $\gr_t$ of the generator of the tower. To extract $\Upsilon_C(t)$ from the immersed curve picture, we must first identify the intersection point(s) corresponding to the generator of the tower; note that the generator of this tower in homology will always be represented by a cycle, which is a sum of generators of the complex (i.e. intersection points in the curve diagram) times appropriate powers of $v$. As discussed elsewhere in this section, this is where the immersed curve approach is limited; it is sometimes possible in small examples to identify the correct intersection points just from the pictures, but in general some algebraic legwork is needed at this step. Once an appropriate cycle has been identified, the grading of that cycle can be computed from the immersed curve diagram. The desired grading of the generator of the tower is the grading of any term in the cycle, and there must be at least one generator of the complex appearing in the cycle with coefficient $v^0$ so it suffices to compute the grading of that generator. If the relevant intersection point lies on $\gamma_0$ then the grading is given by $\frac 1 n$ times the signed count of marked points covered by the three sided domain bounded by a path in $\gamma_0$ from the left endpoint of $\gamma_0$ to the relevant generator, a vertical segment of $\mu$ and a horizontal segment from $\mu$ to the left endpoint of $\gamma_0$. Note that the relevant generator need not lie on $\gamma_0$; if not we perform the same procedure using a path from the left endpoint of $\gamma_0$ to the generator in the train track associated with $(\Gamma, \bchain)$, and the grading jumps of any crossover arrows used in that path must be taken into account.

Figure \ref{fig:upsilon-example} computes $\Upsilon_K(t)$ where $K$ is the $(2,-1)$ cable of the left-hand trefoil and three values of $t$. The differential of the $t$-modified complex $tCFK(K)$ in each case is given in the following table.
$$\setlength{\arraycolsep}{30pt}\begin{array}{lll}
t = \frac 1 5 & t = \frac 3 5 & t = \frac 4 5 \\ [.5 em]
\hline \\ [-.5 em]
\partial a = v^{\frac 1 5} b & \partial a = v^{\frac 3 5} b & \partial a = v^{\frac 4 5} b \\
\partial b = 0 & \partial b = 0 & \partial b = 0 \\
\partial c = v^2 b + v^{\frac 2 5} f & \partial c = v^2 b + v^{\frac 6 5} f  & \partial c = v^2 b + v^{\frac 8 5} f  \\
\partial d = v^{\frac 9 5} c + v^{\frac 1 5} e & \partial d = v^{\frac 7 5} c + v^{\frac 3 5} e & \partial d = v^{\frac 6 5} c + v^{\frac 4 5} e \\
\partial e = v^{\frac{18}{5}} b + v^2 f & \partial e = v^{\frac{14}{5}} b + v^2 f  & \partial e = v^{\frac{12}{5}} b + v^2 f \\
\partial f = 0 & \partial f = 0 & \partial f = 0 \\
\partial g = v^{\frac 9 5} f & \partial g = v^{\frac 7 5} f & \partial g = v^{\frac 6 5} f
\end{array}$$

When $t = \frac 1 5$, one can check that the tower (the free $\F[v^{\frac 1 5}]$ summand) in homology is generated by the cycle
$x_{\frac 1 5} = g + v^{\frac 7 5} c + v^{\frac{16}{5}} a$. Since the shaded region in the figure covers two marked points, we see that
$$\Upsilon_K\left(\frac 1 5\right) = \gr_t(x_{\frac 1 5}) = \gr_t(g) = \frac 2 5.$$
When $t = \frac 3 5$ the tower in $t HFK$ is generated by $x_{\frac 3 5} = g + v^{\frac 1 5} c + v^{\frac{8}{5}} a$ and we find that
$$\Upsilon_K\left(\frac 3 5\right) = \gr_t(x_{\frac 3 5}) = \gr_t(g) = \frac 6 5.$$
When $t = \frac 4 5$ the tower in $t HFK$ is generated by $x_{\frac 4 5} = c + v^{\frac 2 5} g + v^{\frac 6 5} a$ and we find that
$$\Upsilon_K\left(\frac 4 5\right) = \gr_t(x_{\frac 4 5}) = \gr_t(c) = \frac 6 5.$$

It is worth noting that parts of the computation above can be done independent of $t$, giving a systematic approach to computing $\Upsilon_K$ as a function. Note that as $t$ varies the differential is unchanged except for the powers of $v$; if we ignore powers of $v$ (that is, set $v = 1$ and remove all marked points from the immersed curve diagram) to get a complex over $\F$ we see that the cycles in grading 0 (which supports the homology) are $0$, $c+e$, $a+c+g$, and $a+e+g$, and the image of the differential in this grading is the span of $c+e$, so the generator of homology is represented by either $a+c+g$ or $a+e+g$. For each value of $t$ homology classes in the tower summand are represented by either $a+c+g$ or $a+e+g$ with powers of $v$ added as needed to make each element homogeneous, and the generator of the tower is the cycle of this form with maximal grading. For a fixed cycle representing the nonzero homology class of the $\F$-complex, the maximum grading comes when one or more generators appearing in the cycle are multiplied by $v^0$; these are necessarily the generator(s) with minimal $\gr_t$ among those appearing in the cycle. Thus the minimal grading of all generators appearing in a given cycle is a candidate for the grading of the generator of the tower, and we maximize this over all the cycle representatives we enumerated. Recall that for each generator $x$ of $\CFKr$ the grading $gr_t = M(x) - t A(x)$. Figure \ref{fig:upsilon-example2} shows $gr_t$ as a function of $t$ for the relevant generators of $tCFK$ in the example above, namely $a$, $c$, $e$, and $g$. The middle two panels show the grading of the top element associated with the cycles $a+c+g$ and $a+e+g$, respectively, and the maximum of these two functions, shown in the fourth panel, is the function $\Upsilon_K(t)$.

\begin{figure}
\includegraphics[scale = .8]{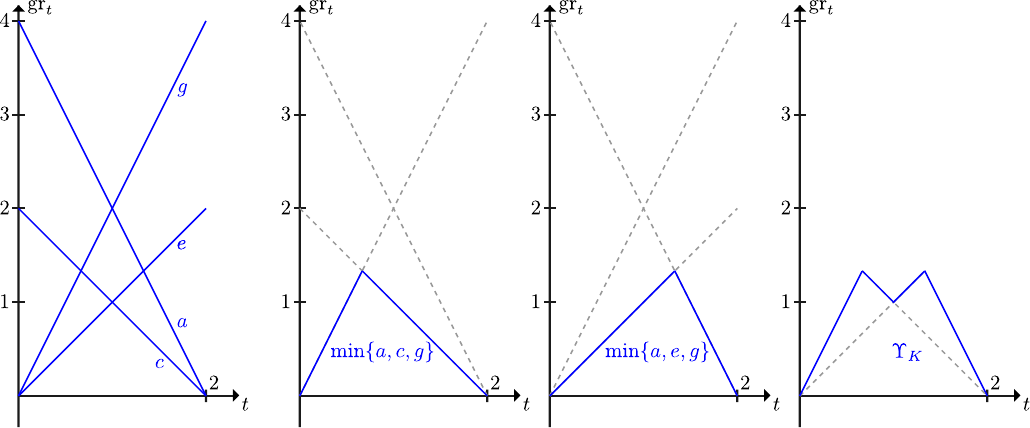}
\caption{Computing the function $\Upsilon_K(t)$ when $K$ is the $(2,-1)$ cable of the left-hand trefoil. We first enumerate all cycles representing the nontrivial homology class in $CFK_\F(K)$; these are $a+c+g$ and $a+e+g$. For each cycle representative we take the minimum grading of the generators involved and then take the maximum of this over all cycle representatives.}
\label{fig:upsilon-example2}
\end{figure}

To relate $\Upsilon_K$ to the generalized $V_s$ invariants we use an alternative description, due to Livingston \cite{Livingston:upsilon}. Recall that a knot-like complex $C$ admits two $\Z$-filtrations given by the negative powers of $W$ and $Z$; that is, the bifiltration level $(i,j)$ contains all elements of the form $W^a Z^b x$ where $x$ is a generator of $C$, $a \ge -i$ and $b \ge -j$. These filtrations extend to filtrations on $C^\infty = C \otimes \F[W, W^{-1}, Z, Z^{-1}]$, and if we restrict to the Alexander grading 0 summand $C^\infty_0$ of $C^\infty$, which when $C = \CFKr(K)$ is isomorphic to $CFK^\infty(K)$ in the traditional notation, the filtrations determined by powers of $W$ and $Z$ are precisely the filtrations denoted $Alg$ and $Alex$ in \cite{Livingston:upsilon}. For a fixed $t$, we define a new rational filtration $\sF_t$ on $C^\infty_0$ such that the filtration level $\sF_t(x)$ of an element is given by $\sF_t(x) = \frac t 2 Alex(x) + (1 - \frac t 2) Alg(x)$. If $t = \frac m n$, note that $\sF_t(x)$ takes values in $\frac{1}{2n} \Z$ if $m$ is odd and in $\frac 1 n \Z$ if $m$ is even. Livingston shows that $\Upsilon_C(t)$ can also be defined as $-2$ times the minimum filtration level $s$ such that inclusion induces a surjection on the $H_0$. Note that $H_*C^\infty_0$ is always a copy of $\F[\sU, \sU^{-1}]$, with an element in each even grading, so $\Upsilon_C(t)$ is $-2$ times the minimum $s$ such that the $s$th filtration level $(C^\infty_0, \sF_t)_s$ contains an element representing the homology class in Maslov grading 0.

To use immersed curves we prefer to work with $C$ rather than $C^\infty$; this can be done by working in sufficiently low filtration levels. We first observe that
\begin{equation}\label{eq:increment-s}
(C^\infty_0, \sF_t)_{s-1} = \sU \cdot (C^\infty_0, \sF_t)_{s},
\end{equation}
 and so for arbitrary $k$ we have
\begin{align*}
\Upsilon_C(t) &= -2 \min\{ s | i_*: H_0 (C^\infty_0, \sF_t)_s \to H_0 C^\infty_0 \text{ is surjective} \} \\
 &= -2 \min\{ s | i_*: H_{-2k} (C^\infty_0 \sF_t)_{s-k} \to H_{-2k} C^\infty_0 \text{ is surjective} \} \\
  &= -2 \min\{ s + k | i_*: H_{-2k} (C^\infty_0, \sF_t)_{s} \to H_{-2k} C^\infty_0 \text{ is surjective} \}.
 \end{align*}
Here $k$ is a fixed integer and it suffices to let $s$ run over all elements of $\frac{1}{2n} \Z$, or just of $\frac 1 n \Z$ if $m$ is even. For sufficiently low filtration levels (i.e. $s \le \bar s$ for some $\bar s$), $C^\infty_0$ and $C_0$ agree since $(C^\infty_0, \sF_t)_s$ contains no elements with $Alg(x)$ or $Alex(x)$ positive. For sufficiently large $k$ there will be some $s \le \bar s$ making the relevant map a surjection, so we may ignore $s > \bar s$ and pass to $C_0$. We then have that for sufficiently large $k$
$$\Upsilon_C(t) = -2 \min\{ s + k | i_*: H_{-2k} (C_0, \sF_t)_{s} \to H_{-2k} C^-_0 \text{ is surjective} \}. $$

The complex $(C_0, \sF_t)_{s}$ is the subcomplex of $C_0$ generated by elements $W^a Z^b x$ with $\sF_t(W^a Z^b x) = (1-\frac t 2)(-a) + \frac t 2 (-b) \le s$. When $C_0$ is viewed in the left half-plane, this is the subcomplex generated by all elements on or below the line with slope $(1 - \frac 2 t)$ and intercept $\frac{2s}{t}$. Applying the floor function to the $y$-coordinate turns this line into an infinite staircase, so the complex $(C_0, \sF_t)_{s}$ may be written as $C_{(i_1, i_2, \ldots)}$ for some sequence $(i_1, i_2, \ldots)$ determined by the slope and intercept of the line. For example, if $t = \frac 2 3$ the slope of the relevant line is $-2$, and the intercept is $3s$. Figure \ref{fig:upsilon-example3} shows the relevant lines for $s = -2$, $s= -\frac 5 3$, and $s = -\frac 4 3$, from which we can see that
\begin{align*}
(C_0, \sF_{\frac 2 3})_{-2} &= C_{(-6, -5, -3, -2, 0, 1, 3, 4, \ldots)} \\
(C_0, \sF_{\frac 2 3})_{-\frac 5 3} &= C_{(-5, -4, -2, -1, 1, 2, 4, 5, \ldots)} \\
(C_0, \sF_{\frac 2 3})_{-\frac 4 3} &= C_{(-4, -3, -1, 0, 2, 3, 5, 6, \ldots)}
\end{align*}
Given $t$ and $s$, let $\text{SC}(t,s) = (i_1, i_2, \ldots)$ denote the relevant sequence of staircase coefficients so that $(C_0, \sF_t)_s = C_{ \text{SC}(t,s) }$. The inclusion map of $(C_0, \sF_t)_s$ into $C_0^-$ is now that map $v_{\text{SC}(t,s)}$ as defined in Section \ref{sec:other-inclusion-maps-invariants}. The homology of the complex $C_0^-$ is a tower $\F[\sU]$ generated in Maslov grading 0, and the map $v_{\text{SC}(t,s), *}$ induced by inclusion is surjective in grading $-2k$ if and only if $V_{\text{SC}(t,s)}(C) = \rank_\F(\coker v_{\text{SC}(t,s), *} ) \le k$. It follows that
\begin{equation}\label{eq:upsilon-as-minimum}
\Upsilon_C(t) = -2 \min\{ s + k | V_{\text{SC}(t,s)}(C) \le k\}
\end{equation}
where $k \gg 0$ is a fixed integer and $s$ ranges over values of $\frac{1}{2n} \Z$ at or below the threshold $\bar s$.

We note that the minimum in \eqref{eq:upsilon-as-minimum} is independent of $k$, provided $k$ is large enough that the set being minimized over is nonempty); this follows from the relation
\begin{equation}\label{eq:V_SC-shift}
V_{\text{SC}(t,s-1)} = V_{\text{SC}(t,s)} + 1 \text{ when } s \le \bar s,
\end{equation}
which in turn follows from \eqref{eq:increment-s} (restricted to $s \le \bar s$ so that we can replace $C^\infty_0$ with $C_0$). In particular, \eqref{eq:V_SC-shift} gives that increasing $k$ by 1 decreases the minimum $s$ for which $V_{\text{SC}(t,s)}(C) \le k$ by 1, preserving the minimum of $s+k$. From this we see that minimum in \eqref{eq:upsilon-as-minimum} we could just as well allow $k$ to range over all integers. Then, by first minimizing over all $k$ values for each fixed $s$ we see that
\begin{equation}\label{eq:upsilon-as-minimum2}
\Upsilon_C(t) = -2 \min\left\{ s + V_{\text{SC}(t,s)}(C) | s \in \frac{1}{2n} \Z, s \le \bar s \right\}.
\end{equation}
Finally, note that by \eqref{eq:V_SC-shift} it suffices to consider one representative of each congruence class mod 1 of $s$. This leads to the following characterization of $\Upsilon_C(t)$ in terms of generalized $V_s$ invariants.
\begin{proposition}
Let $C$ be a knot-like complex over $\sR$ and let $g(C)$ denote the maximum Alexander grading of a generator of $C$. For $t = \frac m n$, let $s_0$ be any integer less than $-\frac t 2 g(C) - 1$ and let $s_i = s_0 + \frac{i}{2n}$ for $0 < i < 2n$. Then
$$\Upsilon_C(t) = -2 \min\{ s_i + V_{\text{SC}(t,s_i)}(C) \}_{i=0}^{2n-1}.$$
Moreover, we may restrict to even values of $i$ if $m$ is even.
\end{proposition}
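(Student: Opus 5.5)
The proof starts from equation \eqref{eq:upsilon-as-minimum2}, which expresses $\Upsilon_C(t)$ as $-2$ times the minimum of $s + V_{\text{SC}(t,s)}(C)$ over $s \in \frac{1}{2n}\Z$ with $s \le \bar s$. Two things remain. First, I will show that the threshold $\bar s$ can be taken to be $s_0 + 1$, i.e.\ any $s < -\frac t2 g(C)$ works. Second, I will use the shift relation \eqref{eq:V_SC-shift} to collapse the minimum over all admissible $s$ to one representative in each residue class of $\frac{1}{2n}\Z$ modulo $1$.

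For the threshold, recall that $\bar s$ was chosen so that every element of $(C^\infty_0,\sF_t)_s$ with $s \le \bar s$ has nonpositive $Alg$ and $Alex$ filtration levels; then $(C^\infty_0,\sF_t)_s$ agrees with $(C_0,\sF_t)_s$, and \eqref{eq:increment-s} restricts to $C_0$. I will write a general element of $C^\infty_0$ as $W^aZ^bx$ with $b - a = -A(x)$. Its filtration level is $\sF_t = -(1-\frac t2)a - \frac t2 b = -a - \frac t2(b-a) = -a + \frac t2 A(x)$. Since $A(x) \le g(C)$, the condition $\sF_t \le s$ gives $a \ge \frac t2 A(x) - s$. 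By symmetry of the roles, the same condition rewritten as $-b - (1-\frac t2)A(x) \le s$ gives $b \ge -s - (1-\frac t2)A(x)$.

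The lower bound on $a$ is positive as soon as $s < -\frac t2 g(C)$, using $A(x) \ge -g(C)$ in the worst case. The lower bound on $b$ needs care when $A(x)$ is very negative. Here I expect to need $|A(x)| \le g(C)$, which is not assumed for a general knot-like complex. I will handle this either by interpreting $g(C)$ as the maximum of $|A|$, or by noting that $b = a - A(x)$, so $b \ge a$ whenever $A(x) \le 0$, and when $A(x) > 0$ we have $b \ge a - g(C)$. Checking this bookkeeping so that the stated bound $s_0 < -\frac t2 g(C) - 1$ suffices is the main obstacle. The extra $-1$ should supply exactly the slack needed, since every $s_i$ then satisfies $s_i < s_0 + 1 < -\frac t2 g(C)$, and all values $s_i - j$ for $j \ge 0$ lie below the threshold as well.

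Given this, \eqref{eq:V_SC-shift} gives $s - 1 + V_{\text{SC}(t,s-1)} = s + V_{\text{SC}(t,s)}$ for every $s \le \bar s$. So the quantity being minimized in \eqref{eq:upsilon-as-minimum2} is constant on each residue class mod $1$ among admissible $s$. Every class of $\frac{1}{2n}\Z$ modulo $1$ has exactly one representative in $\{s_0, s_1, \ldots, s_{2n-1}\}$. The minimum over all admissible $s$ therefore equals the minimum over the $s_i$, which yields the displayed formula.

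For the final claim: when $m$ is even, $\sF_t$ takes values in $\frac1n\Z$. Hence $(C_0,\sF_t)_{s}$ equals $(C_0,\sF_t)_{s'}$, where $s'$ is the largest element of $\frac1n\Z$ that is at most $s$. Replacing $s$ by $s'$ then does not change $V_{\text{SC}(t,s)}$ and does not increase $s$, so the minimum is attained at some $s_i$ with $i$ even.
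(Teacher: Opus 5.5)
Your outline is the paper's: start from \eqref{eq:upsilon-as-minimum2}, justify the threshold, use \eqref{eq:V_SC-shift} to reduce to one representative per class mod $1$, and round down to $\frac{1}{n}\Z$ when $m$ is even. The last two steps are fine. The gap is in the threshold step. You set out to show that for $s<-\frac t2 g(C)$ every element $W^aZ^bx$ of $(C^\infty_0,\sF_t)_s$ has both $a\ge 0$ and $b\ge 0$, i.e.\ lies in $C_0$. The $b$-bound is not delicate ``when $A(x)$ is very negative''. Your own inequality $b\ge -s-(1-\frac t2)A(x)$ is weakest at $A(x)=g(C)$, and there $s<-\frac t2 g(C)$ only gives $b>-(1-t)g(C)$, which is negative once $t<1$. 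The extra $-1$ does not absorb this, because the containment is simply false. For example, take $t=\frac15$ and a generator $x$ with $A(x)=5$ (as for $T_{2,11}$, where $g=5$ and one may take $s_0=-2$). Then $W^3Z^{-2}x$ has Alexander grading $0$ and $\sF_t=\frac{1}{10}\cdot 2+\frac{9}{10}\cdot(-3)=-\frac52\le s_0$, yet $b=-2<0$.

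The missing observation is that $b\ge 0$ is never needed. The complexes $C_{\vec s}$ are subcomplexes of $C^-_0$, where negative powers of $Z$ are allowed; the regions $R_{\text{SC}(t,s)}$ are unbounded above, as in $C_{(-6,-5,-3,-2,0,1,3,4,\ldots)}$. The target of $v_{\vec s}$ is also $C^-_0$. The discussion before the proposition mentions $Alex$ as well, but the explicit threshold in the proof only controls $Alg$, and that is all that is used. So it suffices that $(C^\infty_0,\sF_t)_s\subset C^-_0$, i.e.\ $a\ge 0$. You also need that $H_{-2k}(C^-_0)\to H_{-2k}(C^\infty_0)$ is an isomorphism for $k\ge 0$. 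This holds since $H_*(C^-_0)\cong\F[\sU]$ is generated in degree $0$ and maps injectively to $\F[\sU,\sU^{-1}]$. With this, $(C^\infty_0,\sF_t)_s=C_{\text{SC}(t,s)}$ for all $s$ below the threshold. Then \eqref{eq:increment-s} gives $C_{\text{SC}(t,s-1)}=\sU\cdot C_{\text{SC}(t,s)}$ inside $C^-_0$, and \eqref{eq:V_SC-shift} follows. The condition $a\ge 0$ is exactly the paper's criterion that the intercept $\frac{2s}{t}$ lies at or below the lowest generator on the vertical axis, and your $a$-bound already proves it. Your concern about non-symmetric complexes is legitimate, but it belongs to this $a$-bound through the assumption $A(x)\ge -g(C)$. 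The paper likewise places the lowest generator at height $-g(C)$, so for a general knot-like complex $g(C)$ should be read as $\max_x|A(x)|$, as you suggest.
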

\begin{proof}
The proof is mostly contained in the discussion preceding the statement. We can quantify the notion of sufficiently low filtration level by noting that the key property we need is \eqref{eq:V_SC-shift}, and this holds as long as the $y$-intercept of the line defining the subcomplex $C_{\text{SC}(t,\bar s)}$ is at or below the lowest generator on the vertical axis, which occurs at height $-g(C)$; the intercept of that line is $\frac 2 t \bar s$, so this occurs for $\bar s = -\frac t 2 g(C)$. We pick $s_0 < \bar s - 1$ to ensure that each $s_i$ is less than $\bar s$. There is one $s_i$ in each equivalence class mod 1, so by \eqref{eq:V_SC-shift} we can restrict to these values of $s_i$ when taking the minimum in \eqref{eq:upsilon-as-minimum2}. The last statement holds because when $m$ is even the filtration function takes values in $\frac 1 n \Z$.
\end{proof}

Sometimes it is not even necessary to compute $V_{\text{SC}(t,s_i)}$ for all values of $i$. Since $V_SC(t,s)$ is a nonincreasing integer valued function satisfying \eqref{eq:V_SC-shift}, the minimum occurs at the unique choice of $0\le i < 2n$ for which $V_{\text{SC}(t,s_i)} \neq V_{\text{SC}(t,s_{i-1})}$.

As an example, when $K$ is the $(2,-1)$-cable of the left-hand trefoil and $t = \frac 2 3$, we can let $s_0 = -2$. Since $m = 2$ is even, we can consider only $s_0 = -2$, $s_2 = s_0 + \frac{2}{2n} = -\frac{5}{3}$, and $s_4 = s_0 + \frac{4}{2n} = -\frac{4}{3}$. The subcomplex $C_{\text{SC}(t,s)}$ lies under lines of slope $1-\frac 2 t = -2$ with intercept $3s$; these lines are shown in Figure \ref{fig:upsilon-example3} for $s_0$, $s_2$ and $s_4$. In each case the complex can be realized as the Floer complex of $(\Gamma,\bchain)$ with a nearly vertical curve weaving through the vertical marked points as shown. In each case one can check that the tower in homology is generated by the sum of the indicated intersection points, and so $V_{\text{SC}(t,s)}$ is the number of marked points covered by the shaded region. We then have that
$$\Upsilon_K\left(\frac 2 3\right ) = -2 \min\left \{ -2 + 2, -\frac 5 3 + 1, -\frac 4 3 + 1 \right\} = -2\left(-\frac{2}{3}\right) = \frac 4 3.$$

\begin{figure}
\includegraphics[scale = .65]{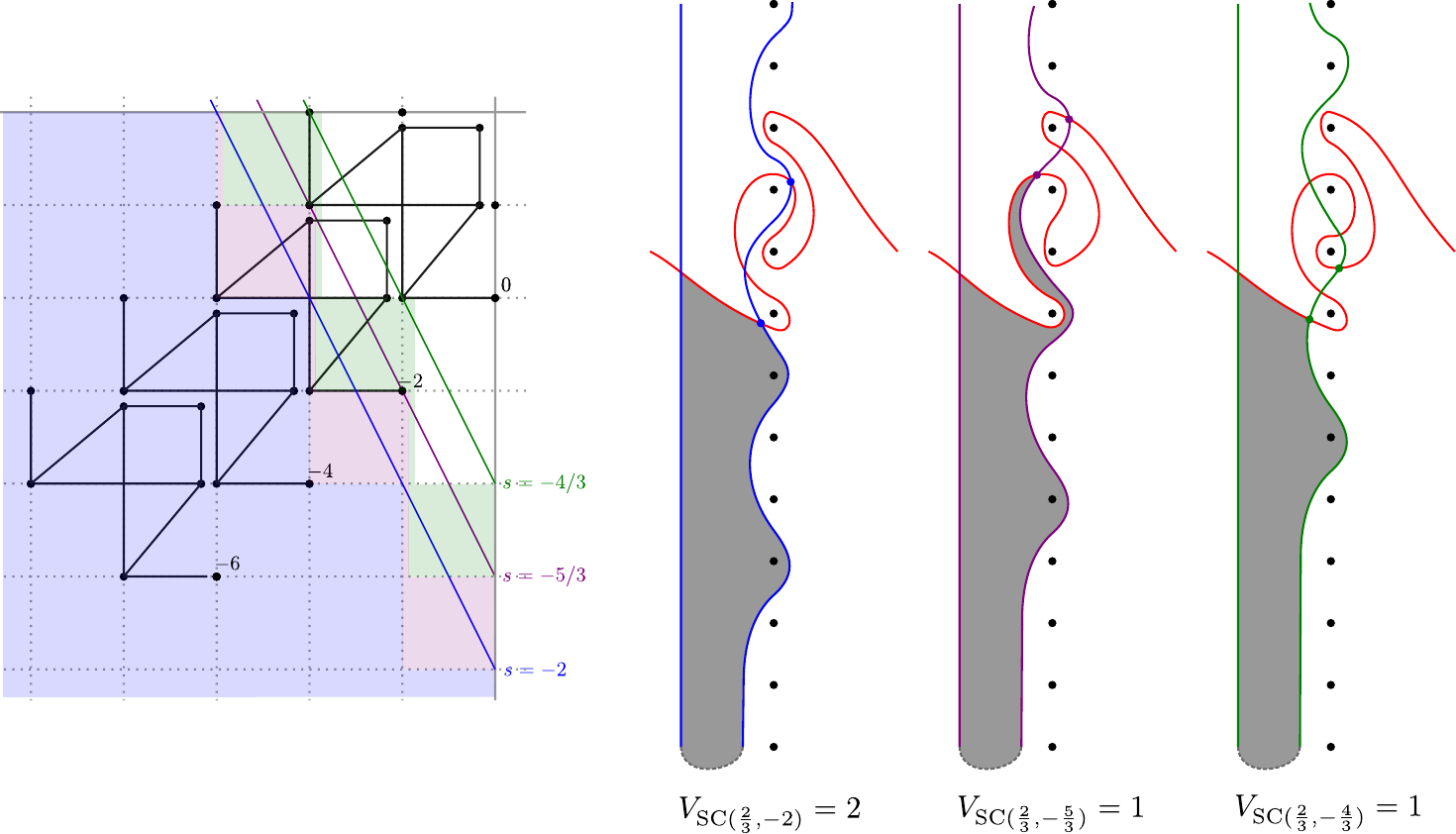}
\caption{The subcomplexes $C_{\text{SC}(t,s)}$ of $C = \CFKr(C_{2,-1}(LHT))$ corresponding to $t=\frac 2 3$ and $s = -2$, $s=-5/3$ or $s=-4/3$ are portions of $C$ below the indicated lines (equivalently in the corresponding shaded regions). In each case the curve diagram on the right can be used to compute $V_{\text{SC}(t,s)}$. Minimizing $s + V_{\text{SC}(t,s)}$ over these three values of $s$, then multiplying by $-2$, gives $\Upsilon_K(\frac 2 3) = \frac 4 3$.}
\label{fig:upsilon-example3}
\end{figure}

It is somewhat unsatisfying that we must compute multiple generalized $V_s$ invariants and take the minimum to compute $\Upsilon_C(t)$. This can be avoided if we replace the complex $C$ with the complex $C'$ over $\Lambda = \F[W^{\frac{1}{2n}}, Z^{\frac{1}{2n}} ]$ obtained by tensoring with $\Lambda$. The gradings $\gr_w$ and $\gr_z$ extend to this complex in an obvious way and take values in $\frac{1}{2n} \Z$ rather than $\Z$. As usual we restrict to the Alexander grading 0 summand, $C'_0$. We can define generalized $V_s$ invariants for this complex analogously to Definition \ref{def:generalized-Vs}, though since tensoring with $\Lambda$ introduces a finer grid to the $WZ$-plane we allow staircases to have corners at values of $\frac{1}{2n} \Z$. That is, a (possibly infinite) increasing sequence $\vec s$ of elements of $\frac{1}{2n} \Z$ defines a subcomplex $C'_{\vec s}$ of $(C')^-_0$ with an inclusion map $v_{\vec s}$, and the invariant $V_{\vec s}(C')$ is the rank of the cokernel of the induced map $v_{\vec s, *}$. The homology of the subcomplex contains a tower $\F[ \sU^{\frac{1}{2n}} ]$, and on this tower $v_{\vec s, *}$ is multiplication by $\sU^{{V_{\vec s}}/{2n}}$.

Given $0 \le t = \frac m n \le 2$ the filtration $\sF_t$ on $C_0$ extends to a filtration on $C'_0$; since every element of $C'_0$ is a power of $\sU^{\frac{1}{2n}}$ times an element of $C_0$ and multiplication by $\sU$ lowers $\sF_t$ by 1, the function defining $\sF_t$ still has values in $\frac{1}{2n} \Z$. For each $s$, let $\text{SC}_n(t,s)$ denote the sequence of elements of $\frac{1}{2n} \Z$ defining the staircase such that $(C'_0, \sF_t)_s = C'_{ \text{SC}_n(t,s) }$; this subcomplex still consists of every element on or below the line of slope $(1-\frac 2 t)$ and intercept $\frac{2s}{t}$ in the $WZ$-plane.  The advantage of tensoring with $\Lambda$ is that all sufficiently low filtration levels are related by multiplication by a power of $\sU^{\frac{1}{2n}}$, and in particular for sufficiently low $s$ we have $V_{\text{SC}_n(t,s-\frac{1}{2n})}(C') = V_{\text{SC}_n(t,s)}(C') + 1$. It follows that $s + \frac{1}{2n} V_{\text{SC}_n(t,s)}(C')$ is constant for all sufficiently low $s$, and similar reasoning to the previous discussion shows that
$$\Upsilon_C(t) = -2 \left( s + \frac{1}{2n} V_{\text{SC}_n(t,s)}(C') \right)$$
for any sufficiently low $s$ (as before, any $s < -\frac t 2 g(C) - 1$ will work).

The complex $C'$ can be represented by the same immersed multicurve $(\Gamma, \bchain)$ as the complex $C$ if we replace each marked point $u$ in $\barT_u$ with $2n$ marked points spaced out vertically along $\mu$ and interpret each marked point as having $\frac{1}{2n}$ times the usual contribution to grading calculations. The complex $C'_{\text{SC}_n(t, s)}$ arises from pairing $(\Gamma, \bchain)$ with a curve $\alpha_{\text{SC}_n(t, s)}$ crossing $\mu$ once at each height in the sequence ${\text{SC}_n(t, s)}$. Assuming we can identify an intersection point that appears with no power of $\sU^{\frac{1}{2n}}$ in a generator of the tower, we can compute $V_{\text{SC}_n(t,s)}(C')$ by counting marked points in an appropriate bigon starting at this intersection point. Figure \ref{fig:upsilon-example4} applies this construction to the example from Figure \ref{fig:upsilon-example3}, where $C = \CFKr(C_{2,-1}(LHT))$, $t = \frac{2}{3}$ and $s = -2$. The intersection point of $\alpha_{\text{SC}_n(t, s)}$ and $(\Gamma, \bchain)$ indicated in the middle diagram corresponds to a generator of the tower, and $V_{\text{SC}_n(t,s)}(C') = 8$ since the shaded bigon covers 8 marked points. It follows that $\Upsilon_C(\frac 2 3) = -2\left( -2 + \frac{8}{2\cdot 3} \right) = \frac 4 3$.

\begin{figure}
\includegraphics[scale = .75]{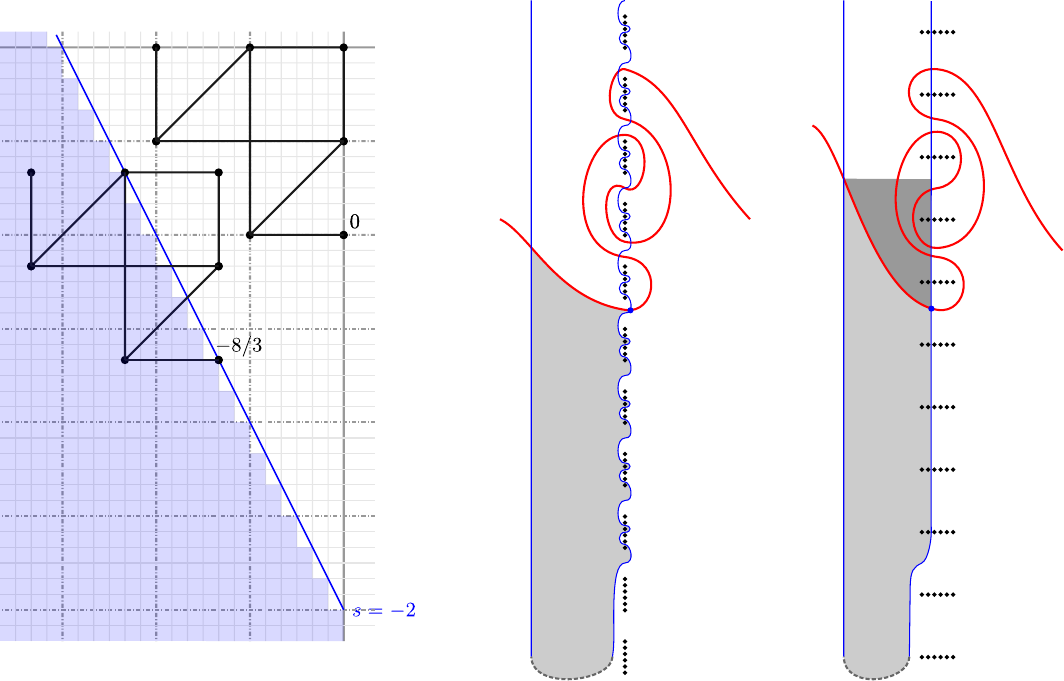}
\caption{The subcomplex $C'_{\text{SC}_n(t,s)}$, when $C = \CFKr(C_{2,-1}(LHT))$, $t=\frac 2 3$ and $s = -2$, is defined by the shaded staircase region. The highest summand containing a generator of the tower in homology has homology supported in grading $-\frac 8 3$ and is obtained from the summand of $C_0^\infty$ whose homology is supported in grading 0 by multiplying by $\sU^{\frac 8 6}$, thus $V_{\text{SC}_n(t,s)} = 8$. This is also the count of marked points contained in the shaded bigon in the middle picture. From this we see that $\Upsilon_C(\frac 2 3) = \frac 4 2$. Rearranging marked points as on the right relates this interpretation of $\Upsilon_K(t)$ to the one given earlier.}
\label{fig:upsilon-example4}
\end{figure}

We conclude by using pictures to explain (informally) why the two definitions of $\Upsilon_C$ appearing in \cite{OSS:upsilon} and \cite{Livingston:upsilon} agree. It is convenient to choose $s$ to be a multiple of $m$, so that the first intersection of $\alpha_{\text{SC}_n(t, s)}$ occurs at integer height $\frac{2s}{t} = \frac{2ns}{m}$. Because the sequence $\text{SC}_n(t, s)$ corresponds to a staircase with slope $(1 - \frac 2 t) = \frac{m-2n}{m}$, one can check that above height $\frac{2ns}{m}$ the curve $\alpha_{\text{SC}_n(t, s)}$ crosses $\mu$ in a pattern that repeats every $2n$ marked points (that is, with vertical period 1) and has exactly $m$ out of every $2n$ marked points on its left. Up to isotopy each group of $2n$ marked points between adjacent integer heights (which came from one instance of $u$ in $\barT_u$) may be arranged horizontally instead of vertically as on the right of Figure \ref{fig:upsilon-example4}. Once we have identified a suitable intersection point corresponding to a generator of the tower in $C'_{\text{SC}_n(t, s)}$, we can shade a bigon starting at that intersection point and also the triangular domain between $\Gamma$, $\mu$ and the line $y=0$, as on the right of Figure \ref{fig:upsilon-example4}. Using the last curve-based description of $\Upsilon_C$ above, which follows the definition in \cite{Livingston:upsilon}, we have that
\begin{align*}
\Upsilon_C(t) &= -2\left(s + \frac{1}{2n} (\# \text{ marked points in the lightly shaded bigon}) \right) \\
&= -\frac 1 n \left( 2ns + (\# \text{ marked points in the lightly shaded bigon}) \right).
\end{align*}
The combined number of marked points in both shaded regions is $-\frac{2ns}{m} \cdot m = -2ns$, which implies that
$$\Upsilon_C(t) = \frac{1}{n} (\# \text{ marked points in the darkly shaded bigon}).$$
This agrees with the first immersed curve description of $\Upsilon_C$ above, which follows the definition in \cite{OSS:upsilon}.

\bibliographystyle{alpha}
\bibliography{bibliography-concordance}

\end{document}